\definecolor{mahogany}{cmyk}{0, 0.77, 0.87, 0}
\definecolor{salmon}{cmyk}{0, 0.53, 0.38, 0}
\definecolor{melon}{cmyk}{0, 0.46, 0.50, 0}
\definecolor{yellowgreen}{cmyk}{0.44, 0, 0.74, 0}
\definecolor{brickred}{cmyk}{0, 0.89, 0.94, 0.28}
\definecolor{OliveGreen}{cmyk}{0.64, 0, 0.95, 0.40}
\definecolor{RawSienna}{cmyk}{0, 0.72, 1.0, 0.45}
\definecolor{ZurichRed}{rgb}{1, 0, 0} % Red of svgnames
\definecolor{rb}{rgb}{0.1,0.2, 0.7}
\begin{document}

%\newtheorem{thm}{Theorem}
%\numberwithin{thm}{Theorem}
\newtheorem{lemma}[thm]{Lemma}
%\newtheorem{corr}[thm]{Corollary}[sections]
%\numberwithin{corollary}[thm]{section}
\newtheorem{proposition}{Proposition}
\newtheorem{theorem}{Theorem}[section]
\newtheorem{deff}[thm]{Definition}
\newtheorem{case}[thm]{Case}
%\numberwithin{deff}{section}
\newtheorem{prop}[thm]{Proposition}
%\numberwithin{equation}{subsection}
%\numberwithin{equation}{section}
\newtheorem{example}{Example}

\newtheorem{corollary}{Corollary}

\theoremstyle{definition}
\newtheorem{remark}{Remark}

\numberwithin{equation}{section}
\numberwithin{definition}{section}
%\numberwithin{problem}{section}
\numberwithin{corollary}{section}
%\numberwithin{proposition}{subsection}

\numberwithin{theorem}{section}

\numberwithin{remark}{section}
\numberwithin{example}{section}
\numberwithin{proposition}{section}

\newcommand{\gap}{\lambda_{2,D}^V-\lambda_{1,D}^V}
\newcommand{\gapR}{\lambda_{2,R}-\lambda_{1,R}}
\newcommand{\bD}{\mathrm{I\! D\!}}
\newcommand{\calD}{\mathcal{D}}
\newcommand{\calA}{\mathcal{A}}

\newcommand{\conjugate}[1]{\overline{#1}}
\newcommand{\abs}[1]{\left| #1 \right|}
\newcommand{\cl}[1]{\overline{#1}}
\newcommand{\expr}[1]{\left( #1 \right)}
\newcommand{\set}[1]{\left\{ #1 \right\}}

\newcommand{\calC}{\mathcal{C}}
\newcommand{\calE}{\mathcal{E}}
\newcommand{\calF}{\mathcal{F}}
\newcommand{\Rd}{\mathbb{R}^d}
\newcommand{\BR}{\mathcal{B}(\Rd)}
\newcommand{\R}{\mathbb{R}}
\newcommand{\T}{\mathbb{T}}
\newcommand{\D}{\mathbb{D}}

\newcommand{\al}{\alpha}
\newcommand{\RR}[1]{\mathbb{#1}}
\newcommand{\bR}{\mathrm{I\! R\!}}
\newcommand{\ga}{\gamma}
\newcommand{\om}{\omega}
\newcommand{\A}{\mathbb{A}}
\newcommand{\bH}{\mathbb{H}}

\newcommand{\bb}[1]{\mathbb{#1}}
\newcommand{\bI}{\bb{I}}
\newcommand{\bN}{\bb{N}}

\newcommand{\uS}{\mathbb{S}}
\newcommand{\M}{{\mathcal{M}}}
\newcommand{\calB}{{\mathcal{B}}}

\newcommand{\W}{{\mathcal{W}}}

\newcommand{\m}{{\mathcal{m}}}

\newcommand {\mac}[1] { \mathbb{#1} }

\newcommand{\bC}{\Bbb C}

\newtheorem{rem}[theorem]{Remark}
\newtheorem{dfn}[theorem]{Definition}
\theoremstyle{definition}
\newtheorem{ex}[theorem]{Example}
\numberwithin{equation}{section}

\newcommand{\Pro}{\mathbb{P}}
\newcommand\F{\mathcal{F}}
\newcommand\E{\mathbb{E}}
\newcommand\e{\varepsilon}
\def\H{\mathcal{H}}
\def\t{\tau}

\title[Burkholder's function]{Burkholder's function\\ and a weighted $L^2$ bound for stochastic integrals}

\author{Rodrigo Ba\~nuelos}\thanks{R. Ba\~nuelos is supported in part  by NSF Grant
  \#1403417-DMS}
\address{Department of Mathematics, Purdue University, West Lafayette, IN 47907, USA}
\email{banuelos@math.purdue.edu}
\author{Micha\l { }Brzozowski}\thanks{M. Brzozowski is supported in part by the NCN grant DEC-2014/14/E/ST1/00532.}
\address{Department of Mathematics, Informatics and Mechanics, University of Warsaw, Banacha 2, 02-097 Warsaw, Poland}
\email{M.Brzozowski@mimuw.edu.pl}
\author{Adam Os\c ekowski}\thanks{A. Os\c ekowski is supported in part by the NCN grant DEC-2014/14/E/ST1/00532.}
\address{Department of Mathematics, Informatics and Mechanics, University of Warsaw, Banacha 2, 02-097 Warsaw, Poland}
\email{A.Osekowski@mimuw.edu.pl}

\subjclass[2010]{Primary: 60G42. Secondary: 60G44.}

\keywords{martingale, martingale transform, Bellman function, differential subordination}

\begin{abstract}
Let $X$ be a continuous-path martingale and let $Y$ be a stochastic integral, with respect to $X$, of some predictable process with values in $[-1,1]$. We provide an explicit formula for Burkholder's function associated with the weighted $L^2$ bound
$$ \|Y\|_{L^2(W)}\lesssim [w]_{A_2}\|X\|_{L^2(W)}.$$
\end{abstract}

\maketitle

\section{Introduction}
Suppose that $(\Omega,\F,\mathbb{P})$ is a complete probability space equipped with a right-continuous filtration $(\F_t)_{t\geq 0}$, a nondecreasing family of sub-$\sigma$-algebras of $\F$. Throughout the paper, we will assume that all adapted martingales have continuous paths; for example, this is the case if $(\F_t)_{t\geq 0}$ is a Brownian filtration. Let $X$ be an adapted martingale and let $X^*=\sup_{t\geq 0}X_t$, $|X|^*=\sup_{t\geq 0}|X_t|$ denote the associated one- and two-sided maximal functions. In what follows, $\langle X\rangle$ will stand for the corresponding (skew) square bracket: see Dellacherie and Meyer \cite{DM} for the definition and basic properties of this object. Next, suppose that $Y$ is the stochastic integral, with respect to $X$, of some predictable process $H$ which takes values in $[-1,1]$:
$$ Y_t=H_0X_0+\int_{0+}^t H_s\mbox{d}X_s.$$
The question about the comparison of the sizes of $X$ and $Y$ has gathered a lot of interest in the literature. See e.g. \cite{Bu1,Bu2,Bu3,Bu4} and consult the monograph \cite{O1}. In addition, such stochastic inequalities have found numerous applications in harmonic analysis, where they can be used, among other things, in the study of $L^p$ boundedness of wide classes of Fourier multipliers \cite{BB,BO11,BO12}. We have the following celebrated result proved in \cite{Bu2}.

\begin{theorem}
If $X$, $Y$ are as above, then for each $1<p<\infty$ we have
\begin{equation}\label{LpB}
 \|Y\|_{L^p}\leq (p^*-1)\|X\|_{L^p},
\end{equation}
where $p^*=\max\{p,p/(p-1)\}$. For each $p$, the constant is the best possible.
\end{theorem}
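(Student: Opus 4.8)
The plan is to use Burkholder's method of special (Bellman) functions. The first and decisive step is to produce a function $U_p\colon\R^2\to\R$ with three properties: (i) the pointwise majorization $U_p(x,y)\ge|y|^p-(p^*-1)^p|x|^p$; (ii) the initial estimate $U_p(x,y)\le0$ whenever $|y|\le|x|$; (iii) a concavity condition adapted to the transform structure, namely for all $x,y$ and all $h\in[-1,1]$,
\begin{equation*}
\partial_{xx}U_p(x,y)+2h\,\partial_{xy}U_p(x,y)+h^2\,\partial_{yy}U_p(x,y)\le0.
\end{equation*}
When $p\ge2$ (so that $p^*=p$) one takes the explicit, positively $p$-homogeneous function
\begin{equation*}
U_p(x,y)=p\Big(1-\tfrac1p\Big)^{p-1}\big(|y|-(p-1)|x|\big)\big(|x|+|y|\big)^{p-1},
\end{equation*}
and for $1<p<2$ there is an analogous explicit choice, also due to Burkholder \cite{Bu2}. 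Properties (i) and (ii) are elementary: (ii) holds because $|y|-(p-1)|x|\le(2-p)|x|\le0$ when $|y|\le|x|$, and (i) reduces, by homogeneity, to a one-variable estimate on the segment $|x|+|y|=1$. Property (iii) is where the precise form of $U_p$ — and of the constant $p^*-1$ — is forced; it is checked by a direct differentiation, carried out separately on the two regions cut out by the set $|y|=(p^*-1)|x|$.

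Granting such a $U_p$, the stochastic part runs as follows. Because $H$ takes values in $[-1,1]$, the brackets satisfy $\mathrm{d}\langle X,Y\rangle_s=H_s\,\mathrm{d}\langle X\rangle_s$ and $\mathrm{d}\langle Y\rangle_s=H_s^2\,\mathrm{d}\langle X\rangle_s$. Applying Itô's formula (in its Meyer--Itô/Itô--Tanaka form, so as to cover the points where $U_p$ is not twice differentiable) to the continuous semimartingale $s\mapsto U_p(X_s,Y_s)$ gives
\begin{equation*}
U_p(X_t,Y_t)=U_p(X_0,Y_0)+N_t+\tfrac12\int_{0+}^t\big(\partial_{xx}U_p+2H_s\,\partial_{xy}U_p+H_s^2\,\partial_{yy}U_p\big)(X_s,Y_s)\,\mathrm{d}\langle X\rangle_s+R_t,
\end{equation*}
where $N$ is a local martingale with $N_0=0$ and $R$ collects the local-time terms coming from the corners of $U_p$ (which lie on the coordinate axes). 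Property (iii) makes the integral term nonincreasing in $t$, and the corners of $U_p$ are concave in the relevant direction, so $R$ is nonincreasing as well. Taking a localizing sequence $\tau_n\uparrow\infty$ (chosen also to keep $X,Y$ bounded) we get $\E\,U_p(X_{t\wedge\tau_n},Y_{t\wedge\tau_n})\le\E\,U_p(X_0,Y_0)$. Since $Y_0=H_0X_0$ with $|H_0|\le1$ we have $|Y_0|\le|X_0|$, so (ii) gives $\E\,U_p(X_0,Y_0)\le0$. Combining this with (i) and the submartingale property of $|X|^p$,
\begin{equation*}
\E\,|Y_{t\wedge\tau_n}|^p-(p^*-1)^p\,\E\,|X_t|^p\le\E\,U_p(X_{t\wedge\tau_n},Y_{t\wedge\tau_n})\le0.
\end{equation*}
Letting $n\to\infty$ by Fatou's lemma, then $t\to\infty$, and using $\|Y\|_{L^p}=\sup_t\|Y_t\|_{L^p}$ (the asserted inequality being vacuous when $\|X\|_{L^p}=\infty$), we arrive at \eqref{LpB}.

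I expect the main obstacle to be the construction and verification of $U_p$, and especially the checking of (iii), where the sharp function must be located; this is a classical but delicate computation, and for $1<p<2$ it requires a genuinely different formula from the $p\ge2$ case. A secondary technical point is justifying Itô's formula in spite of the failure of $C^2$ smoothness of $U_p$; since the non-smooth set has Lebesgue measure zero and the defect term $R$ has the favorable sign, this is handled either by the Meyer--Itô formula, or by mollifying $U_p$ and checking that (i)--(iii) degrade only by an error tending to $0$, or by invoking the ready-made continuous-time formulation of Burkholder's method for differentially subordinate martingales (see the monograph \cite{O1}).

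Finally, the sharpness of $p^*-1$. For each $\varepsilon>0$ one must exhibit a pair $(X,Y)$ as in the theorem with $\|Y\|_{L^p}\ge(p^*-1-\varepsilon)\|X\|_{L^p}$. The continuous-path framework contains the discrete-time martingale transforms for which Burkholder established optimality in \cite{Bu2} — for instance by embedding a discrete filtration into a Brownian one and realizing the transform as a stochastic integral of a $\{-1,1\}$-valued predictable step process — so the optimality is inherited; alternatively, a near-extremal process can be read directly off $U_p$ by the standard Bellman-function recipe, checking along the way that the transforming process indeed stays in $[-1,1]$.
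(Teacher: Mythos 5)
Your proposal is correct and follows exactly the route the paper describes for this result: Burkholder's Bellman-function method, with the explicit special function $\alpha_p(|y|-(p^*-1)|x|)(|x|+|y|)^{p-1}$ satisfying the initial, majorization, and directional-concavity conditions, applied via a (Meyer--)It\^o formula and a localization/Fatou passage, with sharpness inherited from the discrete-time extremizers. The only slight inaccuracy is your remark that the range $1<p<2$ ``requires a genuinely different formula'': as the paper points out, the same expression works uniformly once one writes it with $p^*-1$ (so that $p^*-1=1/(p-1)$ when $p<2$) and the appropriate constant $\alpha_p=p(1-1/p^*)^{p-1}$; only the coefficient changes, not the form.
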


There is a powerful method, invented by Burkholder, which allows the efficient study of general class of inequalities for martingales and their stochastic integrals. Roughly speaking, the approach enables to deduce the desired estimate from the existence of a certain special function enjoying appropriate concavity and size requirements. This method (also referred to as the Bellman function method) originates in the theory of optimal control, and has turned out to work also in much wider settings of harmonic analysis. See e.g. \cite{Be,NT,NTV,O1}. 

For example, in order to prove the above sharp $L^p$ estimate, Burkholder showed that it is enough to find a continuous function $B:\R^2\to \R$ satisfying
\begin{itemize}
\item[1$^\circ$] $B(x,y)\leq 0$ if $|y|\leq |x|$;
\item[2$^\circ$] $B(x,y)\geq |y|^p-(p^*-1)^p|x|^p$
\end{itemize}
and the following concavity type condition:
\begin{itemize}
\item[3$^\circ$] for any $x$, $y$ and any $h,\,k$ with $|k|\leq |h|$, the function $t\mapsto B(x+th,y+tk)$ is concave on $\R$.
\end{itemize}
See \cite{Bu1,Bu2,Bu4} or Chapter 4 in \cite{O1} for the relation of such a function to \eqref{LpB}, consult also Section 2 below. To complete the proof of the $L^p$ bound, Burkholder provides the explicit formula for $B$:
$$ B(x,y)=\alpha_p(|y|-(p^*-1)|x|)(|x|+|y|)^{p-1},$$
where $\alpha_p$ is a certain constant depending only on $p$. It turns out that this function can be applied in seemingly unrelated areas of mathematics. Namely, there is a deep and unexpected connection of $B$ with the geometric function theory, particularly with the theory of quasiconformal mappings, rank-one convex functionals and the properties of Beurling-Ahlfors operator: see \cite{AIPS0, AIPS,BMS,I1,I2} and consult the references therein. In other words, although the function $B$ originates in the probabilistic estimate \eqref{LpB}, its explicit formula is of independent interest and importance in contexts far and beyond martingale theory. 

The above observation was one of the motivations for our research. 
There is an interesting question concerning the explicit formula for a \emph{weighted} version of Burkholder's function $B$. Suppose that $W=(W_t)_{t\geq 0}$ is a weight, i.e., a nonnegative and uniformly integrable martingale. It is a usual convention to identify $W$ with its terminal variable $W_\infty$. For any $1\leq p<\infty$ and any weight $W$, we introduce the associated $L^p$ space as the class of all variables $f$ for which $ \|f\|_{L^p(W)}=\left(\int_\Omega |f|^pW\mbox{d}\mathbb{P}\right)^{1/p}<\infty.$ Given a martingale $X$ as above, we will also use the notation $\|X\|_{L^p(W)}=\sup_{t\geq 0}\|X_t\|_{L^p(W)}.$ 
Following Izumisawa and Kazamaki \cite{IK}, we say that $W$ is an $A_p$ weight (where $1<p<\infty$ is a fixed parameter), if the $A_p$ characteristic of $W$, given by the formula
$$ [W]_{A_p}=\sup_{t\geq 0} \bigg\|W_t\E(W^{-1/(p-1)}|\F_t)\bigg\|_\infty,$$
is finite. 
This is the probabilistic counterpart of the classical, analytic $A_p$ condition introduced by Muckenhoupt \cite{Mu} during the study of boundedness of the Hardy-Littlewood maximal operator on weighted spaces. 

With all the necessary definitions at hand, we can ask about the weighted analogue of \eqref{LpB}. Namely, for a given and fixed $1<p<\infty$ and a weight $W$, does there exist a constant $C_{p,W}$ such that we have
$$ \|Y\|_{L^p(W)}\leq C_{p,W}\|X\|_{L^p(W)}$$
for all martingales $X$, $Y$ such that $Y$ is the stochastic integral of $X$? It can be shown (cf. Domelevo and Petermichl \cite{DP}, Petermichl and Volberg \cite{PV}, Wittwer \cite{Wi}) that the answer is positive if and only if $W\in A_p$. Furthermore, one can show the following optimal factorization of the constant: we have $C_{p,W}\leq c_p[W]_{A_p}^{\max\{1,1/(p-1)\}}$, where $c_p$ depends only on $p$ and the exponent $\max\{1,1/(p-1)\}$ is the best possible. Such extraction of the optimal dependence of the constant on the weight characteristic has gained a lot of interest in the recent literature. For most classical operators in harmonic analysis such an extraction has been carried out successfully: see e.g. \cite{Buc,Hyt,L,Le1} and consult the references therein. 

Coming back to the context of martingale transforms, the above discussion shows that
\begin{equation}\label{L2D}
 \|Y\|_{L^p(W)}\leq c_p[W]_{A_p}^{\max\{1,1/(p-1)\}}\|X\|_{L^p(W)},\qquad 1<p<\infty.
\end{equation}
Straightforward extrapolation techniques (see e.g. \cite{D} or, in the above probabilistic context, \cite{BO2}) show that it is enough to study the above bound for the case $p=2$ only: 
\begin{equation}\label{L2DP}
 \|Y\|_{L^2(W)}\leq c_2[W]_{A_2}\|X\|_{L^2(W)}.
\end{equation}
This reduction was used in \cite{DP,PV,Wi} to establish the above weighted $L^p$ bound. We want to emphasize here that due to this fact, we only construct the Burkholder's function associated with the weighted $L^2$ estimate (as stated in the abstract) and not for the weighted $L^p$ estimate.  To show \eqref{L2DP}, a duality and a number of complicated Bellman functions (involving six variables) were applied. 
There is a natural question whether the $L^2$ bound \eqref{L2DP} can be established directly, in the spirit of Burkholder's approach described earlier. The presence of $A_2$ weights forces the introduction of two additional arguments and hence the problem reduces to the construction of an explicit function of \emph{four} variables, enjoying appropriate concavity and size conditions similar to 1$^\circ$-3$^\circ$ above (see Section 2 below for details). The main result of this paper is to give a positive answer to this question. Interestingly, as an immediate by-product, this special function will allow us to obtain a stronger, maximal estimate stated below.

\begin{theorem} Suppose that $W$ is an $A_{p}$ weight, $X$ is a martingale and $Y$ is a stochastic integral, with respect to $X$, of some predictable process $X$ taking values in $[-1,1]$. Then for any $1<p<\infty$ there is a finite constant $C_{p}$ depending only on $p$ such that
\begin{align}\label{MainTheorem}
\|\,|Y|^*\|_{L^p(W)}\leq C_p[W]_{A_p}^{\max\{1/(p-1),1\}}\|X\|_{L^p(W)}.
\end{align}
The exponent $\max\{1/(p-1),1\}$ is the best possible.
\end{theorem}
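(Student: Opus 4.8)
The plan is to derive the maximal bound \eqref{MainTheorem} from a suitable \emph{four-variable Burkholder-type function}, following the philosophy of conditions 1$^\circ$--3$^\circ$ but adapted to the weighted setting. First I would set up the correct underlying process: together with $X$ and $Y$ one tracks the weight $W$ and the auxiliary variable $V$ which is a version of $\E(W^{-1}\mid\F_t)$, so that the pair $(W_t,V_t)$ lives in the hyperbolic-type domain $\{(w,v):w,v>0,\ 1\le wv\le [W]_{A_2}\}$. The stochastic integral structure means $\mbox{d}Y=H\,\mbox{d}X$ with $|H|\le1$, so $\mbox{d}\langle Y\rangle\le \mbox{d}\langle X\rangle$ and $\mbox{d}\langle X,Y\rangle^2\le \mbox{d}\langle X\rangle\mbox{d}\langle Y\rangle$; this is the differential-subordination input that the concavity of the special function must absorb. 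The goal of Section~2 (which the excerpt promises) is to produce a function $U$ of the four variables $(x,y,w,v)$ which dominates (a maximal truncation of) $|y|^2w$, is controlled above by $C[W]_{A_2}^2\,x^2 w$ on the relevant set, and is concave along the admissible directions; applying It\^o's formula to $U(X_t,Y_t,W_t,V_t)$ and taking expectations then yields \eqref{L2DP} in the non-maximal form.

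The upgrade to the maximal estimate is the genuinely new point, and I would handle it by the standard device of adjoining the running maximum $Z_t=|Y|^*_t=\sup_{s\le t}|Y_s|$ as a fifth, nondecreasing parameter. One introduces $\tilde U(x,y,z,w,v)$ with $\tilde U(x,y,z,w,v)\ge z^2 w - C[W]_{A_2}^2 x^2 w$ when $|y|\le z$, with the boundary/monotonicity condition $\partial_z \tilde U(x,y,z,w,v)\le 0$ on $|y|=z$ so that the (singular, $\mbox{d}Z$-driven) part of the It\^o expansion has the right sign, together with the same concavity in $(x,y)$ along differentially subordinate directions and the correct concavity/linearity in $(w,v)$. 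The function produced for the non-maximal case typically already has the structure $x^2 w$ times a function of the ratios $y/x$, $wv$, and one can splice it with a term handling the maximal variable, exactly as Burkholder does in the unweighted maximal inequalities (compare Chapter~4 of \cite{O1}); verifying that the pasting preserves concavity on the hyperbolic strip is where the bookkeeping concentrates. Once $\tilde U$ is in hand, It\^o's formula applied to $\tilde U(X_t,Y_t,Z_t,W_t,V_t)$, the sign conditions, and a localization/limiting argument give $\|\,|Y|^*\|_{L^2(W)}\le C[W]_{A_2}\|X\|_{L^2(W)}$, and then the sharp weighted extrapolation theorem (\cite{D}, or \cite{BO2} in the probabilistic form) transfers this $p=2$ bound to all $1<p<\infty$ with the claimed exponent $\max\{1/(p-1),1\}$.

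Finally, the sharpness of the exponent $\max\{1/(p-1),1\}$ does not need the maximal function at all: since $|Y_t|\le |Y|^*_\infty$, the left side of \eqref{MainTheorem} already dominates $\|Y\|_{L^p(W)}$, so any lower bound for the best constant in the non-maximal weighted estimate \eqref{L2D} is automatically a lower bound here. Thus I would simply invoke the known extremal examples (the power-weight / dyadic-martingale constructions of \cite{DP,PV,Wi}) showing the exponent cannot be lowered, and note that the upper bound just proved matches it.

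I expect the main obstacle to be the concavity verification for the pasted function on the hyperbolic strip $1\le wv\le [W]_{A_2}$: one must check that the Hessian of $\tilde U$, restricted to the cone of admissible directions $(h,k,\ell,m)$ with $|k|\le|h|$ and the constraint linking the increments of $W$ and $V$, is nonpositive uniformly in $[W]_{A_2}$, and that the gluing along the free boundary $|y|=z$ (and along $wv=1$, $wv=[W]_{A_2}$) introduces no positive curvature. Extracting exactly the power $[W]_{A_2}^2$ in the size condition 2$^\circ$, rather than a worse power, is the quantitative heart of the argument and is precisely what forces the specific algebraic form of the special function.
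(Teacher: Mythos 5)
Your proposal follows the paper's route essentially step by step: reduce to $p=2$ by weighted extrapolation, track $(X,Y,W,V)$ with $(W,V)$ living in the hyperbolic strip $1\le wv\le [W]_{A_2}$, build a Burkholder/Bellman function of four variables for the non-maximal bound, adjoin the running maximum as a fifth variable with the sign condition on its partial derivative at the free boundary, and run It\^o. Two remarks on points where your sketch diverges from what the paper actually does. First, the paper does not splice in a ``new term'' for the maximal variable; it simply sets $\mathbb{B}(x,y,z,w,v)=B(x,y-z,w,v)$ where $B$ is the already-constructed four-variable function, exploits the symmetry $B_y(x,0,w,v)=0$ to get the needed boundary condition $\mathbb{B}_z(x,y,y,w,v)\le 0$, and works with the \emph{one-sided} maxima of $Y$ and $-Y$, recovering $|Y|^*$ at the end via $|Y|^*\le Y^*+(-Y)^*$ plus the non-maximal bound; your use of the two-sided max $Z=|Y|^*$ directly as the fifth coordinate is workable but makes the free-boundary condition two-sided ($y=\pm z$) and slightly heavier. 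Second, and more substantively, your stated majorization $\tilde U\ge z^2w - C[W]_{A_2}^2\,x^2 w$ has the wrong fourth-variable dependence: the It\^o argument controls $\E X_t^2 V_t^{-1}$, not $\E X_t^2 W_t$, and the identity $V^{-1}=W$ is available only at terminal time. The majorization must therefore read $\ge \kappa\big((y-z)^2 w - C^2 c^2 x^2 v^{-1}\big)$ (as in the paper's $G$); one then passes to the terminal variable either directly or via convexity of $(x,v)\mapsto x^2 v^{-1}$. With $x^2 w$ in place of $x^2 v^{-1}$ the argument would not close. Your sharpness observation (that the lower bound is inherited from the non-maximal estimate) is exactly right.
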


As we will see, the function $B$ we provide has quite a complicated structure (which should be compared to its trivial unweighted counterpart: $B(x,y)=y^2-x^2$). Of course, this increased difficulty is not surprising: in the light of the extrapolation method mentioned above, the weighted $L^2$ bound implies the validity of \eqref{L2D} and hence the corresponding Burkholder's function carries all the information about all $L^p$ estimates in the weighted context. We would like to finish the discussion with a terminological remark. Namely, the function $B$ constructed in this paper yields the constant $c_2$ in \eqref{L2DP} which is not optimal. Therefore, in the language used in the Bellman function theory, one could call $B$ a \emph{supersolution} corresponding to \eqref{L2DP}.

The remaining part of the paper is split into two sections. In Section 2 we explain the relation between Burkholder's function and the validity of \eqref{L2DP}. Section 3 contains the explicit construction of the special function and the proof of \eqref{MainTheorem}.

\section{Burkholder's method}

Let us start with the following useful interpretation of $A_p$ weights, valid for $1<p<\infty$. Fix such a weight $W$ and suppose that $c\geq [W]_{A_p}$. In particular, the finiteness of the $A_p$ characteristic implies the integrability of the function $W^{1/(1-p)}$ and we may consider the associated martingale $V=(V_t)_{t\geq 0}$ given by $V_t=\E(W^{1/(1-p)}|\F_t)$, $t\geq 0$. Note that Jensen's inequality implies $W_t V_t^{p-1}\geq 1$ almost surely for any $t\geq 0$ and, in addition, the $A_p$ condition is equivalent to the reverse bound
$$ W_t V_t^{p-1}\leq [W]_{A_p}\qquad \mbox{with probability }1.$$
In other words, an $A_p$ weight of characteristic equal to $c$ gives rise to a two-dimensional martingale $(W,V)$ taking values in the domain
$$ \mathcal{D}_c=\{(w,v)\in (0,\infty)\times (0,\infty)\,:\,1\leq wv^{p-1}\leq c\}.$$
Note that this martingale terminates at the lower boundary of this domain: $W_\infty V_\infty^{p-1}=1$ almost surely. Actually, the implication can be reversed.  Given a pair $(W,V)$ taking values in $\mathcal{D}_c$ and terminating at the set $wv^{p-1}=1$,  one easily checks that its first coordinate is an $A_p$ weight with $[W]_{A_p}\leq c$.

Let $c\geq 1$ be a fixed parameter. Suppose that $G:\R^2\times \mathcal{D}_c\to \R$ is a given Borel function and assume that we are interested in showing that 
\begin{equation}\label{wewant}
\E G(X_t,Y_t,W_t,V_t)\leq 0,\qquad t\geq 0.
\end{equation}
Here $(X,Y)$ is an arbitrary pair of martingales such that $Y$ is the stochastic integral, with respect to $X$, of some predictable process with values in $[-1,1]$, and $(W,V)$ is a pair associated with some $A_p$ weight of characteristic not bigger than $c$. A key to handle this problem is to consider a $C^2$ function $B:\R^2\times \mathcal{D}_{c} \to \mathbb{R}$ which satisfies the following properties:
\begin{itemize}
\item[1$^\circ$] (Initial condition) We have $B(x,y,{w},{v}) \leq 0$ if $|y|\leq|x|$ and $1\leq \mathtt{wv}^{p-1} \leq c$.
\item[2$^\circ$] (Majorization property) We have $B\geq G$ on $\R^2\times \mathcal{D}_c$.
\item[3$^\circ$] (Concavity-type property) For any $(x,y,{w},{v})\in \R^2\times \mathcal{D}_{c}$ and $d,e,r,s \in \mathbb{R}$ satisfying $|e|\leq |d|$, the function
\begin{equation*}
\xi_{B}(t) := B(x+td,y+te,{w}+tr,{v}+ts),
\end{equation*}
given for those $t$, for which $1 \leq ({w}+tr)({v}+ts) \leq c$, is locally concave.
\end{itemize}

The connection between the existence of such a function and the validity of \eqref{wewant} is described in the lemma below.

\begin{lemma}\label{MainLemma}
Let $1<p<\infty$ and $c\geq 1$ be fixed. If $B$ satisfies the conditions 1$^\circ$, 2$^\circ$ and 3$^\circ$, then the inequality \eqref{wewant} holds true for all $X,\,Y,\,W$ and $V$ as above.
\end{lemma}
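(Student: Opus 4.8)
The plan is to follow the standard Burkholder--Bellman scheme: fix $t\geq 0$ and show that the process $s\mapsto B(X_s,Y_s,W_s,V_s)$, $0\leq s\leq t$, is a supermartingale (after a suitable stopping to control integrability), so that
$$\E B(X_t,Y_t,W_t,V_t)\leq \E B(X_0,Y_0,W_0,V_0)\leq 0,$$
where the last inequality is the initial condition $1^\circ$ applied to the pair $(X_0,Y_0)$, which satisfies $|Y_0|=|H_0X_0|\leq|X_0|$, together with the fact that $(W_0,V_0)\in\mathcal D_c$. Then the majorization property $2^\circ$, i.e. $B\geq G$, upgrades this to $\E G(X_t,Y_t,W_t,V_t)\leq\E B(X_t,Y_t,W_t,V_t)\leq 0$, which is exactly \eqref{wewant}.

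The heart of the argument is the supermartingale property, which I would obtain from It\^o's formula. Since all martingales here have continuous paths, write $Y$ as the It\^o integral $\int H\,\mathrm dX$ with $|H|\leq 1$, and apply the multidimensional It\^o formula to the $C^2$ function $B$ along the four-dimensional continuous semimartingale $Z_s=(X_s,Y_s,W_s,V_s)$:
$$B(Z_t)=B(Z_0)+\int_{0+}^t \nabla B(Z_s)\cdot \mathrm dZ_s+\frac12\int_{0+}^t \big\langle D^2B(Z_s)\,\mathrm dZ_s,\mathrm dZ_s\big\rangle.$$
The first integral is a local martingale (each coordinate of $Z$ is), so after localizing by a sequence of stopping times $\tau_n\uparrow\infty$ it has zero expectation. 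It remains to check that the quadratic-variation term is nonpositive. Here one uses that $\mathrm d\langle X\rangle_s$, $\mathrm d\langle Y\rangle_s=H_s^2\,\mathrm d\langle X\rangle_s$ and $\mathrm d\langle X,Y\rangle_s=H_s\,\mathrm d\langle X\rangle_s$, so the $(x,y)$-block of the quadratic form, evaluated against the increment direction $(1,H_s)$, is controlled exactly by the concavity-type condition $3^\circ$ with $d=1$, $e=H_s$ (which satisfy $|e|\leq|d|$), while $r,s$ range over the directions dictated by $\mathrm d\langle X,W\rangle$, $\mathrm d\langle X,V\rangle$, $\mathrm d\langle W\rangle$, etc. Concavity of $t\mapsto\xi_B(t)$ means $\xi_B''\leq 0$, i.e. the relevant Hessian quadratic form is $\leq 0$ on precisely the cone of admissible directions, which forces the It\^o correction term to be a nonincreasing process. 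Hence $B(Z_{s\wedge\tau_n})$ is a supermartingale.

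The main obstacle is integrability and the passage to the limit in the localization. The function $B$ is only assumed $C^2$ on $\R^2\times\mathcal D_c$, with no global bound, and the pair $(W,V)$ may approach the boundary of $\mathcal D_c$ (indeed $W_\infty V_\infty^{p-1}=1$ on the lower boundary), so $\nabla B$ and $D^2B$ need not be bounded along the trajectory; thus one cannot blithely drop the stopping times. The remedy I would use is the standard one in this circle of ideas (cf. \cite{Bu1,Bu4,O1}): first establish \eqref{wewant} under additional boundedness assumptions --- $X$, $Y$ uniformly bounded and $(W,V)$ staying in a compact subset of the interior of $\mathcal D_c$ --- where dominated convergence applies directly; then remove these assumptions by a limiting/truncation procedure, approximating a general admissible configuration by bounded ones and invoking the continuity of $B$ together with Fatou's lemma (using $B\geq G$ and, if needed, a mild integrability hypothesis on $G(X_t,Y_t,W_t,V_t)$, which is harmless since in the application $G$ will be the difference of two $L^p(W)$-type terms). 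A secondary technical point, also routine, is that $3^\circ$ is phrased as \emph{local} concavity along lines; one reads off from it the pointwise Hessian inequality at each interior point by a standard second-derivative-test argument, and this is all that It\^o's formula consumes.
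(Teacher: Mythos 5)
Your argument is essentially the paper's: both apply the multidimensional It\^o formula to $B(X_s,Y_s,W_s,V_s)$, bound the initial term via $1^\circ$, kill the first-order stochastic integral in expectation, control the second-order term by the Hessian inequality read off from $3^\circ$, and then invoke the majorization $2^\circ$. The only difference is that you flag and address the localization/integrability issue and the passage from directional concavity to the pointwise Hessian inequality, which the paper passes over quickly ("by the properties of stochastic integrals", "by a simple differentiation"); this is additional care within the same proof, not a different route.
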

\begin{proof}
The argument rests on  It\^o's formula. Consider an auxiliary process $Z=(X,Y,W,V)$. Since $B$ is of class $C^2$, we may write
$$ B(Z_t)=I_0+I_1+I_2/2,$$
where
\begin{align*}
I_0&=B(Z_0),\\
 I_1&=\int_{0+}^t B_x(Z_{u})\mbox{d}X_u+\int_{0+}^t B_y(Z_{u})\mbox{d}Y_u+\int_{0+}^t B_w(Z_{u})\mbox{d}W_u+\int_{0+}^t B_v(Z_{u})\mbox{d}V_u,\\
I_2&=\int_{0+}^t D^2B(Z_{u})\mbox{d}\langle Z\rangle_u.
\end{align*}
Here $D^2B$ is the Hessian matrix of $B$ and in the definition of $I_2$ we have used a shortened notation for the sum of all second-order terms. Let us study the properties of the terms $I_0,\,I_1$ and $I_2$. The first of them is nonpositive because of the condition 1$^\circ$. The expectation of $I_1$ is zero, by the properties of stochastic integrals. To handle the last term, note that by a simple differentiation, 3$^\circ$ implies
$$ \big\langle D^2B(x,y,w,v)(d,e,r,s),(d,e,r,s)\big\rangle\leq 0$$
for any $(x,y,w,v)\in \mathbb{R}^2\times \mathcal{D}_c$ and any $(d,e,r,s)\in \R^4$ satisfying $|e|\leq |d|$. 
This implies $I_2\leq 0$, by a straightforward approximation of the integral by Riemann sums. Putting all the above observations together, we get $ \E B(Z_t)\leq 0,$ which combined with the majorization condition 2$^\circ$ gives the assertion.
\end{proof}

We conclude this section with three observations.

\begin{remark}
The above statement is true without the assumption that $B$ is of class $C^2$: it is enough to ensure that $B$ is continuous. Indeed, the condition 3$^\circ$ guarantees that any possible `cusp' of $B$ is of concave type and hence the argument works. More precisely, this can be proved by standard mollification argument (consult e.g. Domelevo and Petermichl \cite{DP} or Wang \cite{W}). There are also several other methods of showing this. One can use the appropriate extension of It\^o's formula developed in \cite{P}; alternatively, one can first establish the corresponding estimate for (discrete-time) martingales and use approximation: see \cite{Bu1} for details.
\end{remark}

\begin{remark}
The above approach works also in the unweighted case, which corresponds to the choice $c=1$. Then the processes $W$ and $V$ are constant, and hence the special function $B$ depends only on the variables $x$, $y$. This brings us back to the original setting considered by Burkholder.
\end{remark}

\begin{remark}\label{extension}
The above approach is very flexible and can be easily modified to other contexts. For example, suppose that we are interested in the maximal bound of the form
$$ \E G(X_t,Y_t,Y_t^*,W_t,V_t)\leq 0,\qquad t\geq 0,$$
for all $X$, $Y$, $W$ and $V$ as in \eqref{wewant}. Here $Y_t^*=\max_{0\leq s\leq t}Y_t$ is the truncated one-sided maximal function of $Y$. Then it is enough to construct $B:\{(x,y,z,w,v)\in \R^3\times \mathcal{D}_c:y\leq z\}\to \R$ satisfying
\begin{itemize}
\item[1$^\circ$] (Initial condition) We have $B(x,y,y,{w},{v}) \leq 0$ if $|y|\leq |x|$ and $1\leq \mathtt{wv}^{p-1} \leq c$.
\item[2$^\circ$] (Majorization property) We have $B\geq G$.
\item[3$^\circ$] (Concavity-type property) For any $(x,y,z,{w},{v})\in \R^3\times \mathcal{D}_{c}$ and $d,e,r,s \in \mathbb{R}$ satisfying $y<z$ and $|e|\leq |d|$, the function
\begin{equation*}
\xi_{B}(t) := B(x+td,y+te,z,{w}+tr,{v}+ts),
\end{equation*}
given for those $t$, for which $1 \leq ({w}+tr)({v}+ts) \leq c$, is locally concave. Furthermore, we have $ B_z(x,y,y,w,v)\leq 0.$
\end{itemize}
Again, the proof rests on It\^o's formula. The additional requirement formulated at the end of 3$^\circ$ enables us to handle the additional stochastic integral $\int_{0+}^t B_z(X_s,Y_s,Y_s^*,W_s,V_s)\mbox{d}X_s^*$ (and guarantees that this integral is nonpositive).
\end{remark}

\section{A special function}

Throughout this section, $c>1$ is a fixed parameter (which corresponds to the `truly' weighted context). Again, as discussed in the paragraph following \eqref{L2DP}, we only consider the case $p=2$.  The main result of this section is the following.

\begin{theorem}
There is a continuous function $B:\mathbb{R}^2\times \mathcal{D}_{c} \to \mathbb{R}$ satisfying 1$^\circ$-3$^\circ$ with $G(x,y,w,v)=\kappa (y^{2}{w}-C^{2}c^{2}x^{2}{v}^{-1})$ for some positive universal constants $\kappa$ and $C$.
\end{theorem}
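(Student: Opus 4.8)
The proof is by an explicit construction of $B$ on $\R^2\times\mathcal{D}_c$ (with $p=2$), followed by the verification of properties 1$^\circ$--3$^\circ$ from Section~2. I would organize it as follows.

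\emph{Reduction by homogeneity.} The function $G(x,y,w,v)=\kappa(y^2w-C^2c^2x^2v^{-1})$ satisfies the scaling identity $G(\lambda x,\lambda y,\mu w,\mu^{-1}v)=\lambda^2\mu\,G(x,y,w,v)$ for all $\lambda\in\R$ and $\mu>0$, and the domain $\mathcal{D}_c=\{(w,v):1\le wv\le c\}$ is invariant under $(w,v)\mapsto(\mu w,\mu^{-1}v)$. It is therefore natural to look for $B$ enjoying the same homogeneity; such a $B$ is determined by its restriction to the two-dimensional slice $\{|x|=1,\,w=v\}$, that is, by an explicit formula in the two scale-invariant quantities $|y|/|x|$ and $s:=wv\in[1,c]$. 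A candidate can be guessed by taking the unweighted Burkholder function $\kappa(y^2-x^2)$ — the $c=1$ instance, which must be recovered in the appropriate limit — and deforming it so that the majorant $-C^2c^2x^2v^{-1}$ appears and the two new variables do not spoil the concavity. One should expect $B$ to be merely continuous and piecewise smooth, glued along one or two curves inside $\mathcal{D}_c$; by the remark following Lemma~\ref{MainLemma}, continuity suffices provided each such cusp is of concave type.

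\emph{Conditions 1$^\circ$ and 2$^\circ$.} For the initial condition, I would arrange that on $\{|y|\le|x|\}$ the function $B$ is bounded above by $\kappa x^2(w-C^2c^2v^{-1})=\kappa x^2(wv-C^2c^2)/v$, which is $\le 0$ as soon as $C^2c>1$, since $wv\le c$; this imposes that the correction term be suitably small near $|y|=|x|$ (and $B(0,0,w,v)=0$ by homogeneity). The majorization $B\ge G$ is obtained by writing $B=G+R$ with a nonnegative remainder $R$ on each piece, the generous negative term $-\kappa C^2c^2x^2v^{-1}$ leaving ample room; here one first fixes $\kappa$ and then chooses $C$ large in terms of universal quantities only.

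\emph{Condition 3$^\circ$, the main obstacle.} Essentially all the work is here. On the interior of $\mathcal{D}_c$ the requirement is that the quadratic form $(d,e,r,s)\mapsto\langle D^2B(x,y,w,v)(d,e,r,s),(d,e,r,s)\rangle$ be nonpositive whenever $|e|\le|d|$, while on the boundary lines $wv=1$ and $wv=c$ only inward/tangential directions occur and are covered by continuity from the interior. The plan is to freeze $(d,r,s)$ and maximize the form over $e\in[-|d|,|d|]$: since it is quadratic in $e$, the maximum is attained either at $e=\pm d$ — i.e. along the differential-subordination diagonals $(d,\pm d,r,s)$ — or, when $B_{yy}<0$, at an interior critical value of $e$. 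This reduces the four-dimensional inequality to an inequality in the remaining variables, which by homogeneity becomes a one-parameter family of inequalities in $|y|/|x|$, parametrized by $s=wv\in[1,c]$; tuning $C$ makes them hold. The decisive — and delicate — use of the hypothesis $W\in A_2$ is precisely the constraint $wv\le c$: the weighted bound is false without it, so $B$ must use up all the slack between $wv$ and $c$, which makes the concavity hardest to maintain along $wv=c$ and across the gluing curve(s). I expect the bulk of the argument, and its only genuine difficulty, to be the bookkeeping in this Hessian analysis together with the verification of the cusp conditions.
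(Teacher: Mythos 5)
There is a genuine gap: you have described a plausible \emph{strategy} for finding $B$, but you have not actually constructed it, and the construction is the entire content of this theorem. The statement asserts the existence of a concrete function; you never exhibit one. Your ``candidate can be guessed by \dots deforming'' the unweighted $\kappa(y^2-x^2)$ is not a construction, and the theorem does not follow from the observations you make about homogeneity, about maximizing the Hessian form over $e\in[-|d|,|d|]$, or about concave cusps, until a specific $B$ is written down and the estimates are carried out. The heavy lifting you defer to ``bookkeeping'' is in fact the whole proof.

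To make this concrete: the paper's $B$ is built from six explicit blocks, $b_1=y^2w\phi(wv)$, $b_2=y^2(2v)^{-1}$, $b_3=c^2x^2v^{-1}$, $b_4=c^{\beta}|x||y|w^{1-\beta}v^{-\beta}$, $b_5=c^{\beta}y^2w^{1-\beta}v^{-\beta}$, $b_6=c^2x^2w\psi(wv)$, with $\beta=3/4$, $\phi(t)=2-1/t-\ln t/(2c)$, $\psi=(t\phi)^{-1}$, and is glued on three angular regions determined by the ratio $|y|/(|x|(c/wv)^{1-\beta})$. Your homogeneity observation is correct and is indeed reflected in this formula (each $b_j$ scales as $\lambda^2\mu$ and the regions depend only on $|y|/|x|$ and $wv$), but that invariance reduces only the \emph{point} $(x,y,w,v)$ to two parameters; the concavity condition $3^\circ$ is still a statement about a $4\times4$ Hessian tested against arbitrary directions $(d,e,r,s)$ with $|e|\le|d|$, and the paper spends three lemmas verifying it piecewise via Sylvester's criterion after substituting $e=\lambda d$ with $|\lambda|$ small where admissible. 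Your ``maximize over $e$'' reduction is a reasonable alternative device, but it still leaves a nontrivial quadratic-form inequality in $(d,r,s)$ whose validity depends delicately on the choice of $\phi$, $\psi$, $\beta$, the region boundaries, and the numerical coefficients $320$, $32$, $6400$, $320000$, $294400$; none of these appear in your proposal, and nothing in your sketch forces a workable choice. Until you produce the explicit $B$ and verify $3^\circ$ on each region (plus the ordering $B_1\le B_2$ on $D^1$, etc., which guarantees the gluing cusps are concave), the proof is not complete.
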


The above statement combined with Lemma \ref{MainLemma} yields the validity of \eqref{L2DP}, by passing $t\to \infty$ and using standard limiting arguments. A slightly stronger, maximal estimate announced in Introduction will be proved at the end of this section. 
Assume the $D^{1}, D^{2}, D^{3}$ are the `angular' subsets of $\R^2\times \mathcal{D}_{c}$ given by
\begin{align}\label{Domains}
D^1 &=\left\{(x,y,{w},{v}): |y| \geq 20c|x|(c/t)^{1-\beta}  \right\}, \nonumber \\
D^2 &=\left\{(x,y,{w},{v}): 10|x|\leq|y| \leq 20c|x|(c/t)^{1-\beta}  \right\}, \\
D^3 &=\left\{(x,y,{w},{v}): |y|\leq 10|x|  \right\} \nonumber.
\end{align}
Here and in what follows, we denote $t={w}{v}$. 
Define the functions $b_{i}:\mathcal{D}_c\to \R$ by
\begin{align*}
b_{1}(x,y,{w},{v}) &= y^{2}{w}\phi({w}{v}), \\
b_{2}(x,y,{w},{v}) &= y^{2}(2{v})^{-1},\\
b_{3}(x,y,{w},{v}) &= c^2x^{2}{v}^{-1},\\
b_{4}(x,y,{w},{v}) &= c^{\beta}|x||y|{w}^{1-\beta} {v}^{-\beta},\\
b_{5}(x,y,{w},{v}) &= c^{\beta}y^{2}{w}^{1-\beta}{v}^{-\beta},\\
b_{6}(x,y,{w},{v}) &= c^{2}x^{2}{w}\psi({w}{v}),
\end{align*}
where $\beta=3/4$ and $\phi,\psi$ are functions from $[1,c]$ to $\mathbb{R}$ given by
\begin{align*}
\phi(t) &= 2 - \frac{1}{t} - \frac{\ln(t)}{2c},\qquad \psi(t) = (t\phi(t))^{-1}.
\end{align*}
Furthermore, set $U(x,y,{w},{v})=b_1-b_2-320000b_3-294400b_6$. Now we are finally ready to introduce the explicit formula for the desired Burkholder's function $B$:
$$ B(x,y,{w},{v})=\begin{cases}
B_1(x,y,{w},{v}) & \mbox{on }D^1, \\
B_2(x,y,{w},{v}) &\mbox{on }D^2, \\
B_3(x,y,{w},{v}) & \mbox{on }D^3,
\end{cases}$$
where $B_1$, $B_2$, $B_3:\R^2\times \mathcal{D}_c\to \R$ are given by
\begin{align*}
 B_1(x,y,{w},{v})&=U(x,y,{w},{v})+6400c^{2}x^{2}{v}^{-1},\\
B_2(x,y,{w},{v})&=U(x,y,{w},{v})+320b_{4}(x,y,{w},{v}),\\
B_3(x,y,{w},{v})&=U(x,y,{w},{v})+32b_{5}(x,y,{w},{v}).
\end{align*}

As we have already announced earlier, this function has quite a complicated form, it is actually defined with three different formulas on three separate domains. Let us verify that it satisfies the conditions 1$^\circ$-3$^\circ$ listed in the formulation of Lemma \ref{MainLemma}. The first two properties are relatively easy to prove; the main difficulty lies in establishing the concavity condition. We start with the easy part. Note that the second half of the lemma below implies the continuity of $B$.

\begin{lemma} The function $B$ satisfies the properties 1$^{\circ}$ and 2$^{\circ}$. Furthermore, we have $B_{1}\leq B_{2}$ on $D^1$, $B_{2} \leq \min(B_{1},B_{3})$ on $D^2$ and $B_{3}\leq B_{2}$ on $D^3$.
\end{lemma}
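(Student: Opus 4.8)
The plan is to verify the three asserted inequalities between the pieces $B_1,B_2,B_3$ first, since these simultaneously give the continuity of $B$ across the interfaces $\{|y|=20c|x|(c/t)^{1-\beta}\}$ and $\{|y|=10|x|\}$, and then to deduce properties $1^\circ$ and $2^\circ$ from them together with a direct estimate of each $B_i$. Because every $B_i$ equals the common term $U$ plus a single extra nonnegative summand, all comparisons reduce to inequalities between those extra summands on the relevant angular region. Concretely: on $D^1$ one must check $6400c^2x^2v^{-1}\le 320\,b_4=320\,c^\beta|x||y|w^{1-\beta}v^{-\beta}$; dividing by $c^\beta|x|v^{-\beta}$ (all positive) and using $wv=t$ this becomes $6400c^{2-\beta}|x|v^{\beta-1}\le 320|y|w^{1-\beta}$, i.e. $20c^{2-\beta}|x|\le |y|(wv)^{1-\beta}=|y|t^{1-\beta}$, which is exactly the defining inequality $|y|\ge 20c|x|(c/t)^{1-\beta}$ of $D^1$ rewritten. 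The inequality $B_2\le B_1$ on $D^2$ is the same computation read in the other direction (valid because on $D^2$ we have $|y|\le 20c|x|(c/t)^{1-\beta}$), and $B_2\le B_3$ on $D^2\cup D^3$, i.e. $320 b_4\le 32 b_5$, is simply $10|x||y|\le |y|^2$, which holds because $|y|\ge 10|x|$ on $D^2$; conversely $B_3\le B_2$ on $D^3$ uses $|y|\le 10|x|$. This disposes of the ``furthermore'' clause and hence of continuity.

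For property $1^\circ$ I would argue on the region $\{|y|\le|x|\}$, which lies inside $D^3$ (since there $|y|\le|x|\le 10|x|$), so $B=B_3=U+32b_5$ there. One bounds $b_5=c^\beta y^2w^{1-\beta}v^{-\beta}\le c^\beta x^2 w^{1-\beta}v^{-\beta}$ and, using $1\le wv\le c$ so that $w^{1-\beta}v^{-\beta}=(wv)^{-\beta}w\cdot v^{0}$... more cleanly $w^{1-\beta}v^{-\beta}=w\,t^{-\beta}\le w$, giving $b_5\le c^\beta x^2 w$. Meanwhile in $U$ the term $-320000b_3=-320000c^2x^2v^{-1}$ and $b_1=y^2w\phi(t)\le x^2w\phi(t)\le 2x^2 w$ (since $\phi\le 2$ on $[1,c]$), while $-b_2,-294400b_6$ are negative. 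So $B_3\le 2x^2w+32c^\beta x^2 w-320000c^2x^2v^{-1}$; since $v^{-1}\ge w/c\ge w/c$ uses $wv\le c$, hence $c^2 v^{-1}\ge c^2 w/c=cw\ge w$, the negative term dominates once the constants are checked ($320000\cdot cw\ge 320000 w$ beats $2w+32c^\beta w$ for all $c>1$ after noting $c^\beta<c$). Property $2^\circ$, i.e. $B\ge G=\kappa(y^2w-C^2c^2x^2v^{-1})$, is checked piece by piece: the leading positive part of $G$ is absorbed by $b_1=y^2w\phi(t)$ on each piece provided $\kappa\le \phi(c)=3/2-\ln c/(2c)$ (which is bounded below by a universal constant for, say, $c$ in any fixed range — here one must be slightly careful and either restrict to a universal lower bound valid for all $c>1$ or note $\phi(t)\ge 1$ on $[1,c]$, giving $\kappa\le 1$), while the negative $-C^2c^2\kappa x^2v^{-1}$ on the right is dominated by $-320000b_3=-320000c^2x^2v^{-1}$ once $C$ and $\kappa$ are fixed with $\kappa C^2\le 320000$; the remaining terms $-b_2,320b_4,32b_5$ only help on the respective domains since on $D^2,D^3$ the extra summand is nonnegative, and on $D^1$ one has the large positive $6400c^2x^2v^{-1}$ to spare.

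The only genuinely delicate point is bookkeeping of the numerical constants so that the same $\kappa,C$ work on all three pieces and in both $1^\circ$ and $2^\circ$; there is no analytic obstacle, only the need to track that $\phi$ stays in $[1,2]$ on $[1,c]$ (immediate: $\phi(1)=1-1/(2c)<1$... indeed $\phi(1)=2-1-0=1$ and $\phi$ is increasing then eventually decreasing, with $\phi(c)=1-\ln c/(2c)$, so $\phi\ge \min(1,1-\ln c/(2c))$, which is $\ge 1-1/(2e)$ uniformly) and that $\psi=(t\phi)^{-1}$ is correspondingly controlled, so that $b_6\le c^2x^2w\psi(t)$ is harmless. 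I expect the argument to be short and entirely elementary; the real work of the paper — the concavity condition $3^\circ$ — is deferred to the subsequent lemmas and is not needed here.
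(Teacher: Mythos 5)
Your handling of the ``furthermore'' comparisons and of property $1^\circ$ is correct and essentially identical to the paper's: the $D^1$ boundary inequality $6400c^2x^2v^{-1}\le 320b_4$ is exactly $|y|\ge 20c|x|(c/t)^{1-\beta}$ rewritten, the comparison $320b_4 \lessgtr 32b_5$ is exactly $10|x|\lessgtr|y|$, and $1^\circ$ follows because $\{|y|\le|x|\}\subset D^3$ and $-320000b_3$ overwhelms $b_1+32b_5$ there.

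However, your sketch of property $2^\circ$ has the key inequality running in the \emph{wrong direction}, and this is a genuine gap, not bookkeeping. You need $B-G\ge 0$. Writing $B\ge U=b_1-b_2-320000b_3-294400b_6$, the negative contribution $-320000b_3-294400b_6$ in $B$ must be \emph{absorbed by the negative term of $G$}, i.e.\ one needs $\kappa C^2c^2x^2v^{-1}\ge 320000b_3+294400b_6$, which (after noting $\psi\le 1/t$, so $b_6\le c^2x^2v^{-1}$) forces $\kappa C^2\ge 614400$, \emph{not} $\kappa C^2\le 320000$ as you wrote. Your constraint, if taken literally, makes $-320000b_3+\kappa C^2c^2x^2v^{-1}\le 0$, which is useless for a lower bound. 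Relatedly, you list $-b_2$ among terms that ``only help''; it does not — it is nonpositive and must be absorbed together with the positive term of $G$, which is why the paper bounds $b_1-b_2=y^2w\bigl(\phi(t)-\tfrac{1}{2t}\bigr)\ge \tfrac12 y^2w$ (minimum at $t=1$), giving $\kappa\le 1/2$ rather than the $\kappa\le 1$ you obtain from $\phi\ge 1$ alone. Finally, $b_6$ is never actually accounted for in your $2^\circ$ argument ($b_6=c^2x^2w\psi(t)$ exactly; its absorption is precisely what inflates $614400$ from $320000$). Once these are corrected — use $\kappa=\tfrac12$, bound $320000b_3+294400b_6\le 614400c^2x^2v^{-1}$ via $\psi\le 1/t$, and take $C^2=1228800$ — the argument closes and coincides with the paper's. (Minor slips along the way: $\phi(1)=1$, $\phi$ is strictly increasing on $[1,c]$ since $\phi'(t)=(2c-t)/(2ct^2)>0$, and $\phi(c)=2-1/c-\ln c/(2c)$, not $3/2-\ln c/(2c)$.)
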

\begin{proof}
To check the initial condition, note that for $|y|\leq |x|$ we have $(x,y,{w},{v}) \in D^3$. Furthermore, from $\phi(t) \leq 2$, we obtain
\begin{align*}
B_{3}(x,y,{w},{v})  \leq b_{1}+32b_{5}-320000b_{3} &\leq x^{2}{w}(2+32c^{\beta}({w}{v})^{-\beta}-320000(c^{2}/t))\\
& \leq x^{2}{w}(2+32(c/t)^{\beta}-320000(c/t)c) \leq 0.
\end{align*}
Let us now study the majorization. Observe that
\begin{align*}
b_{1}-b_{2}=y^{2}{w}\left[2-\frac{1}{t}-\frac{\ln t}{2c}-\frac{1}{2t}\right] \geq \frac{1}{2}y^{2}{w},
\end{align*}
because the function in the square bracket is increasing and has its minimum at the point $t=1$. Now from the estimate $\phi(t) \geq 1$ we have that $\psi(t) \leq 1/t$ and as a consequence,
\begin{align*}
320000b_{3}+294400b_{6} \leq c^{2}x^{2}{w}\left(320000\frac{1}{t}+294400\frac{1}{t} \right) = 614400c^{2}x^{2}{v}^{-1}.
\end{align*}
Finally, we have
\begin{align*}
B \geq b_{1}-b_{2}-320000b_{3}-294400b_{6} &\geq \frac{1}{2}y^{2}{w}-614400c^{2}x^{2}{v}^{-1}\\
& = \frac{1}{2} (y^{2}{w}-1228800c^{2}x^{2}{v}^{-1}),
\end{align*}
so the condition 2$^{\circ}$ is satisfied with $\kappa=1/2$ and $C=(1228800)^{1/2}<1109$. 

It remains to verify the relations between $B_1$, $B_2$ and $B_3$.  If $(x,y,{w},{v}) \in D^1$, then
\begin{align*}
320b_{4}=320c^{\beta}|x||y|{w}^{1-\beta}{v}^{-\beta} \geq 6400c^{2}x^{2}{v}^{-1},
\end{align*}
so $B_{2}\geq B_{1}$. If $(x,y,{w},{v}) \in D^2$, we have reverse inequality $B_{2} \leq B_{1}$. Furthermore, on $D^2$ we have
\begin{align*}
320b_{4}=320c^{\beta}|x||y|{w}^{1-\beta}{v}^{-\beta} \leq 32 c^{\beta}|y|^{2}{w}^{1-\beta}{v}^{-\beta} = 32b_{5},
\end{align*}
which is exactly $B_2\leq B_{3}$. To finish the proof, observe that the above estimate is reversed on $D^3$.
\end{proof}

We turn our attention to the crucial condition 3$^{\circ}$. From symmetry (and the equality $B_x(0,y,{w},{v})=0$ for all $y$, ${w}$, ${v}$), without loss of generality, we may only consider points $(x,y,{w},{v})\in \R^2\times \mathcal{D}_{c}$ such that $x$ and $y$ are nonnegative. Furthermore,  it is enough to verify the version of the concavity ``localized''  to each $D^i$. More precisely, it suffices to show that for each $i$, each $(x,y,{w},{v})\in D^i$ and any $d,e,r,s \in \mathbb{R}$ satisfying $|e|\leq |d|$, the function
\begin{equation*}
\xi_{B_i}(t) := B_i(x+td,y+te,{w}+tr,{v}+ts),
\end{equation*}
given for those $t$, for which $(x+td,t+te,{w}+tr,{v}+ts)\in D^i$, satisfies $\xi_{B_i}''(0)\leq 0$. To see that this is sufficient,   suppose that we have successfully established the localized concavity and pick an arbitrary point $(x,y,w,v)$ from the domain of $B$. By the continuity of $B$, we may and do assume that $x$ and $y$ are not both $0$. If $(x,y,w,v)$ belongs to the interior of some $D^i$, then $\xi_B(t)=\xi_{B^i}(t)$ for $t$ sufficiently close to $0$ and hence $\xi_B''(0)=\xi_{B_i}''(0)\leq 0$. On the other hand, if $(x,y,w,v)$ lies on the common boundary of two sets $D^i$ and $D^j$, then by the second part of the above lemma, 
$$\xi_B(t)=\min\{\xi_{B_i}(t),\xi_{B_j}(t)\}=\begin{cases}
\xi_{B^i}(t) &\mbox{if }(x+td,t+te,{w}+tr,{v}+ts)\in D^i,\\
\xi_{B^j}(t) &\mbox{if }(x+td,t+te,{w}+tr,{v}+ts)\in D^j
\end{cases}$$
 for $t$ sufficiently close to $0$. Hence, if $\xi_B$ had a convex ``cusp'' at zero, then the same would be true for $\xi_{B^i}$ and $\xi_{B^j}$, which contradicts the localized concavity. This establishes the desired property 3$^\circ$.

The localized concavity will be accomplished by a careful analysis of the derivatives $\xi_{b_j}''(0)$ of the building blocks $b_j$, $j=1,\,2,\,\ldots,\,6$. In the next lemma we gather estimates for the parts $b_{1}$ and $b_{6}$.
\begin{lemma} \label{PropertiesQ}
We have the following estimates on the quadratic forms associated with the functions $b_{1}$ and $b_{6}$:
\begin{itemize}
\item[(a)] $\xi''_{b_{1}}(0) \leq 80c{w}e^{2}$,
\item[(b)] $\xi''_{b_{1}}(0) \leq 4{w}e^{2}+8y|e||r|$,
\item[(c)] $\xi''_{b_{6}}(0) \geq (1/16)c{v}^{-3}x^{2}s^{2}$.
\end{itemize}
\end{lemma}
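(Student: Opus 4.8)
The plan is to compute the second derivative $\xi_{b_j}''(0)$ for the two relevant building blocks by brute force and then bound the resulting quadratic forms. Both $b_1$ and $b_6$ have the product structure $b(x,y,w,v)=q(y\text{ or }x)\cdot w\cdot g(wv)$ where $q$ is a quadratic in one of the first two variables and $g$ is one of the scalar functions $\phi,\psi$ on $[1,c]$; this makes the Hessian reasonably structured. Writing $t=wv$, along the line $(x+td,y+te,w+tr,v+ts)$ the variable $t$ moves as $wv\mapsto wv+t(wr+vs)+t^2rs$, so $\frac{d}{dt}(wv)|_0 = wr+vs=:p_1$ and $\frac{d^2}{dt^2}(wv)|_0=2rs$. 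First I would record these elementary facts and the values $\phi(1)=1$, $\phi(c)=2-\tfrac1c-\tfrac1{2}\le 2$, $\phi'(t)=\tfrac1{t^2}-\tfrac1{2ct}>0$, $\phi''(t)=-\tfrac2{t^3}+\tfrac1{2ct^2}<0$ on $[1,c]$, together with the analogous crude bounds $0\le\psi\le 1/t\le 1$, $\psi'\le 0$ implied by $\phi\ge 1$ increasing, and a lower bound on $\psi''$ or more precisely on the relevant combination appearing in $\xi_{b_6}''(0)$.

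For part (a) and (b), the function is $b_1=y^2 w\phi(wv)$. Differentiating twice in $t$ and setting $t=0$ gives a sum of terms: the pure $e^2$ term $2w\phi(wv)e^2$, cross terms $y e$ against the $w$- and $v$-derivatives of $w\phi(wv)$ (these produce the $4we^2$, $8y|e||r|$ in (b) and, after using $|e|\le|d|$ is \emph{not} needed here, just $\phi\le 2$, $y|r|$ bounds), and the second-order terms in $(r,s)$ coming from $y^2$ times the Hessian of $w\phi(wv)$ in $(w,v)$. The key point for (a) is that this last block, $y^2$ times $\frac{d^2}{dt^2}[w\phi(wv)]$, is \emph{nonpositive}: one checks $\partial_w(w\phi)=\phi+wv\phi'$, and the full second derivative along the line reduces, using $\phi''<0$ and careful bookkeeping of the $rs$ and $p_1$ terms, to something $\le 0$ — this is where concavity of $t\mapsto w\phi(wv)$ in the $(w,v)$ directions is exploited. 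Then (a) follows by keeping only the $e^2$ terms and crudely bounding the surviving cross terms by $80cwe^2$ via $y\le$ (something controlled in terms of $w,v$ on the relevant domain) — actually more simply by absorbing $y|e||r|$ and the mixed $es$-type terms using Young's inequality and the trivial bounds $0<w,v$, $1\le wv\le c$, so that $\phi\le2$, $wv\phi'\le wv/\,(wv)^2\cdot wv=1$, etc. For (b) I would instead retain the genuinely present $8y|e||r|$ term rather than estimating it away, giving the sharper form needed later. For part (c), $b_6=c^2 x^2 w\psi(wv)$; here we want a \emph{lower} bound $\ge \tfrac1{16}cv^{-3}x^2 s^2$, so I would isolate the coefficient of $s^2$ in $\xi_{b_6}''(0)$, which comes solely from $c^2x^2$ times $\partial_{vv}^2[w\psi(wv)]=c^2x^2 w^3\psi''(wv)$; wait — $\psi''$ could have the wrong sign, so the right move is to use $\psi=(t\phi)^{-1}$ and compute $\partial_{vv}[w\psi(wv)]=w\cdot w^2\psi''(wv)$ explicitly: with $h(t):=t\phi(t)$ one has $t\psi(t)=1/h(t)\cdot t \cdot$, better yet, $w\psi(wv)=w/(wv\phi(wv))=1/(v\phi(wv))$, so $b_6=c^2x^2/(v\phi(wv))$, and then $\partial_{vv}[1/(v\phi)]$ is a clean one-variable-flavored computation giving a positive leading term $\sim 2/(v^3\phi)\ge 2/(v^3\cdot2)=v^{-3}$, from which the constant $c\cdot\tfrac1{16}$ drops out after accounting for the $\phi',\phi''$ corrections and the factor $c^2\ge c$.

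The main obstacle will be the sign analysis in part (c): one must show that after including \emph{all} terms in $\xi_{b_6}''(0)$ — not just the $\partial_{vv}$ piece but also the $rs$-contribution from $\frac{d^2}{dt^2}(wv)=2rs$ acting on $\psi'$, and any $r^2$, $rs$ cross contributions — the coefficient of $s^2$ survives with a definite positive constant, i.e. the other terms cannot conspire to cancel it. The cleanest route is to rewrite $b_6$ in the form $c^2x^2/(v\phi(wv))$ and observe that $\xi_{b_6}$ is $c^2(x+td)^2$ times a function of $(w+tr,v+ts)$ only through $v+ts$ and $(w+tr)(v+ts)$; then $\xi_{b_6}''(0)$ decomposes into (i) $2c^2d^2\cdot[\text{value}]\ge0$, (ii) cross terms $c^2\cdot 2x d\cdot\frac{d}{dt}[\cdots]$, and (iii) $c^2x^2\cdot\frac{d^2}{dt^2}[1/((v+ts)\phi((w+tr)(v+ts)))]|_0$. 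Since (i) is nonnegative and we may, if needed, discard (ii) by noting it does not affect the pure-$s^2$ coefficient, everything reduces to extracting the $s^2$-coefficient of (iii), which is an honest two-variable second-derivative computation that I expect to yield exactly a positive multiple of $v^{-3}x^2\phi^{-1}\ge\tfrac12 v^{-3}x^2$, whence the claimed $\tfrac1{16}cv^{-3}x^2s^2$ after inserting $c^2\ge c$ and being generous with constants. The remaining parts (a),(b) are routine once the concavity of $w\mapsto w\phi(wv)$ type terms is verified from $\phi''<0$, $\phi'>0$.
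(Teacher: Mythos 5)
Your plan correctly identifies the product structure of $b_1$ and $b_6$ and the sign facts about $\phi$, but there are two genuine gaps. In part (a), the mixed term in $\xi''_{b_1}(0)$ is $4y(\phi(t)+t\phi'(t))\,er$, which after $\phi+t\phi'\le 2$ still leaves an $8y|e||r|$ contribution; since $y$ ranges over all of $\R$, this cannot be bounded by a multiple of $cwe^2$ ``using Young's inequality and the trivial bounds $0<w,v$, $1\le wv\le c$.'' Any Young split produces a $y^2r^2$ remainder, and the only thing that can absorb it is the (strictly negative) $r^2$-block of the Hessian of $b_1$, which is itself proportional to $y^2$. So the estimate is genuinely a rank-one-perturbed semidefiniteness statement: one must show that $D^2b_1-80cw\,E_{11}\preceq 0$ as a quadratic form in $(e,r,s)$, and the constant $80c$ is calibrated by a determinant cancellation --- $40c\,\phi'(2\phi'+t\phi'')=-20\phi'/t$ exactly offsets $\bigl(2(\phi')^2-\phi\phi''\bigr)(\phi+t\phi')\le 20\phi'/t$ --- that your sketch never sets up.

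In part (c), the step ``we may discard (ii) by noting it does not affect the pure-$s^2$ coefficient'' is a logical error: the inequality $\xi''_{b_6}(0)\ge\tfrac1{16}cv^{-3}x^2s^2$ is equivalent to $D^2b_6-\tfrac1{16}cv^{-3}x^2E_{33}\succeq 0$ in the variables $(d,r,s)$, and subtracting a rank-one piece from the $(3,3)$ entry can destroy positive semidefiniteness even if that entry stays positive. (Compare: $d^2+2\alpha ds+\beta s^2\ge\gamma s^2$ for all $(d,s)$ iff $\beta-\alpha^2\ge\gamma$, not iff $\beta\ge\gamma$.) So you cannot decouple the $s^2$-coefficient from the $ds$- and $rs$-cross terms; one must check the full Sylvester criterion for the modified Hessian, which the paper does after substituting $\widehat\psi=t\psi=1/\phi$, and the determinant condition again hinges on $2\phi'+t\phi''=-1/(2ct)$ together with $-\phi\phi''/\phi'\le 8/t$. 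Your rewriting $b_6=c^2x^2/(v\phi(wv))$ is a legitimate simplification that the paper uses implicitly, but without the full minor check the argument is incomplete. Part (b) is indeed routine once the matrix framework of (a) is in place, as you say.
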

\begin{proof}
(a) It is equivalent to showing the nonpositive-definiteness of the matrix
\begin{align*}
\mathbb{A}(y,{w},{v})\!=\!\begin{pmatrix}
  2{w}\phi(t)-80c{w} & 2y\phi(t)+2yt\phi'(t) & 2y{w}^{2}\phi'(t) \\
  2y\phi(t)+2yt\phi'(t) & 2y^{2}{v}\phi'(t)+y^{2}t{v}\phi''(t) & 2y^{2}{w}\phi'(t)+y^{2}t{w}\phi''(t)\\
  2y{w}^{2}\phi'(t) & 2y^{2}{w}\phi'(t)+ y^{2}t{w}\phi''(t) & y^{2}{w}^{3}\phi''(t) 
 \end{pmatrix}.
\end{align*}
From Sylvester's criterion, it is enough to prove that
\begin{align}\label{CondA1}
y^{2}{w}^{3}\phi''(t) \leq 0,
\end{align}
\begin{align}\label{CondA2}
\det\begin{pmatrix}
  2y^{2}{v}\phi'(t)+y^{2}t{v}\phi''(t) & 2y^{2}{w}\phi'(t)+y^{2}t{w}\phi''(t)\\
  2y^{2}{w}\phi'(t)+ y^{2}t{w}\phi''(t) & y^{2}{w}^{3}\phi''(t) 
 \end{pmatrix} \geq 0
\end{align}
and
\begin{align}\label{CondA3}
\det\mathbb{A}(y,{w},{v}) \leq 0.
\end{align}
The inequality (\ref{CondA1}) follows immediately from $t \in [1,c]$ and the estimate
\begin{align*}
y^{2}{w}^{3}\phi''(t)=-\frac{y^{2}{w}^{3}}{2ct^{3}}(4c-t) \leq 0.
\end{align*}
The inequality (\ref{CondA2}) is equivalent to $\phi'(t)(2\phi'(t)+t\phi''(t))\leq 0$ and follows from 
\begin{align*}
\phi'(t)=\frac{1}{2ct^{2}}(2c-t) \geq 0 \qquad \mbox{ and }\qquad 2\phi'(t)+t\phi''(t)=-\frac{1}{2ct} \leq 0.
\end{align*}
In order to show (\ref{CondA3}) we simplify the matrix $\mathbb{A}$ by carrying out some elementary operations. The determinant of $\mathbb{A}$ has the same sign as
\begin{align*}
& \det\begin{pmatrix}
  -80c{w} & 2\phi(t)+2t\phi'(t) & 0 \\
  2\phi(t) & 0 & 2\phi'(t)\\
  2{w}\phi'(t) & 2\phi'(t)+ t\phi''(t) & {w}\phi''(t) 
 \end{pmatrix} \\
 & = 4{w} \left[\left(2(\phi'(t))^{2}-\phi(t)\phi''(t)\right)(\phi(t)+t\phi'(t))+40c\phi'(t)(2\phi'(t)+t\phi''(t))\right].
\end{align*}
We compute that
\begin{align} \label{PhiIneq}
\phi(t)+t\phi'(t)=2-\frac{\ln(t)}{2c}-\frac{1}{2c} \leq 2,
\end{align}
\begin{align*}
2(\phi'(t))^{2}=\phi'(t)\frac{2c-t}{ct^{2}}\leq\frac{2\phi'(t)}{t}
\end{align*}
and, since $\phi(t)\leq 2$,
\begin{align}\label{PhiIneq2}
-\frac{\phi(t)\phi''(t)}{\phi'(t)}\leq \frac{2\left(\frac{2}{t^{3}}-\frac{1}{2ct^{2}}\right)}{\frac{1}{t^{2}}-\frac{1}{2ct}} \leq \frac{8}{t}.
\end{align}
Combining these facts we obtain
\begin{align*}
(2(\phi'(t))^{2}-\phi(t)\phi''(t))(\phi(t)+t\phi'(t))\leq \frac{20\phi'(t)}{t},
\end{align*}
and since
\begin{align*}
40c\phi'(t)(2\phi'(t)+t\phi''(t))=-\frac{20\phi'(t)}{t},
\end{align*}
the inequality (\ref{CondA3}) is satisfied. This completes the proof of the part $(a)$.

(b) Firstly, observe that it is sufficient to prove the nonpositive-definiteness of the matrix
\begin{align*}
\mathbb{B}(y,{w},{v})=\begin{pmatrix}
  2{w}\phi(t)-4{w} & 0 & 2y{w}^{2}\phi'(t) \\
  0 & 2y^{2}{v}\phi'(t)+y^{2}t{v}\phi''(t) & 2y^{2}{w}\phi'(t)+y^{2}t{w}\phi''(t)\\
  2y{w}^{2}\phi'(t) & 2y^{2}{w}\phi'(t)+ y^{2}t{w}\phi''(t) & y^{2}{w}^{3}\phi''(t) 
 \end{pmatrix}.
\end{align*}
Indeed, we have the estimate
\begin{align*}
\xi''_{b_1}(0) &=\left \langle\mathbb{B}(y,{w},{v})(e,r,s),(e,r,s) \right \rangle + 4{w}e^{2}+2(2y\phi(t)+2yt\phi'(t))er \\
&\leq 4{w}e^{2}+4(y\phi(t)+yt\phi'(t))|e||r| \leq 4{w}e^{2}+8y|e||r|,
\end{align*}
where the last inequality follows from (\ref{PhiIneq}).

From Sylvester's criterion, the nonpositive-definiteness of the matrix $\mathbb{B}$ is equivalent to inequalities (\ref{CondA1}) and (\ref{CondA2}) (which we already showed in the proof of the $(a)$ part of the lemma) and the estimate
\begin{align}\label{CondB3}
\det \mathbb{B}(y,{w},{v}) \leq 0.
\end{align}
By carrying out some elementary operations we show that the determinant of $\mathbb{B}$ has the same sign as
\begin{align*}
&\det \begin{pmatrix}
  2{w}\phi(t)-4{w}+2{w}t\phi'(t) & 0 & 2{w}\phi'(t) \\
  0 & 0 & 2\phi'(t)\\
  2{w}\phi'(t)+{w}t\phi''(t) & 2\phi'(t)+ t\phi''(t) & {w}\phi''(t) 
 \end{pmatrix}\\
 &=-(2\phi'(t)+t\phi''(t))2\phi'(t)(2{w}\phi(t)-4{w}+2{w}t\phi'(t)).
\end{align*}
However, we compute that
\begin{align*}
2\phi'(t)+t\phi''(t)=-\frac{1}{2tc} \leq 0.
\end{align*}
So, since $\phi(t)+t\phi'(t) \leq 2$ and $\phi'(t) \geq 0$,
the inequality (\ref{CondB3}) is satisfied.

(c) In analogy to the above considerations, we must show that the matrix
\begin{align*}
&\mathcal{C}(x,{w},{v})\\
&=\begin{pmatrix}
  2c^{2}{w}\psi(t) & 2xc^{2}(\psi(t)+t\psi'(t)) & 2xc^{2}{w}^{2}\psi'(t) \\
  2xc^{2}(\psi+t\psi'(t)) & x^{2}c^{2}(2{v}\psi'(t)+{w}{v}^{2}\psi''(t)) & x^{2}c^{2}(2{w}\psi'(t)+{w}^{2}{v}\psi''(t))\\
  2xc^{2}{w}^{2}\psi'(t) & x^{2}c^{2}(2{w}\psi'(t)+{w}^{2}{v}\psi''(t)) & x^{2}c^{2}{w}^{3}\psi''(t)-\frac{1}{16}c{v}^{-3}x^{2} 
 \end{pmatrix}
\end{align*}
is nonpositive-definite. 
For notational convenience, let us define the function $\widehat{\psi}:[1,c]\longrightarrow \mathbb{R}$ as $\widehat{\psi}(t)=t\psi(t)=(\phi(t))^{-1}$ and set 
$$d(t,x)=x^{2}c^{2}(2{v}^{-3}\widehat{\psi}-2{v}^{-2}{w}\widehat{\psi}'+{v}^{-1}{w}^{2}\widehat{\psi}'')-\frac{1}{16}{v}^{-3}cx^{2}.$$
Then we can rewrite the matrix $\mathcal{C}$ as
\begin{align*}
\begin{pmatrix}
  2c^{2}\widehat{\psi}(t){v}^{-1} & 2c^{2}x\widehat{\psi}'(t) & 2xc^{2}(\widehat{\psi}'(t){w}{v}^{-1}-\widehat{\psi}(t){v}^{-2}) \\
  2c^{2}x\widehat{\psi}'(t) & c^{2}{v}x^{2}\widehat{\psi}''(t) & c^{2}{w}x^{2}\widehat{\psi}''(t)\\
  2xc^{2}(\widehat{\psi}'(t){w}{v}^{-1}-\widehat{\psi}(t){v}^{-2}) & c^{2}{w}x^{2}\widehat{\psi}''(t) & d(x,t)
 \end{pmatrix}.
\end{align*}
Again, from Sylvester's criterion, we reduce the problem to checking the signs of appropriate  minors. More precisely, we will show that
\begin{align}\label{CondC1}
2c^{2}\widehat{\psi}{v}^{-1} \geq 0,
\end{align}
\begin{align}\label{CondC2}
\det\begin{pmatrix}
  2c^{2}\widehat{\psi}(t){v}^{-1} & 2c^{2}x\widehat{\psi}'(t)\\
  2c^{2}x\widehat{\psi}'(t) & c^{2}{v}x^{2}\widehat{\psi}''(t)
 \end{pmatrix} \geq 0
\end{align}
and
\begin{align}\label{CondC3}
\det \mathcal{C}(x,{w},{v}) \geq 0.
\end{align}
The inequality (\ref{CondC1}) is obvious. Condition (\ref{CondC2}) is equivalent to $\widehat{\psi}(t)\widehat{\psi}''(t)-2(\widehat{\psi}'(t))^{2} \geq 0$, which is a consequence of the definition $\widehat{\psi}(t)=(\phi(t))^{-1}$ and the inequality $\phi''(t) \leq 0$. To show (\ref{CondC3}), we perform certain elementary operations on the columns and rows of the matrix to  prove that the determinant of $\mathcal{C}$ has the same sign as
\begin{align*}
&\det \begin{pmatrix}
  2\widehat{\psi}(t){v}^{-1} & 2\widehat{\psi}'(t) & 0 \\
  2\widehat{\psi}'(t) & {v}\widehat{\psi}''(t) & 2{v}^{-1}\widehat{\psi}'(t)\\
  -2\widehat{\psi}(t){v}^{-2} & 0 & -2w{v}^{-2}\widehat{\psi}'(t)-\frac{1}{16}{v}^{-3}c^{-1}
 \end{pmatrix} \\
&= 2{v}^{-3}\left(\left(-2\widehat{\psi}'(t)t-\frac{1}{16}c^{-1}\right)(\widehat{\psi}(t)\widehat{\psi}''(t)-2(\widehat{\psi}'(t))^{2})-4\widehat{\psi}(t)(\widehat{\psi}'(t))^{2} \right).
\end{align*}
We compute that
\begin{align*}
\widehat{\psi}'(t)=-\frac{\phi'(t)}{\phi^{2}(t)},
\end{align*}
\begin{align*}
\widehat{\psi}''(t)=-\frac{\phi(t)\phi''(t)-2(\phi'(t))^{2}}{\phi^{3}(t)}
\end{align*}
and
\begin{align*}
\widehat{\psi}(t)\widehat{\psi}''(t)-2(\widehat{\psi}'(t))^{2} = -\frac{\phi''(t)}{\phi^{3}(t)}.
\end{align*}
Hence we need to show that
\begin{align*}
-2\phi'(t)\phi''(t)t+\frac{1}{16}c^{-1}\phi''(t)(\phi(t))^{2}-4(\phi'(t))^{2}\geq 0.
\end{align*}
Now observe that
\begin{align*}
-2\phi'(t)\phi''(t)t - 4(\phi'(t))^{2}=-2\phi'(t)(2\phi'(t)+\phi''(t)t)=\phi'(t)c^{-1}t^{-1}
\end{align*}
and from (\ref{PhiIneq2}) and $\phi(t) \leq 2$
\begin{align*}
\frac{1}{16}c^{-1}\phi''(t)(\phi(t))^{2} \geq \frac{-\phi(t)\phi'(t)}{2ct} \geq -\frac{\phi'(t)}{ct},
\end{align*}
which completes the proof of the lemma.
\end{proof}

In the series of three lemmas below we will show that the function $B$ satisfies required concavity condition. Let us start with the domain $D^1$.
\begin{lemma}\label{Domain1}
We have $\xi''_{b_{1}-b_{2}-160b_{3}}(0) \leq 0$ for any $(x,y,{w},{v}) \in D^1$ and $(d,e,r,s)$ such that $|e|\leq |d|$.
\end{lemma}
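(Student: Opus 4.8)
The plan is to reduce the claim $\xi''_{b_1-b_2-160b_3}(0)\le 0$ on $D^1$ to a purely one-variable (in the direction parameter) quadratic-form estimate, and then to combine the building-block bounds already available, in particular part (a) of Lemma \ref{PropertiesQ}, with a crude but sufficient control of $\xi''_{b_2}(0)$ and $\xi''_{b_3}(0)$. First I would record $\xi''_{b_2}(0)$ and $\xi''_{b_3}(0)$ explicitly. Writing $t=wv$ and differentiating $b_2(x,y,w,v)=y^2(2v)^{-1}$ twice along $(x+td,y+te,w+tr,v+ts)$ gives, at $0$,
\begin{equation*}
\xi''_{b_2}(0)=\frac{e^2}{v}-\frac{2yes}{v^2}+\frac{y^2 s^2}{v^3},
\end{equation*}
which is a perfect square $v^{-3}(ve-ys)^2\ge 0$; hence $-\xi''_{b_2}(0)\le 0$ always, and this term only helps. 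Similarly $b_3(x,y,w,v)=c^2x^2v^{-1}$ yields $\xi''_{b_3}(0)=c^2 v^{-3}(vd-xs)^2\ge 0$, again a perfect square, so $-160\,\xi''_{b_3}(0)\le 0$ unconditionally. Therefore it suffices to show $\xi''_{b_1}(0)\le 160\,\xi''_{b_3}(0)+\xi''_{b_2}(0)$, and in fact the $b_2$ term can simply be discarded, leaving the target $\xi''_{b_1}(0)\le 160\,c^2 v^{-3}(vd-xs)^2$.

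Next I would bring in the geometry of $D^1$. On $D^1$ we have $y\ge 20c\,x\,(c/t)^{1-\beta}$ with $\beta=3/4$, so $y\ge 20c\,x\,(c/t)^{1/4}\ge 20c\,x$ (since $c/t\ge 1$); more usefully, for the concavity direction the constraint $|e|\le|d|$ is what feeds into part (a). From Lemma \ref{PropertiesQ}(a) we already know $\xi''_{b_1}(0)\le 80c\,w\,e^2\le 80c\,w\,d^2$. So it would be enough to prove
\begin{equation*}
80c\,w\,d^2\le 160\,c^2\,v^{-3}(vd-xs)^2 .
\end{equation*}
This is \emph{not} true for arbitrary $d,s$ (take $s=vd/x$), so the bound from (a) alone is too lossy and one must be more careful: the point is that on $D^1$ the function $B_1$ also contains the extra summand $6400\,c^2 x^2 v^{-1}=6400\,b_3$, not just $-320000\,b_3$ inside $U$; but the present lemma isolates only the combination $b_1-b_2-160b_3$, which is exactly the "spare" concavity that must be verified directly. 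So the real content is a two-variable (in $d$ and $s$, with the $w,v,x,y$-dependence as parameters) inequality, and I expect the proof to proceed by splitting into the two regimes $|s|\le \eta\, v|d|/x$ and $|s|\ge \eta\,v|d|/x$ for a suitably chosen threshold $\eta$: in the first regime $(vd-xs)^2\ge (1-\eta)^2 v^2 d^2$ controls the $w d^2$ term after using $wv\le c$; in the second regime one instead exploits that $\xi''_{b_1}(0)$ is genuinely small because the $b_1$-Hessian, being built from $\phi$ which is close to constant, contributes little in the $s$-direction — here I would use the more refined matrix computation behind part (a) (the matrix $\mathbb A$), or re-derive the exact expression for $\xi''_{b_1}(0)$ and bound the cross terms $ye r$, $y^2 s^2 \phi''$ etc. using $t\in[1,c]$, $|\phi'|\le 1/(2t)$, $|\phi''|\le 2/t^3$ and the $D^1$-lower bound on $y$.

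The main obstacle, then, is exactly this last step: showing that the "defect" quadratic form $Q(d,e,r,s)=\xi''_{b_1}(0)-\xi''_{b_2}(0)-160\,\xi''_{b_3}(0)$ is negative semidefinite on the slice $|e|\le|d|$, using the $D^1$ constraint to absorb the dangerous $r$- and $s$-directions. Concretely I would write $Q$ as $\langle \mathbb M(x,y,w,v)(d,e,r,s),(d,e,r,s)\rangle$ with $|e|\le|d|$, replace $e^2$ by $d^2$ and $|e||r|$ by $|d||r|$ wherever the coefficient has the favourable sign (as in the proof of (b)), and then apply Sylvester's criterion to the resulting $3\times 3$ matrix in $(d,r,s)$ exactly as was done for $\mathbb A$, $\mathbb B$, $\mathcal C$ above. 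The numbers ($160$, $\beta=3/4$, the factor $20c$ in $D^1$) are presumably tuned so that the leading minor inequality reduces, after clearing $\phi$, $\phi'$, $\phi''$, to something like $\phi''(t)\le 0$ together with an elementary estimate of the form $80c\,w\le 160\,c^2 x^{-2}v^{-3}\cdot(\text{something})$ that holds precisely because $y\ge 20c\,x(c/t)^{1/4}$ forces $x^2$ to be small relative to $y^2$, and because $\phi(t)\ge 1$ gives $b_1\le y^2 w\phi(t)$ with a controllable size. I would carry out the three minor checks in turn, flagging that the determinant condition is the delicate one and will consume the bulk of the arithmetic, just as in Lemma \ref{PropertiesQ}.
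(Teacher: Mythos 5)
Your computation of $\xi''_{b_2}(0)=v^{-3}(ve-ys)^2$ and $\xi''_{b_3}(0)=2c^2v^{-3}(vd-xs)^2$ is correct, and you correctly identify that the naive combination of Lemma~\ref{PropertiesQ}(a) with the $b_3$ term alone fails when $s$ is near $vd/x$. You also identify that a case split on the size of $vd-xs$ is needed, and your treatment of the ``easy'' regime (where $(vd-xs)^2\gtrsim v^2d^2$, so $-160\,\xi''_{b_3}(0)$ absorbs $80cw\,d^2$ via $c^2/v\ge cw$) matches the paper's first case, where $|d-xs/v|\ge d/2$.

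The gap is in the other regime, and it is a genuine one: you announce that ``the $b_2$ term can simply be discarded,'' and then try to salvage the regime $s\approx vd/x$ by arguing that $\xi''_{b_1}(0)$ itself is small there. This cannot work. When $s=vd/x$ exactly, the budget $160\,\xi''_{b_3}(0)$ vanishes, so one would need $\xi''_{b_1}(0)\le 0$ on that slice for all admissible $(e,r)$ with $|e|\le|d|$; but part (a) only gives $\xi''_{b_1}(0)\le 80cwe^2$, a strictly positive upper bound, and the quadratic form is genuinely positive on part of that slice. The term you threw away is precisely what rescues this case. The paper's argument is: if $|d-xs/v|<d/2$ then $sx/(vd)-1>-1/2$, while $|ex/(dy)|\le 1/(20c)$ since $|e|\le|d|$ and $y/x\ge 20c(c/t)^{1-\beta}\ge 20c$ on $D^1$; hence
\begin{equation*}
\frac{ys}{dv}-\frac{e}{d}=\frac{y}{x}\Bigl(\frac{sx}{vd}-1+1-\frac{e}{d}\cdot\frac{x}{y}\Bigr)\ge 20c\Bigl(\frac12-\frac{1}{20c}\Bigr)\ge 9c,
\end{equation*}
so $\xi''_{b_2}(0)=v^{-1}(e-ys/v)^2\ge 81c^2v^{-1}d^2\ge 81cw\,d^2$, which dominates the $80cw\,d^2$ bound for $\xi''_{b_1}(0)$. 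In other words, the crucial mechanism near $s=vd/x$ is not that $b_1$ contributes little in the $s$-direction, but that $b_2$ becomes large, and this is driven by the $D^1$ geometry ($y/x\ge 20c$) together with the differential subordination $|e|\le|d|$. By discarding $b_2$ you have removed the ingredient that closes the argument, and your proposed substitute (a refined matrix estimate for $b_1$ alone) is not viable as stated.
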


\begin{rem} This lemma handles the property 3$^{\circ}$ on the domain $D^1$. Indeed, the additional summands $-319840b_{3}$ and $-294400b_{6}$ are concave functions (concavity of $-b_{3}$ is easy to check, concavity of $-b_{6}$ follows from part $(c)$ of Lemma \ref{PropertiesQ}).
\end{rem}
\begin{proof} [Proof of Lemma \ref{Domain1}] We have that
\begin{align*}
\xi''_{b_2}(0) &=\frac{1}{{v}}\left(e-\frac{ys}{{v}}\right)^{2}, \qquad \xi''_{b_3}(0) =\frac{2c^{2}}{{v}}\left(d-\frac{xs}{{v}}\right)^{2}.
\end{align*}
Now consider two cases. If $|d-\frac{xs}{{v}}| \geq d/2$, then from above formulas and part $(a)$ of Lemma \ref{PropertiesQ} we obtain
\begin{align*}
\xi''_{b_{1}-b_{2}-160b_{3}}(0) &\leq 80c{w}e^{2}-\frac{1}{{v}}\left(e-\frac{ys}{{v}}\right)^{2}-\frac{320c^{2}}{{v}}\left(d-\frac{xs}{{v}}\right)^{2} \\
&\leq 80c{w}e^{2}-80c^{2}{v}^{-1}d^{2} \\
&\leq 80c{w}d^{2}-80c{w}d^{2} \\
&= 0.
\end{align*}
If $|d-\frac{xs}{{v}}|<d/2$, then
\begin{align*}
\frac{ys}{d{v}}-\frac{e}{d} &=\frac{ys}{d{v}}-\frac{y}{x}+\frac{y}{x}-\frac{e}{d}= y\left(\frac{s}{d{v}}-\frac{1}{x} \right) + \frac{y}{x} - \frac{e}{d} = \frac{y}{x} \left( \frac{sx}{{v}d}-1+1-\frac{e}{d}\frac{x}{y} \right) \\
 &\geq \frac{y}{x} \left( -\frac{1}{2}+1-\frac{1}{20c} \right) \geq 20c \left(\frac{1}{2}-\frac{1}{20c} \right)=10c-1 \geq 9c.
\end{align*}
Hence
\begin{align*}
\xi''_{b_{1}-b_{2}-160b_{3}}(0) \leq \xi''_{b_1-b_2}(0) \leq 80c{w}e^{2}-\frac{1}{{v}}d^{2}9^{2}c^{2} \leq 80c{w}d^{2}-81c{w}d^{2} \leq 0.
\end{align*}
The proof is complete.
\end{proof}
The next lemma discusses the concavity condition in the middle domain $D^2$.
\begin{lemma}\label{Domain2}
We have $\xi''_{b_{1}+320b_{4}-320000b_{3}}(0) \leq 0$ for any $(x,y,{w},{v}) \in D^2$ and $(d,e,r,s)$ such that $|e|\leq |d|$.
\end{lemma}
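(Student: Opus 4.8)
The plan is to decompose $B_2=U+320b_4$ into the concave piece and the piece that needs work, exactly as in the $D^1$ case. Since $U=b_1-b_2-320000b_3-294400b_6$, write $B_2 = (b_1+320b_4-320000b_3) - b_2 - 294400b_6$. The summands $-b_2$ and $-294400b_6$ are concave (the former trivially, the latter via part (c) of Lemma \ref{PropertiesQ}), so it suffices to control $\xi''_{b_1+320b_4-320000b_3}(0)$, which is precisely the asserted inequality. Thus the lemma as stated is what must be proved directly; no further reduction is available, and the real work is estimating the mixed term $320\,\xi''_{b_4}(0)$ together with $\xi''_{b_1}(0)$ and the (good, negative) term $-320000\,\xi''_{b_3}(0)$.

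First I would record the pieces. From Lemma \ref{PropertiesQ}(b), $\xi''_{b_1}(0)\leq 4we^2+8y|e||r|$; and from Lemma \ref{Domain1}'s proof, $\xi''_{b_3}(0)=\tfrac{2c^2}{v}\bigl(d-\tfrac{xs}{v}\bigr)^2$. The new ingredient is $\xi''_{b_4}(0)$ for $b_4=c^\beta|x||y|w^{1-\beta}v^{-\beta}$ with $\beta=3/4$; on the region $x,y>0$ this is a smooth function and its second derivative along $(d,e,r,s)$ is a quadratic form in $(d,e,r,s)$ whose coefficients involve $c^\beta x^{-1}$, $c^\beta y^{-1}$, $c^\beta w^{-1}$, $c^\beta v^{-1}$ times $b_4$. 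I expect the key structural fact to be that $320b_4$ is \emph{itself not concave} but its ``bad'' directions are exactly the ones where $b_1$ is small and $b_3$ is large, so the three terms must be combined rather than bounded separately — mirroring the two-case split ($|d-xs/v|\geq d/2$ versus $<d/2$) used in Lemma \ref{Domain1}. In the first case the good term $-320000b_3$ dominates everything (using $|x|\leq |y|/10$ on $D^2$ to bound the $b_1$ and $b_4$ contributions by a multiple of $c^2 v^{-1} d^2$ that is absorbed); in the second case $d-xs/v$ is small, which forces $e$, $s$, and the ratios $y/x$, $w$, $v$ into a controlled configuration, and one extracts a negative leading term from the $b_4$ form because on $D^2$ one has the two-sided bound $10|x|\leq |y|\leq 20c|x|(c/t)^{1-\beta}$ pinning down $|x||y|w^{1-\beta}v^{-\beta}$ relative to $y^2 w$ and $x^2 v^{-1}$.

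Concretely, I would: (i) compute $\xi''_{b_4}(0)$ explicitly as $b_4$ times a quadratic in the normalized increments $d/x$, $e/y$, $r/w$, $s/v$, using $\partial_x,\partial_y,\partial_w,\partial_v$ of $|x||y|w^{1-\beta}v^{-\beta}$; (ii) use $|e|\leq |d|$ and the $D^2$ constraints to replace the awkward factors — e.g.\ $b_4 = c^\beta xy\,w^{1-\beta}v^{-\beta}\leq \tfrac{1}{10}\cdot 20c\cdot c^{1-\beta}(wv)^{\beta-1}\cdot y^2 w^{1-\beta}v^{-\beta}$-type manipulations to bring $320b_4$-terms down to size $cwe^2$ and $cwd^2$; (iii) split on $|d-xs/v|$ as above and in each branch assemble $\xi''_{b_1}(0)+320\xi''_{b_4}(0)-320000\xi''_{b_3}(0)\leq 0$ by a numerical inequality among the (large) integer constants, checking that $320000$ is chosen large enough to swallow the positive contributions — the constants $20$, $10$, $320$, $320000$ in the definitions are evidently tuned for exactly this.

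The main obstacle will be step (i)–(ii): $b_4$ has the ``off-diagonal'' product structure $|x||y|$, so $\xi''_{b_4}(0)$ contains a genuinely indefinite cross term $\sim b_4\,(d/x)(e/y)$ with no obvious sign, and unlike $b_1$ this term is not killed by $|e|\leq|d|$ alone — one must spend the $D^2$ inequalities and the slack in $320000b_3$ to dominate it. Getting the bookkeeping of powers of $c$ and $t=wv$ to line up (here $\beta=3/4$ so $(c/t)^{1-\beta}=(c/t)^{1/4}$ appears, and one needs $c/t\geq 1$ i.e.\ $t\leq c$, which holds on $\mathcal{D}_c$) is where the argument is most delicate, and I would expect the proof to proceed by first reducing, via these substitutions, to a single scalar inequality of the form $A_1 e^2 + A_2 de \cdot(\text{bounded}) - A_3 d^2\leq 0$ with explicit constants, then closing it with the two-case dichotomy on $|d-xs/v|$ exactly as in Lemma \ref{Domain1}.
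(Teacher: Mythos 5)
Your opening reduction is correct and matches the paper: isolating $-b_2$ and $-294400b_6$ as concave pieces reduces $3^\circ$ on $D^2$ to the stated bound on $\xi''_{b_1+320b_4-320000b_3}(0)$, and step (i) — writing $\xi''_{b_4}(0)$ as $b_4$ times a quadratic form in the normalized increments $D=d/x$, $E=e/y$, $R=r/w$, $S=s/v$ — is exactly how the paper begins (its matrix $A_1$). But the mechanism you propose for closing the estimate does not work, and it is not the paper's.

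The dichotomy on $|d-xs/v|$ is specific to Lemma~\ref{Domain1}: there it exploits the \emph{lower} bound $y/x\ge 20c(c/t)^{1-\beta}$ defining $D^1$, together with the presence of $\xi''_{b_2}(0)=\tfrac{1}{v}(e-ys/v)^2$, to make the second branch close. On $D^2$ both of these are unavailable — the inequality $y/x\le 20c(c/t)^{1-\beta}$ now points the other way, and you have already set $-b_2$ aside as a concave spectator. More seriously, the split gives you no handle on the $w$-increment $r$. Part~(b) of Lemma~\ref{PropertiesQ} leaves you with the positive term $8y|e||r|$, and $b_3=c^2x^2v^{-1}$ does not depend on $w$, so $-320000\,\xi''_{b_3}(0)$ contributes nothing in the $R$-direction in \emph{either} branch of your dichotomy; that term can be made as large as you like by taking $|r|\to\infty$, and neither branch controls it. The only place a compensating $-|D||R|$ can come from is $b_4$ itself, and you flag the $b_4$ cross terms as the obstacle without resolving them.

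The paper's actual resolution is different and is the step you are missing: on $D^2$, the two constraints $|e|\le|d|$ and $y\ge 10x$ combine to give $|E|\le\frac{1}{10}|D|$, so one may set $E=\lambda D$ with $|\lambda|\le\frac{1}{10}$ and \emph{eliminate} $E$, reducing $\xi''_{b_4-1000b_3}(0)$ to a $3\times3$ quadratic form in $(D,R,S)$ (matrices $A_3$, $A_4$). The second $D^2$ constraint $y\le 20cx(c/t)^{1-\beta}$ then converts $1000\,b_3$ into $50$ units of the $b_4$-scale, giving the matrix bound $A_3-50A_4\preceq -\frac{1}{20}D^2-\frac{1}{40}|D||R|$, which is verified by a Sylvester-type check at the two endpoints $\lambda=\pm\frac{1}{10}$. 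The $-\frac{1}{40}|D||R|$ output is precisely what cancels the $8y|e||r|$ from part~(b), since $c^\beta xy\,w^{1-\beta}v^{-\beta}|D||R|=(c/t)^\beta y|d||r|\ge y|e||r|$. Without the $E=\lambda D$ reduction and the resulting $|D||R|$ gain, the argument does not close.
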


\begin{rem}
 This lemma handles the property 3$^{\circ}$ on the domain $D^2$. Indeed, functions $-b_{2}$ and $-294400b_{6}$ are concave, so they do not affect the condition 3$^{\circ}$.
\end{rem}
\begin{proof} [Proof of Lemma \ref{Domain2}] Let $D=\frac{d}{x}$, $E=\frac{e}{y}$, $R=\frac{r}{{w}}$ and $S=\frac{s}{{v}}$. We compute that
\begin{align*}
\xi_{b_{4}-1000b_{3}}(0) &=c^{\beta}xy{w}^{1-\beta}{v}^{-\beta} \left \langle A_{1}(E,D,R,S),(E,D,R,S)\right \rangle \\
& - 1000c^{2}x^{2}{v}^{-1} \left \langle A_{2}(D,S),(D,S)\right \rangle,
\end{align*}
where the matrices $A_1$ and $A_{2}$ are defined as
\begin{align*}
A_{1}=\begin{pmatrix}
  0 & 1 & 1-\beta & - \beta \\
  1 & 0 & 1-\beta & - \beta \\
  1-\beta & 1- \beta & \beta(\beta-1) & \beta(\beta-1) \\
  -\beta & -\beta & \beta(\beta-1) & \beta(\beta+1)
 \end{pmatrix}
\end{align*}
and
\begin{align*}
A_{2}=\begin{pmatrix}
  2 & -2 \\
  -2 & 2 
 \end{pmatrix}.
\end{align*}
From the assumption $y \geq 10x$ and differential subordination ($|e| \leq |d|$) we obtain
\begin{align*}
|E|=\left|\frac{e}{y}\right|\leq \frac{1}{10}\left|\frac{d}{x}\right|=\frac{1}{10}|D|,
\end{align*}
so $E=\lambda D$, where $\lambda$ is a constant with absolute value bounded by $\frac{1}{10}$. So, we can reduce Hessians to three variables ($D$,$R$ and $S$): we have
\begin{align}\label{EqDomain2}
\xi''_{b_{4}-1000b_{3}}(0) &=c^{\beta}xy{w}^{1-\beta}{v}^{-\beta}\left \langle A_{3}(D,R,S),(D,R,S)\right \rangle \\
&\quad - 1000c^{2}x^{2}{v}^{-1} \left \langle A_{4}(D,R,S),(D,R,S)\right \rangle,  \nonumber
\end{align}
where matrices $A_{3}$ and $A_{4}$ are defined as
\begin{align*}
A_{3}=\begin{pmatrix}
  2\lambda & (1-\beta)(1+\lambda) & - \beta(1+\lambda) \\
  (1-\beta)(1+\lambda) & \beta(\beta-1) & \beta(\beta-1) \\
  - \beta(1+\lambda) & \beta(\beta-1) & \beta(\beta+1)
 \end{pmatrix}
\end{align*}
and
\begin{align*}
A_{4}=\begin{pmatrix}
  2 & 0 & -2 \\
  0 & 0 & 0 \\
  -2 & 0 & 2
 \end{pmatrix}.
\end{align*}
Now from $y \leq 20cx(c/t)^{1-\beta}$ we have the estimate
\begin{align*}
1000c^{2}x^{2}{v}^{-1} \geq 50xy{v}^{-1}c(c/t)^{\beta-1}=50c^{\beta}xy{w}^{1-\beta}{v}^{-\beta}.
\end{align*}
Obviously, $A_{4}$ is nonnegative-definite. Hence, from the above inequality and (\ref{EqDomain2}), we obtain
\begin{align*}
\xi''_{b_{4}-1000b_{3}}(0)\leq c^{\beta}xy{w}^{1-\beta}{v}^{-\beta} \left \langle (A_{3}-50A_{4})(D,R,S),(D,R,S)\right \rangle.
\end{align*}
It is enough to show that
\begin{align}\label{Eq2Domain2}
\left \langle (A_{3}-50A_{4})(D,R,S),(D,R,S) \right \rangle \leq -\frac{1}{20}D^{2}-\frac{1}{40}|D||R|.
\end{align}
Indeed, from the above inequalities and part $(b)$ of Lemma \ref{PropertiesQ} we have that
\begin{align*}
\xi''_{b_{1}+320(b_{4}-1000b_{3})}(0) &\leq 4{w}e^{2}+8y|e||r|-16c^{\beta}xy{w}^{1-\beta}{v}^{-\beta}D^{2}-8c^{\beta}xy{w}^{1-\beta}{v}^{-\beta}|D||R| \\
& \leq 4{w}d^{2}+8y|d||r|-16{w}d^{2}(c/t)^{\beta}(y/x)-8y|d||r|(c/t)^{\beta}\\
& \leq 0.
\end{align*}
To finish the proof of the lemma, observe that the estimate (\ref{Eq2Domain2}) is equivalent to nonpositive-definiteness of the matrix
\begin{align*}
\begin{pmatrix}
  2\lambda-100+\frac{1}{20} & \frac{1}{4}(1+\lambda) \pm \frac{1}{40} & - \frac{3}{4}(1+\lambda)+100 \\
  \frac{1}{4}(1+\lambda) \pm \frac{1}{40} & -\frac{3}{16} & - \frac{3}{16} \\
  - \frac{3}{4}(1+\lambda)+100 & - \frac{3}{16} & \frac{21}{16}-100
\end{pmatrix},
\end{align*}
for every $|\lambda|\leq 1/10$, which we check by straightforward calculation (determinant of this matrix is convex as a function of $\lambda$, so it is sufficient to check only two endpoint cases $\lambda=1/10$ and $\lambda=-1/10$).
\end{proof}
Finally, we prove the concavity condition in the domain $D^3$ in the last lemma.

\begin{lemma}\label{Domain3}
We have $\xi''_{b_{1}+32b_{5}-4600b_{3}-294400b_{6}}(0) \leq 0$ for any $(x,y,{w},{v}) \in D^3$ and $(d,e,r,s)$ such that $|e|\leq |d|$.
\end{lemma}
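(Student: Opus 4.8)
The plan is to estimate
\begin{equation*}
\xi''_{b_1+32b_5-4600b_3-294400b_6}(0)=\xi''_{b_1}(0)+32\,\xi''_{b_5}(0)-4600\,\xi''_{b_3}(0)-294400\,\xi''_{b_6}(0)
\end{equation*}
one summand at a time. Three of the four building blocks are already understood: Lemma~\ref{PropertiesQ}(b) gives $\xi''_{b_1}(0)\le 4we^2+8y|e||r|$, the computation performed inside the proof of Lemma~\ref{Domain1} gives $\xi''_{b_3}(0)=\frac{2c^2}{v}\bigl(d-\frac{xs}{v}\bigr)^2$, and Lemma~\ref{PropertiesQ}(c) gives $\xi''_{b_6}(0)\ge\frac{1}{16}c\,v^{-3}x^2s^2$. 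For the remaining block $b_5=c^{\beta}y^2w^{1-\beta}v^{-\beta}$ I would compute $\xi''_{b_5}(0)$ directly, most conveniently by differentiating $\log b_5$ twice along the line $t\mapsto(x+td,y+te,w+tr,v+ts)$; with $\beta=3/4$ and the abbreviations $R=r/w$, $S=s/v$ this yields
\begin{equation*}
\xi''_{b_5}(0)=c^{\beta}w^{1-\beta}v^{-\beta}\Bigl(2e^2+yeR-3yeS-\tfrac{3}{16}y^2R^2+\tfrac{21}{16}y^2S^2-\tfrac{3}{8}y^2RS\Bigr).
\end{equation*}

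The next step is to exploit the geometry of $D^3$ together with differential subordination. After the standing reduction to $x,y\ge 0$ we have $y\le 10x$ on $D^3$ and $|e|\le|d|$, and throughout $t=wv\in[1,c]$, so that $w\le c^{\beta}w^{1-\beta}v^{-\beta}=(c/t)^{\beta}w\le c/v$ (the first inequality because $t\le c$, the last because $c^{\beta}w^{1-\beta}v^{1-\beta}=c^{\beta}t^{1-\beta}\le c$). Repeated use of $2ab\le\eta a^2+\eta^{-1}b^2$ then lets me absorb the three cross terms $yeR$, $yeS$, $y^2RS$ of $\xi''_{b_5}(0)$, together with the leftover $8y|e||r|$ coming from $b_1$, into the single negative term $-\frac{3}{16}y^2R^2$ and into enlarged multiples of $e^2$ and of $y^2S^2$. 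Substituting $y\le 10x$ turns the $y^2S^2$–terms into a multiple of $x^2s^2/v^3$, while $|e|\le|d|$ together with $c^{\beta}w^{1-\beta}v^{-\beta}\le c/v$ turns the $e^2$–terms into a multiple of $d^2/v$. The outcome is an estimate of the shape
\begin{equation*}
\xi''_{b_1+32b_5}(0)\le K_1\,\frac{c}{v}\,d^2+K_2\,c\,v^{-3}x^2s^2
\end{equation*}
with explicit absolute constants $K_1,K_2$, both comfortably below the thresholds appearing in the next step.

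Finally I would play these two positive terms off against $4600\,\xi''_{b_3}(0)$ and $294400\,\xi''_{b_6}(0)$, splitting into the cases $|d-xs/v|\ge|d|/2$ and $|d-xs/v|<|d|/2$ exactly as in the proof of Lemma~\ref{Domain1}. In the first case $4600\,\xi''_{b_3}(0)\ge\frac{2300c^2}{v}d^2\ge\frac{2300c}{v}d^2$ alone dominates the $d^2$–term, while $294400\,\xi''_{b_6}(0)\ge 18400\,c\,v^{-3}x^2s^2$ dominates the $x^2s^2$–term, both comparisons closing because $K_1<2300$ and $K_2<18400$. In the second case $|xs/v|>|d|/2$ forces $d^2<4x^2s^2/v^2$, so the whole positive expression $K_1\frac{c}{v}d^2+K_2c\,v^{-3}x^2s^2$ is itself bounded by a multiple of $c\,v^{-3}x^2s^2$ still below $18400$, hence is absorbed by the $b_6$–term alone. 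Either way $\xi''_{b_1+32b_5-4600b_3-294400b_6}(0)\le 0$, which is the assertion. The genuinely delicate point, and the one I expect to be the main obstacle, is the absorption inside $\xi''_{b_5}(0)$: writing $b_5=y^2w\,\widetilde\phi(wv)$ with $\widetilde\phi(t)=c^{\beta}t^{-\beta}$, the function $\widetilde\phi$ has the opposite monotonicity and convexity to $\phi$ (here $\widetilde\phi'<0$ and $\widetilde\phi''>0$), so $-b_5$ is \emph{not} concave and the $r$– and $s$–directions of $\xi''_{b_5}(0)$ contribute genuinely positively and cannot simply be discarded the way those of $-b_3$ and $-b_6$ can; one has to keep enough room in the negative term $-\frac{3}{16}y^2R^2$ and check that the coefficients $4600$ and $294400$ are large enough to swallow the residue.
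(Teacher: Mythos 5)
Your plan is correct, and it reaches the conclusion by a genuinely different organization of the same raw estimates. I verified the crucial formula: writing $E=e/y$, $R=r/w$, $S=s/v$, direct logarithmic differentiation of $b_5=c^{\beta}y^2w^{1-\beta}v^{-\beta}$ gives exactly $\xi''_{b_5}(0)=c^{\beta}y^2w^{1-\beta}v^{-\beta}\bigl(2E^2+ER-3ES-\tfrac{3}{16}R^2-\tfrac{3}{8}RS+\tfrac{21}{16}S^2\bigr)$ for $\beta=3/4$, which matches both your expression (after clearing the $y$'s) and the matrix $A_5$ appearing in the paper. Concretely, with $\Lambda:=c^{\beta}w^{1-\beta}v^{-\beta}=(c/t)^{\beta}w$, Cauchy--Schwarz with $\eta_3=1/4,\ \eta_1=1/8,\ \eta_2=1/2$ absorbs the three $R$--cross terms (together with the $8y|e||r|$ from $b_1$) into $-6\Lambda y^2R^2$ and yields
\begin{equation*}
\xi''_{b_1+32b_5}(0)\le 260\,\Lambda e^2+102\,\Lambda y^2S^2\le 260\,\frac{c}{v}d^2+10200\,cv^{-3}x^2s^2,
\end{equation*}
using $\Lambda\le c/v$, $|e|\le|d|$ and $y\le 10x$; these constants sit well below your stated thresholds $2300$ and $18400$, and the case split on $|d-xs/v|$ then closes the argument in both branches. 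The paper takes a different route: rather than reducing $b_1+32b_5$ to $d^2/v$ and $x^2s^2/v^3$ terms and recycling the dichotomy from Lemma~\ref{Domain1}, it first combines $b_3+64b_6$ into a single lower bound $\frac{c}{t}y^2w\bigl(E^2+\tfrac{2}{100}S^2\bigr)$, then dominates $A_5$ by the explicit matrix $\tfrac{1}{16}\operatorname{diag}$-like matrix with entries $192,\ \pm 2,\ 46$, so that the $S^2$ coefficients cancel \emph{exactly} and one is left with a manifestly nonpositive quantity $-4212we^2$. The paper's pairing of $b_3$ with $b_6$ is tighter and avoids the case split, whereas your version is more uniform across the three domains (reusing the near-diagonal/far-diagonal dichotomy), at the cost of generating extra slack that must be tracked numerically. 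Your closing remark correctly identifies the one genuinely delicate point --- $-b_5$ is not concave, so its $R$-- and $S$--directions must actually be absorbed --- and your absorption budget does close with room to spare.
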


\begin{rem} This lemma handles the property 3$^{\circ}$ on the domain $D^3$. Indeed, the additional summands $-b_{2}$ and $-315400b_{3}$ are concave, so they do not affect the concavity.
\end{rem}
\begin{proof} [Proof of Lemma \ref{Domain3}]
We use the same notation for relative changes $D, E, R$ and $S$ as in the proof of the previous lemma. We start with the analysis of the part $b_{3}$. We have that
\begin{align*}
\xi''_{b_{3}}(0)=\frac{2c^2}{{v}}\left( d - \frac{xs}{{v}} \right)^{2}=\frac{2c^{2}x^{2}}{{v}}(D-S)^{2} \geq 
\frac{2cx^{2}}{{v}}(D-S)^{2} = \frac{2cx^{2}{w}}{t}(D-S)^{2}.
\end{align*}
From the part $(c)$ of Lemma \ref{PropertiesQ} and the above estimate we obtain
\begin{align*}
\xi''_{b_{3}+64b_{6}}(0) &\geq 2 \left(\frac{c}{t}x^{2}{w}(D^{2}-2DS+S^{2})+2c{v}^{-3}x^{2}s^{2} \right) \\
&= 
2 \left( \frac{c}{t}x^{2}{w}(D^{2}-2DS+S^{2}) + 2\left(\frac{c}{t} \right)x^{2}{w}S^{2} \right) \\
&=
2\left(\frac{c}{t}\right) x^{2}{w}(D^{2}-2DS+3S^{2}) \\
& \geq
\left(\frac{c}{t}\right)x^{2}{w}(D^{2}+2S^{2}) \\
& \geq \left( \frac{c}{t} \right)y^{2}{w}\left(E^{2}+\frac{2}{100}S^{2} \right),
\end{align*}
hence
\begin{align} \label{PartialIneq}
\xi''_{16^{-1}\cdot23\cdot100(b_{3}+64b_{6})}(0) \geq \left(\frac{c}{t}\right)y^{2}{w}16^{-1}[2300E^{2}+46S^{2}].
\end{align}
Now we turn our attention to the analysis of the part $b_{5}$. We have that
\begin{align*}
\xi''_{b_{5}}(0)=\left(\frac{c}{t}\right)^{\beta}y^{2}{w} \left \langle A_{5}(E,R,S),(E,R,S) \right \rangle,
\end{align*}
where
\begin{align*}
A_{5} = \begin{pmatrix}
  2 & 2(1-\beta) & - 2\beta \\
  2(1-\beta) & \beta(\beta-1) & \beta(\beta-1) \\
  - 2\beta & \beta(\beta-1) & \beta(\beta+1)
\end{pmatrix}.
\end{align*}
We check by straightforward calculation that

\begin{align*}
\begin{pmatrix}
  2 & 2(1-\beta) & - 2\beta \\
  2(1-\beta) & \beta(\beta-1) & \beta(\beta-1) \\
  - 2\beta & \beta(\beta-1) & \beta(\beta+1)
\end{pmatrix} \leq
16^{-1}
\begin{pmatrix}
  192 & \pm 2 & 0 \\
  \pm 2 & 0 & 0 \\
  0 & 0 & 46
\end{pmatrix}.
\end{align*}
So
\begin{align*}
\xi''_{b_{5}}(0) \leq \left(\frac{c}{t}\right)^{\beta}y^{2}{w} 16^{-1}(192E^{2}-4|E||R|+46S^{2})
\end{align*}
and, from (\ref{PartialIneq}),
\begin{align*}
\xi''_{b_{5}-16^{-1}\cdot23\cdot100(b_{3}+64b_{6})}(0) &\leq \left(\frac{c}{t}\right)^{\beta}y^{2}{w} 16^{-1}(-2108E^{2}-4|E||R|) \\
&\leq y^{2}{w}16^{-1}(-2108E^{2}-4|E||R|).
\end{align*}
Now, from the above estimate an part $(b)$ of Lemma \ref{PropertiesQ}, we have that
\begin{align*}
\xi''_{b_{1}+32b_{5}-4600b_{3}-294400b_{6}} \leq -4212{w}e^{2}\leq0,
\end{align*}
which concludes the proof.
\end{proof}

\def\irrelevant{
The final lemma is a discrete-time concavity condition on $B$.

\begin{lemma}
Consider an arbitrary line segment with endpoints $\mathtt{P}=(x,y,{w},{v})$, $\mathtt{Q}=(x+h,y+k,{w}+r,{v}+s)$, $|k|\leq |h|$, which is entirely contained in $D_c$. For such a segment, we have
\begin{equation}\label{conv}
 B(\mathtt{Q})\leq B(\mathtt{P})+B_x(\mathtt{P})(Q_x-P_x)+B_y(\mathtt{P})(Q_y-P_y)+B_{w}(\mathtt{P})(Q_{w}-P_{w})+
B_{v}(\mathtt{P})(Q_{v}-P_{v}).
\end{equation}
Here if $\mathtt{P}\in D^1\cap D^2$, the partial derivatives are understood to be the appropriate derivatives of $B_1$; if $\mathtt{P}\in D^2\cap D^3$, the derivatives come from $B_2$. 
\end{lemma}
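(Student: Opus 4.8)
The plan is to obtain \eqref{conv} as a soft consequence of the concavity property 3$^\circ$ — established along admissible directions in Lemmas \ref{Domain1}, \ref{Domain2} and \ref{Domain3} — together with the ordering of the three branches ($B_1\le B_2$ on $D^1$, $B_2\le\min(B_1,B_3)$ on $D^2$, $B_3\le B_2$ on $D^3$) proved in the first lemma of this section. Throughout, write $\langle\nabla B_i(\mathtt{P}),\mathtt{Q}-\mathtt{P}\rangle$ for the right-hand side of \eqref{conv} with all partial derivatives taken from $B_i$. Consider $\gamma(t)=B(\mathtt{P}+t(\mathtt{Q}-\mathtt{P}))$ for $t\in[0,1]$. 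Because the segment lies in $\mathcal{D}_c$ and the increment $(h,k,r,s)$ satisfies $|k|\le|h|$, property 3$^\circ$ says that $\gamma$ is concave in a neighbourhood of each point of $[0,1]$, hence concave on all of $[0,1]$. The supporting-line estimate for concave functions, $\gamma(1)\le\gamma(0)+\gamma'(0^+)$, then reduces \eqref{conv} to the claim that $\gamma'(0^+)$ is at most the directional derivative of the prescribed branch at $\mathtt{P}$ along $\mathtt{Q}-\mathtt{P}$.

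If $\mathtt{P}$ is interior to some $D^i$, then $\mathtt{P}+t(\mathtt{Q}-\mathtt{P})\in D^i$ for small $t>0$, so $\gamma'(0^+)=\langle\nabla B_i(\mathtt{P}),\mathtt{Q}-\mathtt{P}\rangle$ and there is nothing further to check. Suppose instead $\mathtt{P}$ lies on the common boundary of two adjacent sets; we treat $\mathtt{P}\in D^1\cap D^2$, the case $\mathtt{P}\in D^2\cap D^3$ being identical after relabelling. For small $t>0$ the point $\mathtt{P}+t(\mathtt{Q}-\mathtt{P})$ lies in $D^1$ or in $D^2$. In the former case $\gamma'(0^+)=\langle\nabla B_1(\mathtt{P}),\mathtt{Q}-\mathtt{P}\rangle$, which is exactly the right-hand side of \eqref{conv} here. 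In the latter case $\gamma'(0^+)=\langle\nabla B_2(\mathtt{P}),\mathtt{Q}-\mathtt{P}\rangle$, so it remains to see that $\langle\nabla(B_2-B_1)(\mathtt{P}),\mathtt{Q}-\mathtt{P}\rangle\le 0$. For this, note that $t\mapsto(B_2-B_1)(\mathtt{P}+t(\mathtt{Q}-\mathtt{P}))$ vanishes at $t=0$ (since $B_1=B_2$ on the common boundary) and is $\le 0$ for small $t>0$ (since the point then lies in $D^2$, where $B_2\le B_1$); as $B_2-B_1$ is smooth near $\mathtt{P}$, its right derivative at $0$ — which is the inner product in question — is $\le 0$. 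Hence $\langle\nabla B_2(\mathtt{P}),\mathtt{Q}-\mathtt{P}\rangle\le\langle\nabla B_1(\mathtt{P}),\mathtt{Q}-\mathtt{P}\rangle$, and \eqref{conv} follows. If the segment starts out tangent to the common boundary, then $B_1$ and $B_2$ agree to first order in that direction and either branch works.

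The one point requiring care is the branch bookkeeping in the last step: the branch singled out in the statement must be, in each ambiguous case, the one with the \emph{larger} directional derivative, and this is precisely what the sign pattern of $B_i-B_j$ from the first lemma of the section delivers — which is the reason $B_1$ is used on $D^1\cap D^2$ and $B_2$ on $D^2\cap D^3$. (A minor technical point, handled as in the verification of 3$^\circ$ and using $B_x(0,y,{w},{v})=0$, is that one may assume $x,y\ge 0$, so that the branches are smooth on the relevant region.) Apart from this, the proof is entirely soft: concavity of $\gamma$ is just a rephrasing of 3$^\circ$, and the chord form is the standard tangent-line inequality. I expect no computational difficulty here, since the genuinely hard quadratic-form estimates were already carried out in Lemmas \ref{PropertiesQ}--\ref{Domain3}; this lemma only repackages them into the form required by the discrete-time martingale approximation sketched in Section 2.
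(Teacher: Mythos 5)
Your proposal is correct and follows the same route as the paper's (one-line) proof: deduce concavity of $\gamma(t)=B(\mathtt{P}+t(\mathtt{Q}-\mathtt{P}))$ on $[0,1]$ from $3^\circ$ and then apply the tangent-line inequality $\gamma(1)\le\gamma(0)+\gamma'(0^+)$. You additionally spell out the branch bookkeeping at the interfaces $D^1\cap D^2$ and $D^2\cap D^3$ — namely that $\gamma'(0^+)$ is dominated by the directional derivative of the prescribed branch because of the ordering $B_1\le B_2$ on $D^1$, $B_2\le B_1$ on $D^2$, etc. — a detail the paper's proof leaves implicit.
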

\begin{proof}
The condition 3$^\circ$ implies that the function $G(t)=B(x+th,y+tk,{w}+tr,{v}+ts)$ is concave on $[0,1]$. Consequently, we have $G(1)\leq G(0)+G'(0+)$, and this immediately gives \eqref{conv}.
\end{proof}

\section{Proof of \eqref{MainTheoremL2}}

We will start with the following useful interpretation of $A_{2}$ weights. Let $w$ be such a weight and suppose that $c= [w]_{A_2}$. Let $v=(v_n)_{n \geq 0}$ be the martingale given by $v_n=\mathbb{E}(w^{-1} | \mathcal{F}_{n})_{n\geq 0}$. It follows directly from Jensen's inequality that $w_{n}v_n \geq 1$ almost surely for all $n \geq 0$. Condition $A_{2}$ is equivalent to the reverse bound
\begin{equation*}
w_{n}v_{n} \leq c,  \qquad \text{ with probability 1}.
\end{equation*}
In other words, an $A_{2}$ weight of characteristic equal to $c$ gives rise to a two-dimensional martingale $(w,v)$ taking values in the domain
\begin{equation*}
D_c=\left\{(\texttt{w},\texttt{v}) \in (0, \infty) \times (0,\infty): 1\leq \texttt{wv} \leq c \right\}
\end{equation*}
and terminating at the lower boundary of this set. 

We will need the following geometric lemma.

\begin{lemma}\label{geomlemma}
Assume that $c>1$, $\alpha\in [2^{-d},1-2^{-d}]$ and suppose that points $\mathtt{P}$, $\mathtt{Q}$ and $\mathtt{R}=\alpha \mathtt{P}+(1-\alpha)\mathtt{Q}$ lie in $D_c$. Then the whole line segment $\mathtt{P}\mathtt{Q}$ is contained within $D_{2^dc}$.
\end{lemma}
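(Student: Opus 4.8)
The plan is to parametrise the segment $\mathtt{P}\mathtt{Q}$ explicitly and compute the product $wv$ along it as a quadratic in the parameter, then bound that quadratic using the three membership hypotheses. Write $\mathtt{P}=(w_1,v_1)$, $\mathtt{Q}=(w_2,v_2)$ and, for $t\in[0,1]$, put $\mathtt{P}_t=(1-t)\mathtt{P}+t\mathtt{Q}$. The two coordinates of $\mathtt{P}_t$ are convex combinations of positive numbers, hence positive, so the only thing to check is that
$$
\pi(t):=\big((1-t)w_1+tw_2\big)\big((1-t)v_1+tv_2\big)=(1-t)^2 w_1v_1+t(1-t)(w_1v_2+w_2v_1)+t^2 w_2v_2
$$
lies in $[1,2^dc]$ for every $t\in[0,1]$. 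The lower bound is immediate: from $w_iv_i\ge 1$ and the arithmetic--geometric mean inequality $w_1v_2+w_2v_1\ge 2\sqrt{w_1v_1\,w_2v_2}\ge 2$, we get $\pi(t)\ge (1-t)^2+2t(1-t)+t^2=1$.

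For the upper bound, the key observation is that the only term of $\pi(t)$ not already controlled by $\mathtt{P},\mathtt{Q}\in D_c$ is the mixed term $w_1v_2+w_2v_1$, and the whole role of the hypothesis $\mathtt{R}\in D_c$ is to tame exactly this quantity. Expanding $\mathtt{R}=\alpha\mathtt{P}+(1-\alpha)\mathtt{Q}$ in the same way,
$$
c\ge w_Rv_R=\alpha^2 w_1v_1+\alpha(1-\alpha)(w_1v_2+w_2v_1)+(1-\alpha)^2 w_2v_2\ge \alpha(1-\alpha)(w_1v_2+w_2v_1),
$$
so $w_1v_2+w_2v_1\le c/\big(\alpha(1-\alpha)\big)$. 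Since $\alpha\in[2^{-d},1-2^{-d}]$ (which forces $d\ge 1$), the downward parabola $\alpha\mapsto\alpha(1-\alpha)$ attains its minimum over this interval at an endpoint, where its value is $2^{-d}(1-2^{-d})\ge 2^{-d-1}$; hence $w_1v_2+w_2v_1\le 2^{d+1}c$.

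Feeding this back into the formula for $\pi(t)$, and using $w_iv_i\le c$ together with the elementary bounds $(1-t)^2+t^2\le 1$ and $t(1-t)\le 1/4$ valid on $[0,1]$, we obtain
$$
\pi(t)\le \big((1-t)^2+t^2\big)c+t(1-t)\,2^{d+1}c\le c+2^{d-1}c=(1+2^{d-1})c\le 2^dc,
$$
the last inequality being $1+2^{d-1}\le 2^d$, again valid for $d\ge 1$. Thus $1\le \pi(t)\le 2^dc$ and $\mathtt{P}_t\in D_{2^dc}$ for all $t\in[0,1]$, which is the assertion. I do not expect any genuine obstacle in carrying this out; the only real idea is recognising that $w_1v_2+w_2v_1$ is the single ``dangerous'' quantity along the segment and that $\mathtt{R}\in D_c$, via the positivity of the squared weights $\alpha^2$ and $(1-\alpha)^2$, bounds it by precisely the amount needed.
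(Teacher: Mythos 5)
Your proof is correct, and it takes a genuinely different route from the paper's. You parametrize the segment and expand $\pi(t)=w(t)v(t)$ as a quadratic in $t$, noting that the only term not already controlled by $\mathtt{P},\mathtt{Q}\in D_c$ is the cross term $w_1v_2+w_2v_1$, and that the hypothesis $\mathtt{R}\in D_c$ tames exactly this quantity via $\alpha(1-\alpha)(w_1v_2+w_2v_1)\le w_Rv_R\le c$ together with the lower bound $\alpha(1-\alpha)\ge 2^{-d}(1-2^{-d})\ge 2^{-d-1}$; the final numerical inequality $1+2^{d-1}\le 2^d$ holds for $d\ge 1$, which is also what is needed for the interval $[2^{-d},1-2^{-d}]$ to be nonempty, so no extra hypothesis is smuggled in. The paper's argument is instead geometric: it first reduces, by an informal convexity argument, to the extremal configuration where $\mathtt{P}$ and $\mathtt{R}$ lie on the upper hyperbola $wv=c$ and $\mathtt{Q}$ on the lower hyperbola $wv=1$, observes that then only the subsegment $\mathtt{P}\mathtt{R}$ needs to be controlled, and splits into the two cases $\mathtt{P}_w<\mathtt{R}_w$ and $\mathtt{P}_w\ge\mathtt{R}_w$, in each case trapping $\mathtt{P}\mathtt{R}$ inside an axis-parallel box whose far corner lies on the hyperbola $wv=2^dc$. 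Your algebraic version avoids both the (unjustified) reduction step and the case analysis and is, to my mind, the cleaner and more self-contained proof; what the paper's approach buys is a picture making the role of $\alpha$ geometrically transparent, at the cost of a reduction that is stated but not carried out.
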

\begin{proof}
Using a simple geometrical argument, it is enough to consider the case when the points  $\mathtt{P}$ and $\mathtt{R}$ lie on the curve $\texttt{wv}=c$ (the upper boundary of $D_c$) and $\mathtt{Q}$ lies on the curve $\texttt{wv}=1$ (the lower boundary of $D_c$). Then the line segment $\mathtt{R}\mathtt{Q}$ is contained within $D_c$, and hence also within $D_{2c}$, so  it is enough to ensure that the segment $\mathtt{P}\mathtt{R}$ is contained in $D_{2c}$. Let $\mathtt{P}=(\mathtt{P}_{w},\mathtt{P}_{v})$, $\mathtt{Q}=(\mathtt{Q}_{w},\mathtt{Q}_{v})$ and $\mathtt{R}=(\mathtt{R}_{w},\mathtt{R}_{v})$. We consider two cases. If $\mathtt{P}_{w}<\mathtt{R}_{w}$, then 
$$ \mathtt{P}_{v}=\alpha^{-1}\mathtt{R}_{v}-(\alpha^{-1}-1)\mathtt{Q}_{v}<\alpha^{-1}\mathtt{R}_{v}\leq 2^d\mathtt{R}_v,$$
so the segment $\mathtt{P}\mathtt{R}$ is contained in the quadrant $\{({w},{v}):{w}\leq \mathtt{R}_{w},\,{v}\leq 2^d\mathtt{R}_{v}\}$. Consequently, $\mathtt{P}\mathtt{R}$ lies below the hyperbola $\mathtt{wv}=2^dc$ passing through $(\mathtt{R}_{w},2^d\mathtt{R}_{v})$. This proves the assertion in the case $\mathtt{P}_{w}<\mathtt{R}_{w}$. In the case $\mathtt{P}_{w}\geq\mathtt{R}_{w}$ the reasoning is similar. Indeed, we check easily that the line segment $\mathtt{P}\mathtt{R}$ lies below the hyperbola $\mathtt{wv}=2^dc$ passing through $(2^d\mathtt{R}_{w},\mathtt{R}_{v})$.
\end{proof}

Now we will exploit the properties of $B$ to obtain the proof of \eqref{MainTheoremL2}.

\begin{lemma}
We have the weighted $L^2$ bound
\begin{equation}\label{L2}
\|g\|_{L^2(w)}\leq 1109\cdot 2^d[w]_{A_2}\|f\|_{L^2(w)}.
\end{equation}
\end{lemma}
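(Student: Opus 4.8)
The plan is to run Burkholder's argument in its discrete form: we apply the concavity inequality \eqref{conv} along the martingale $(f_n,g_n,w_n,v_n)$, where $(w_n,v_n)$ is the two-dimensional martingale attached to the $A_2$ weight $w$. The only subtlety is that $D_c=\{1\le \mathtt{wv}\le c\}$ is not convex, so a line segment joining two consecutive values of $(w_n,v_n)$ need not stay inside $D_c$; this is precisely what the geometric Lemma~\ref{geomlemma} is designed to repair, at the cost of replacing $c=[w]_{A_2}$ by $2^d[w]_{A_2}$. This replacement is the source of the factor $2^d$ in \eqref{L2}.

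Concretely, put $c':=2^d[w]_{A_2}>1$ and let $B\colon\R^2\times\mathcal D_{c'}\to\R$ be the special function constructed above, now for the parameter $c'$; thus $B$ satisfies $1^\circ$--$3^\circ$ on $\R^2\times\mathcal D_{c'}$ with majorant $G(x,y,\mathtt{w},\mathtt{v})=\tfrac12\bigl(y^2\mathtt{w}-C^2(c')^2x^2\mathtt{v}^{-1}\bigr)$, $C<1109$. Form $v_n=\E(w^{-1}\mid\mathcal F_n)$, so that $(w_n,v_n)$ takes values in $D_{[w]_{A_2}}\subset D_{c'}$, and set $Z_n=(f_n,g_n,w_n,v_n)$. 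Fix an atom $A$ of $\mathcal F_n$: the value $(w_n,v_n)|_A$ is the barycenter of the finitely many values of $(w_{n+1},v_{n+1})$ on the children of $A$, all of whose weights lie in $[2^{-d},1-2^{-d}]$; organising this barycentric relation into admissible binary splittings and invoking Lemma~\ref{geomlemma} shows that the $(\mathtt{w},\mathtt{v})$-projection of every segment $[Z_n,Z_{n+1}]$ lies in $D_{c'}$. Since moreover $|g_{n+1}-g_n|=|H_{n+1}|\,|f_{n+1}-f_n|\le|f_{n+1}-f_n|$, the inequality \eqref{conv} applies to each of these segments.

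Applying \eqref{conv} at every step and taking expectations kills the first-order terms (martingale differences), so $\E B(Z_n)\le\E B(Z_{n-1})\le\cdots\le\E B(Z_0)$; and $\E B(Z_0)\le0$ by $1^\circ$, because $|g_0|\le|f_0|$ and $(w_0,v_0)\in D_{[w]_{A_2}}\subset D_{c'}$. Combined with the majorization $2^\circ$ this gives, for every $n$,
$$\E\bigl[g_n^2w_n\bigr]\;\le\;C^2(c')^2\,\E\bigl[f_n^2v_n^{-1}\bigr].$$
Now conditional Jensen yields $v_n=\E(w^{-1}\mid\mathcal F_n)\ge\bigl(\E(w\mid\mathcal F_n)\bigr)^{-1}=w_n^{-1}$, hence $v_n^{-1}\le w_n$ and $\E[f_n^2v_n^{-1}]\le\E[f_n^2w_n]$. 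When $f$ (hence $g$) is measurable with respect to $\mathcal F_N$ for some finite $N$ we may take $n=N$ and use the identity $\E[h^2w]=\E[h_N^2\E(w\mid\mathcal F_N)]=\E[h_N^2w_N]$ for such $h$, obtaining $\|g\|_{L^2(w)}^2=\E[g^2w]\le C^2(c')^2\E[f^2w]=C^2(c')^2\|f\|_{L^2(w)}^2$; the general case follows by a routine density argument. Since $C<1109$ and $c'=2^d[w]_{A_2}$, this is exactly \eqref{L2}.

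The main obstacle is the geometric step of the second paragraph: one must decompose the one-step increments of the weight martingale into binary splits whose weights fall in $[2^{-d},1-2^{-d}]$, so that Lemma~\ref{geomlemma} places the corresponding segments in $D_{2^d[w]_{A_2}}$ rather than in some $D_{2^k[w]_{A_2}}$ with a larger exponent $k$; everything after that is bookkeeping. The passage to the limit in $n$ is a secondary point, handled most transparently by first proving the estimate for martingales that stabilise after finitely many steps and then approximating.
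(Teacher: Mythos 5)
Your proposal follows the paper's proof closely: apply the discrete concavity inequality \eqref{conv} along the filtration, use the barycentric structure of the one-step increments of $(w_n,v_n)$ together with the regularity of the filtration and Lemma~\ref{geomlemma} to keep the relevant segments inside $D_{c'}$ with $c'=2^d[w]_{A_2}$, then invoke $1^\circ$ and $2^\circ$ to extract $\E g_n^2 w_n\leq C^2(c')^2\E f_n^2 v_n^{-1}$. The only place you diverge is the final bookkeeping. The paper applies conditional Jensen to the \emph{jointly} convex map $(x,v)\mapsto x^2 v^{-1}$, which gives $\E f_n^2 v_n^{-1}\leq \E f^2 v^{-1}=\E f^2 w$ uniformly in $n$, and then lets $n\to\infty$ via Fatou. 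You instead use the scalar Jensen bound $v_n^{-1}\leq w_n$ and reduce to $\F_N$-measurable $f$, invoking an unspecified ``routine density argument.'' Your route is correct, but the reduction and the density appeal are unnecessary: since $g_n,f_n$ are $\F_n$-measurable and $w_n=\E(w\mid\F_n)$, the tower property gives $\E g_n^2 w_n=\E g_n^2 w$ and $\E f_n^2 w_n=\E f_n^2 w$, so your chain already yields $\E g_n^2 w\leq C^2(c')^2\E f_n^2 w\leq C^2(c')^2\|f\|_{L^2(w)}^2$ for \emph{every} $n$; taking the supremum over $n$, which is exactly the definition of $\|g\|_{L^2(w)}^2$, finishes the proof with no limiting argument at all. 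The paper's joint-convexity observation is slightly slicker because it compares directly with the terminal weight, but the weaker scalar bound you use does suffice.
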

\begin{proof}
Let $w$ be an $A_2$ weight and set $c=2^d[w]_{A_2}$. Furthermore, let $f$, $g$ be two martingales such that $g$ is differentially subordinate to $f$; we may assume that $\|f\|_{L^2(w)}<\infty$, since otherwise there is nothing to prove. Let $B=B_c$ be the function constructed in the previous section. For a fixed $n\geq 0$, we apply \eqref{conv} with $x=f_n$, $y=g_n$, ${w}=w_n$, ${v}=v_n$ and $h=df_{n+1}$, $k=dg_{n+1}$, $r=dw_{n+1}$ and $s=dv_{n+1}$. The application is allowed: indeed, we have $|dg_{n+1}|\leq |df_{n+1}|$ (by the differential subordination), furthermore, the points $(x,y,{w},{v})$ and $(x+h,y+k,{w}+r,{v}+s)$ belong to $\mathcal{D}_{c/2^d}$ and hence the interval which joins them is entirely contained in $\mathcal{D}_c$, by Lemma \ref{geomlemma}. To explain the latter statement, fix an atom $A$ of $\F_n$: the random variable $(w_n,v_n)$ is constant on $A$, denote its value by $\mathtt{P}$. Let $A_1$, $A_2$, $\ldots$, $A_k$ be the collection of all atoms of $\F_{n+1}$ into which the event $A$ is split. For any fixed $j\in \{1,2,\ldots,k\}$, the random variable $(w_{n+1},v_{n+1})$ is constant on $A_j$; denote its value by $\mathtt{Q}$. The crucial observation is that for any $i\neq j$, the random variable $(w_{n+1},v_{n+1})$ restricted to $A_i$ belongs to the convex set $\{(\texttt{w},\texttt{v})\in (0,\infty)^2: \texttt{w}\texttt{v}\geq 1\}$, and hence so does the average
$$ \mathtt{R}=(\mathtt{R}_{w},\mathtt{R}_{v}):=\frac{1}{\mathbb{P}(\Omega\setminus A_j)}\int_{\Omega\setminus A_j} (w_{n+1},v_{n+1})\mbox{d}\mathbb{P}.$$
We know that $\mathtt{P}$ and $\mathtt{Q}$ belong to $D_{c/2^d}$ and we need to show that $\mathtt{P}\mathtt{Q}$ is entirely contained in $D_c$. If $\mathtt{R}\in D_{c/2^d}$, then this follows from Lemma \ref{geomlemma}; the appropriate bound on the number $\alpha$ appearing in the assumption is due to the regularity of the filtration. However, if $\mathtt{R}\notin D_{c/2^d}$, then necessarily $\mathtt{R}_{w}\mathtt{R}_{v}>c/2^d$ (we know that $\mathtt{R}_{w}\mathtt{R}_{v}< 1$ cannot hold) and hence the whole line segment $\mathtt{P}\mathtt{Q}$ must be contained in $D_{c/2^d}$: otherwise, we would have $\mathtt{P}\notin D_{c/2^d}$, by the convexity of the set $\{(\texttt{w},\texttt{v})\in (0,\infty)^2: \texttt{w}\texttt{v}>c/2^d\}$.

Thus we have shown that the use of \eqref{conv} is permitted, and as the result we get
$$ B(f_{n+1},g_{n+1},w_{n+1},v_{n+1})\leq B(f_n,g_n,w_n,v_n)+I,$$
where $\mathbb{E}(I|\F_n)=0$. Indeed, the term $I$ is a linear combination of $df_{n+1}$, $dg_{n+1}$, $dw_{n+1}$ and $dv_{n+1}$ with $\F_n$-measurable coefficients. Consequently, we obtain that the sequence $(\E B(f_n,g_n,w_n,v_n))_{n\geq 0}$ is nonincreasing, so the majorization and the initial condition give
$$ \frac{1}{2}\left(\E g_n^2w_n-1109^2c^2\E f_n^2v_n^{-1}\right)\leq \E B(f_n,g_n,w_n,v_n)\leq \E B(f_0,g_0,w_0,v_0)\leq 0.$$
The function $(x,v)\mapsto x^2v^{-1}$ is convex, so $\E f_n^2 v_n^{-1}\leq \E f^2v^{-1}=\E f^2w$. Furthermore, we have $w_n=\E(w|\F_n)$, sothe above inequality implies
$$ \E g_n^2 w\leq 1109^2c^2 \E f^2w.$$
It remains to let $n\to \infty$ and apply Fatou's lemma to get the claim.
\end{proof}

\begin{lemma} We have the weighted maximal $L^2$-bound
\begin{equation}\label{L2max}
 \|g^*-g\|_{L^2(w)}\leq 1109\cdot 2^d[w]_{A_2}\|f\|_{L^2(w)}.
\end{equation}
\end{lemma}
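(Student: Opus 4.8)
\textbf{Proof strategy for the maximal $L^2$-bound \eqref{L2max}.} The plan is to mimic the scheme of Remark \ref{extension}: rather than reworking everything from scratch, I would lift the four-variable Burkholder function $B$ constructed in Section 3 to a five-variable function $\widetilde B$ on $\{(x,y,z,w,v)\in\R^3\times\mathcal D_c:\ y\le z\}$ by setting $\widetilde B(x,y,z,w,v)=B(x,y,w,v)+\gamma(z-y)^2 w$ for a suitably large (but universal) constant $\gamma$, or a close variant thereof, and then verify the three conditions 1$^\circ$--3$^\circ$ of Remark \ref{extension} with the cost function $G(x,y,z,w,v)=\kappa'\big((z-y)^2 w-C'^2c^2x^2v^{-1}\big)$. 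Once those are checked, Remark \ref{extension} (via It\^o's formula, or the discrete-time analogue plus the geometric Lemma \ref{geomlemma} exactly as in the proof of \eqref{L2}) yields $\E\widetilde B(f_n,g_n,g_n^*,w_n,v_n)$ nonincreasing in $n$, whence the majorization and initial conditions give the bound on $\|g_n^*-g_n\|_{L^2(w_n)}$, and letting $n\to\infty$ with Fatou finishes the argument with the constant $1109\cdot 2^d[w]_{A_2}$ as stated.

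\textbf{Key steps in order.} First, I would record the elementary identity for the increment of the maximal function: along any line segment where $z$ is held fixed (the generic situation, since $g^*$ only increases on a set of times where $g=g^*$), the added term $\gamma(z-y)^2w$ contributes $\gamma\big((z-y)^2\xi_w''(0)-4(z-y)e r+2we^2\big)$ to the second derivative; combined with the fact (visible from the proofs of Lemmas \ref{Domain1}--\ref{Domain3}) that $B$ actually has a \emph{strictly} negative definite reserve in the $e$-direction on each $D^i$ (the final displays read $\le -4212 we^2$, $\le 0$ with slack, etc.), one absorbs the cross term $-4\gamma(z-y)er$ and the $2\gamma we^2$ term into that reserve, provided $\gamma$ is a fixed small multiple of the reserve constant. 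Second, I would check the boundary derivative condition $\widetilde B_z(x,y,y,w,v)\le 0$: this is immediate because $\widetilde B_z=2\gamma(z-y)w$ vanishes on $z=y$. Third, the initial condition $\widetilde B(x,y,y,w,v)=B(x,y,w,v)\le 0$ for $|y|\le|x|$ is inherited verbatim from the Lemma in Section 3, and the majorization $\widetilde B\ge G$ follows from $B\ge \tfrac12(y^2w-C^2c^2x^2v^{-1})$ together with the pointwise inequality $(z-y)^2w\le$ a constant times $(z^2 w + y^2 w)$ and a routine bookkeeping of constants (adjusting $\kappa',C'$). Fourth, I would run the discrete-time telescoping/It\^o argument identically to the proof of \eqref{L2}, using \eqref{conv} (or its five-variable version) and Lemma \ref{geomlemma} to license each one-step move, obtaining $\E(g_n^*-g_n)^2 w_n\le (\text{const})\,c^2\,\E f^2 w$, then $n\to\infty$.

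\textbf{Main obstacle.} The delicate point is the concavity condition 3$^\circ$ for $\widetilde B$ on the \emph{interfaces} between the three domains $D^1,D^2,D^3$: the added term $\gamma(z-y)^2 w$ is smooth, so it does not spoil the ``min of two smooth pieces'' structure, but one must make sure that the slack in the $e$-direction used to absorb the new quadratic terms is available \emph{simultaneously} with the slack already consumed in Lemmas \ref{Domain1}--\ref{Domain3} to dominate the concave correctors $-b_2$, $-b_3$, $-b_6$. Concretely, the worry is the domain $D^2$, where the proof of Lemma \ref{Domain2} ends merely with ``$\le 0$'' rather than with visible negative slack; there I would need to retrace that computation keeping the $-\tfrac1{20}D^2-\tfrac1{40}|D||R|$ margin from \eqref{Eq2Domain2} explicit and verifying it still outweighs the $O(\gamma)$ perturbation after rescaling back from $(D,R,S)$ to $(d,r,s)$. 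I expect this to go through for $\gamma$ a small universal constant (so that $1109\cdot 2^d$ is not increased), but it is the step that requires the most care; everything else is a direct transcription of arguments already in the paper.
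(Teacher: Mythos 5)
Your additive perturbation $\widetilde B(x,y,z,w,v)=B(x,y,w,v)+\gamma(z-y)^2w$ fails the concavity condition 3$^\circ$ of Remark \ref{extension}, and the failure is structural rather than a matter of tuning constants. Differentiating the added term along $t\mapsto(x+td,y+te,z,w+tr,v+ts)$ with $z$ fixed gives
\begin{equation*}
\frac{d^2}{dt^2}\Big|_{t=0}\,\gamma\bigl(z-y-te\bigr)^2(w+tr)=\gamma\bigl(2we^2-4(z-y)\,er\bigr),
\end{equation*}
and the cross term carries a coefficient proportional to $z-y$, which is unbounded on the domain $\{y\le z\}$. Since $\xi_B''(0)$ depends only on $(x,y,w,v,d,e,r,s)$ and not on $z$, choosing $e>0$, $r<0$ (consistent with $|e|\le|d|$) and letting $z-y\to\infty$ makes the perturbation's Hessian contribution arbitrarily large and positive; the ``negative reserve in the $e$-direction'' you wish to draw on from Lemmas \ref{Domain1}--\ref{Domain3} is a finite quantity for fixed data and cannot dominate a term that blows up in $z$. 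No universal $\gamma>0$ rescues this.

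The paper's five-variable function is obtained by composing, not adding. It takes $\mathbb{B}(x,y,z,w,v)=B(x,y-z,w,v)$ (this is precisely the function used in the proof of \eqref{MainTheorem} at the end of Section 3; the globally defined version reads $B(x,y-(y\vee z),w,v)$). For fixed $z$ this is $B$ precomposed with an affine shift in $y$, so along any segment $t\mapsto(x+td,y+te,z,w+tr,v+ts)$ with $|e|\le|d|$ the function is just $B$ evaluated along a segment of the same type, and concavity is inherited verbatim from 3$^\circ$ for $B$ with no new Hessian terms to absorb. The only new check is that the glued function has no convex cusp where $y+te=z$, and that the one-sided $y$-derivatives match there, which follows from the symmetry $B_y(x,0,w,v)=0$; the same identity gives the boundary condition $\mathbb{B}_z(x,y,y,w,v)=-B_y(x,0,w,v)=0\le 0$. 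The majorization is read off directly from that of $B$, namely $\mathbb{B}(x,y,z,w,v)\ge\tfrac{1}{2}\bigl((y-z)^2w-1109^2c^2x^2v^{-1}\bigr)$. Your initial-condition and boundary-derivative checks are fine and your telescoping/Fatou endgame is the paper's; the defect is only in the choice of the five-variable function, which must be the composed one rather than the additively perturbed one.
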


%(\textcolor{red}{I wonder if the following inequality could hold.  Let  $M$ the Hardy-Litlewood max function and $T$ a singular integral, even just the Hilbert transform on $\R$,  $\|M(Tf)-Tf\|_L^2(w)\leq C[w]_{A_2}\|f\|_{L^2(w)}$ where $M$ is the maximal function and $T$ is a singular integral?  This some what reminds me of the Cotlar Lemma}) 
\begin{proof} The argumentation splits naturally into two parts.

\smallskip

\emph{Step 1. An auxiliary special function and its properties.} 
Introduce the function $U:\R^3\times D_{c}\to \R$ of five variables given by
$$ U(x,y,z,{w},{v})=B(x,y-(y\vee z),{w},{v}).$$ 
Consider an arbitrary line segment with endpoints $(x,y,{w},{v})$, $(x+h,y+k,{w}+r,{v}+s)$, which is entirely contained in $D_c$. We will prove that for such a segment, if $y\leq z$ and $\mathtt{P}=(x,y,z,{w},{v})$ and $\mathtt{Q}=(x+h,y+k, z,{w}+r,{v}+s)$, then
\begin{equation}\label{conv2}
 U(\mathtt{Q})\leq U(\mathtt{P})+U_x(\mathtt{P})(Q_x-P_x)+U_y(\mathtt{P})(Q_y-P_y)+U_{w}(\mathtt{P})(Q_{w}-P_{w})+
U_{v}(\mathtt{P})(Q_{v}-P_{v}).
\end{equation}
(The problematic points are handled with as in \eqref{conv}: if $\mathtt{P}$ is such that $(x,y-z,{w},{v})\in D^1\cap D^2$, we compute the appropriate derivatives with the use of the formula for $B$ on the subdomain $D^1$; etc). To show the above inequality, consider the function $ G(t)= U(x+th,y+tk,z,{w}+tr,{v}+ts)$; it is enough to prove that it is concave on the interval $[0,1]$, since this implies $G(1)\leq G(0)+G'(0+)$, which is precisely \eqref{conv2}. If $k=0$, the concavity of $G$ follows at once from 4$^\circ$, so we may assume that $k\neq 0$. By the definition of $U$,
$$ G(t)=\begin{cases}
 B(x+th,y+tk-z,{w}+tr,{v}+ts) & \mbox{if }y+tk\leq z,\\
 B(x+th,0,{w}+tr,{v}+ts) & \mbox{if }y+tk\geq  z
\end{cases}$$
and directly from 4$^\circ$, the function $G$ is concave on each of the intervals $\{t\in [0,1]:y+tk\leq z\}$ and $\{t\in [0,1]:y+tk\geq  z\}$. Furthermore, if $y+t_0k=z$, then
$$ G(t_0-)-G(t_0+)=B_y(x+t_0h,0,{w}+t_0r,{v}+t_0s)k=0.$$
This establishes \eqref{conv2}.

\smallskip

\emph{Step 2. Proof of \eqref{L2max}.} Suppose that $w$ is an $A_2$ weight and let $c=2^d[w]_{A_2}$. Let $f$, $g$ be martingales such that $g$ is differentially subordinate to $f$; again, we may assume that $\|f\|_{L^2(w)}$ is finite. Fix $n\geq 0$ and let us apply \eqref{conv2} with $x=f_n$, $y=g_n$, $z=g_n^*$ (the one-sided maximal function), ${w}=w_n$, ${v}=v_n$ and $h=df_{n+1}$, $k=dg_{n+1}$, $r=dw_{n+1}$ and $s=dv_{n+1}$. Arguing as previously, we check that the application is allowed. Since $U(x,y,z,w,v)=U(x,y,y\vee z,w,v)$, we get
\begin{align*}
 U(f_{n+1},g_{n+1},g_{n+1}^*,w_{n+1},v_{n+1})&=U(f_{n+1},g_{n+1},g_{n}^*,w_{n+1},v_{n+1})\\
&\leq U(f_n,g_n,g_n^*,w_n,v_n)+I,
\end{align*}
where $\E(I|\F_n)=0$. This yields that the  sequence $(\E U(f_n,g_n,g_n^*,w_n,v_n))_{n\geq 0}$ is non-increasing, and hence by 3$^\circ$ and  then 2$^\circ$,
\begin{align*}
\frac{1}{2}\left(\E(g_n-g_n^*)^2w_n-1109^2c^2\E f_n^2v_n^{-1}\right)&\leq \E B(f_n,g_n-g_n^*,w_n,v_n)\\
&= \E U(f_n,g_n,g_n^*,w_n,v_n)\\
&\leq \E U(f_0,g_0,g_0^*,w_0,v_0)=\E B(f_0,0,w_0,v_0)\leq 0.
 \end{align*}
Repeating the arguments from the previous lemma, this implies $ \E (g-g^*)^2\leq 1109^2c^2\E f^2w,$ which is the claim.
\end{proof}

We are ready for the proof of our main estimate.

\begin{proof}[Proof of \eqref{MainTheoremL2}]
Fix a weight $w$ and a pair $(f,g)$ of martingales such that $g$ is differentially subordinate to $f$. We have $ |g|^*\leq |g-g_0|^*+|g_0|\leq (g-g_0)^*+(g_0-g)^*+|g_0|$ 
and hence 
$$ \|\,|g|^*\|_{L^2(w)}\leq \|(g-g_0)^*\|_{L^2(w)}+\|(g_0-g)^*\|_{L^2(w)}+\|g_0\|_{L^2(w)}.$$
But the martingales $g^1=g-g_0$ and $g^2=g_0-g$ are also differentially subordinate to $f$, so by \eqref{L2} and \eqref{L2max},
$$ \|g^{1*}\|_{L^2(w)}\leq \|g^1-g^{1*}\|_{L^2(w)}+\|g^1\|_{L^2(w)}\leq 2218\cdot 2^d [w]_{A_2}\|f\|_{L^2(w)}$$
and similarly for $g^2$. Furthermore, we have 
$$\|g_0\|_{L^2(w)}^2\leq \|f_0\|_{L^2(w)}^2=\E f_0^2w_0\leq [w]_{A_2}\E f_0^2v_0^{-1}\leq [w]_{A_2}\E f^2v^{-1}\leq 2^d[w]_{A_2}\|f\|_{L^2(w)}^2$$
(in the third passage we have used the estimate $w_0v_0\leq [w]_{A_2}$, in the fourth we exploited the convexity of the function $(x,v)\mapsto x^2v^{-1}$ and the last is due to $v^{-1}=w$). 
Putting all the above facts together, we get $ \|\,|g|^*\|_{L^2(w)}\leq 4437\cdot 2^d[w]_{A_2}\|f\|_{L^2(w)}$, which is the desired assertion.
\end{proof}

\begin{remark} As is well known by now, both the classical Hardy-Littlewood maximal functions $M$ and classical Calder\'on-Zygmund operators $T$ have the sharp $L^2(w)$ linear dependance on the $A_2$-characteristic. See for example \cite{Buc},  \cite{Hyt}, and the many references sited in these papers.   That is, $\|Mf\|_{L^2(w)}\leq C_1[w]_{A_2}\|f\|_{L^2(w)}$ and $\|Tf\|_{L^2(w)}\leq C_2[w]_{A_2}\|f\|_{L^2(w)}$, with $C_1$ and $C_2$ independent of $w$.  A natural conjecture from our results above is that the following inequality should hold.  
\begin{equation}\label{conj} 
\|M(Tf)-Tf\|_{L^2(w)}\leq C[w]_{A_2}\|f\|_{L^2(w)}, 
\end{equation}
with $C$ is independent of $w$.  
\end{remark} 

%\textcolor{red}{Adam:  Let me know what you think of this formulation.  What about the following (even the martingale version):  Could there be a commutator inequality of the form
%$$
%\|M(Tf)-M(Tf)\|_{L^2(w)}\leq C[w]_{A_2}\|f\|_{L^2(w)}
%$$
%There are many weighted norm commutator results with BMO functions in the literature.  But I could not locate any of the this form. Finally, I attached a paper of Lerner which goes, in some ways, in these directions. As I recall, you may have some sharp martingale versions of them.  
%}

\section{A weighted inequality for one-dimensional singular integrals}
As an application of the above results, we will establish the corresponding bounds for maximal singular integrals in dimension one. Throughout this section we assume that $K:(-\infty,0)\cup(0,\infty)\to \R$ is an odd, twice differentiable function (in the sense that $K'$ is absolutely continuous) which satisfies
\begin{equation}\label{cond1}
 \lim_{x\to \infty} K(x)=\lim_{x\to \infty}K'(x)=0
\end{equation}
and
\begin{equation}\label{cond2}
 x^3K''(x)\in L^\infty(\R).
\end{equation}
We denote by $T_K$ the associated one-dimensional singular integral operator, defined by
$$ T_Kf(x)=\operatorname{p.v.}\int_\R f(x-y)K(y)\mbox{d}y.$$
Actually, we will be mostly interested in the related maximal counterpart, given by $T_K^*f(x)=\sup_{\e>0} |T_K^\e f(x)|$, where $T_K^\e f(x)$ is the truncation at level $\e$:
$$ T_K^\e f(x)=\int_{|y|>\e} f(x-y)K(y)\mbox{d}y.$$

As shown by Vagharshakyan \cite{Va}, the operator $T_K$ can be expressed as an average of appropriate one-dimensional dyadic shifts. To recall the necessary definitions, let $G$, $H:\R\to \R$ be two functions  supported on the unit interval $[0,1]$ and given by the formulas
$$ G(x)=\begin{cases}
-1 & \mbox{if }0\leq x<1/4,\\
1 & \mbox{if }1/4\leq x< 3/4,\\
-1 & \mbox{if }3/4\leq x\leq 1
\end{cases}
\qquad \mbox{and}\qquad  H(x)=\begin{cases}
7 &\mbox{if }0<x<1/4,\\
-1 & \mbox{if }1/4\leq x< 1/2,\\
1 & \mbox{if }1/2\leq x<3/4,\\
-7 & \mbox{if }3/4\leq x\leq 1.
\end{cases}$$
For any function $f:\R\to \R$ and any interval $I=[a,b]$, we define the scaled function $f_I$ by 
$$ f_I(x)=\frac{1}{\sqrt{b-a}}f\left(\frac{x-a}{b-a}\right).$$
For any $\beta=\{\beta_l\}\in \{0,1\}^\mathbb{Z}$ and any $r\in [1,2)$ we define the dyadic grid $\mathbb{D}_{r,\beta}$ to be the following collection of intervals:
$$ \mathbb{D}_{r,\beta}=\left\{ r2^n\left([0,1)+k+\sum_{i<n} 2^{i-n}\beta_i\right)\right\}_{n\in\mathbb{Z},k\in \mathbb{Z}}.$$
We equip $\{0,1\}^\mathbb{Z}$ with the uniform probability measure $\mu$ uniquely determined by the requirement
$$ \mu(\{\beta:(\beta_{i_1},\beta_{i_2},\ldots,\beta_{i_n})=a\})=2^{-n}$$
for any $n$, any sequence $i_1<i_2<\ldots<i_n$ of integers and any $a\in \{0,1\}^n$. 

The aforementioned result of Vagharshakyan asserts the following.

\begin{theorem}
Suppose that the kernel $K$ satisfies \eqref{cond1} and \eqref{cond2}. Then there exists a coefficient function $\gamma:(0,\infty)\to \R$ satisfying
$$ \|\gamma\|_\infty\leq C\|x^2K''(x)\|_\infty$$
such that
\begin{equation}\label{defK}
 K(x-y)=\int_{\{0,1\}^\mathbb{Z}}\int_1^2 \sum_{I\in \mathbb{D}_{r,\beta}} \gamma(|I|)H_I(x)G_I(y)\frac{\mbox{d}r}{r}\mbox{d}\mu(\beta)
\end{equation}
for all $x\neq y$. Here $C$ is some absolute constant and the series on the right is absolutely convergent almost everywhere.
\end{theorem}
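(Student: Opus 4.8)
The plan is to prove the Vagharshakyan representation \eqref{defK} by reducing the kernel $K$ to a superposition of dilated and translated copies of a single fixed ``wavelet-type'' interaction $H_I(x)G_I(y)$, where the loss of smoothness of $K$ is absorbed into the coefficient $\gamma$ and the condition \eqref{cond2}. First I would observe that, for a fixed interval $I=[a,a+\ell)$ of length $\ell=|I|$, the product $H_I(x)G_I(y)$ is supported on $I\times I$, has mean zero in each variable separately, and is of the normalized form $\ell^{-1}h\big(\tfrac{x-a}{\ell}\big)g\big(\tfrac{y-a}{\ell}\big)$ for the fixed step functions $g=G$, $h=H$; the integral over $r\in[1,2)$ with the measure $\mathrm{d}r/r$ together with the integral over $\beta$ against $\mu$ produces, after unwinding the definition of $\mathbb{D}_{r,\beta}$, an average over all dyadic-type grids, which is the standard device for converting a sum over a fixed grid into a translation-and-dilation invariant object. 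The upshot of this averaging is a clean identity: for a suitable normalization,
\begin{equation*}
\int_{\{0,1\}^{\mathbb Z}}\int_1^2\sum_{I\in\mathbb D_{r,\beta}}\gamma(|I|)H_I(x)G_I(y)\,\frac{\mathrm dr}{r}\,\mathrm d\mu(\beta)=\int_0^\infty \gamma(\ell)\,\Phi\!\left(\frac{x-y}{\ell}\right)\frac{\mathrm d\ell}{\ell},
\end{equation*}
where $\Phi$ is an explicit, compactly supported, bounded function built from $G$ and $H$ (a convolution-type average of $h$ against $g$ across all relative positions). The point of choosing $G$ and $H$ as above — in particular the specific values $7,-1,1,-7$ — is precisely to make $\Phi$ nondegenerate and to give us enough freedom to match a general $K$; one checks that $\Phi$ is supported in $(-1,1)$, odd, and that $\int \Phi(u)u^{-1}\,\mathrm du$-type moment conditions that could obstruct the inversion are satisfied.

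With this reduction in hand, the problem becomes: given $K$ odd with \eqref{cond1}–\eqref{cond2}, solve the ``multiplicative convolution'' equation $K(s)=\int_0^\infty \gamma(\ell)\Phi(s/\ell)\,\mathrm d\ell/\ell$ for $s>0$ with $\|\gamma\|_\infty \lesssim \|x^2K''(x)\|_\infty$. The natural step here is to pass to logarithmic coordinates $s=e^u$, $\ell=e^t$, turning the equation into an ordinary convolution $\widetilde K(u)=(\gamma\!\ast\!\widetilde\Phi)(u)$ with $\widetilde K(u)=K(e^u)$ and $\widetilde\Phi(u)=\Phi(e^u)$. Since $\widetilde\Phi$ has compact support (because $\Phi$ vanishes outside $(-1,1)$, $\widetilde\Phi$ is supported in $(-\infty,0)$, in fact effectively on a bounded set after accounting for the scaling range) and $\widehat{\widetilde\Phi}$ does not vanish — this is where the explicit form of $H$, $G$ matters — one can define $\gamma$ via the inverse of this convolution operator, e.g. by a Neumann series or by an explicit finite-difference formula exploiting that $\Phi$ is piecewise constant. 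Concretely, because $G$ and $H$ are step functions on dyadic subintervals, $\widetilde\Phi$ is itself piecewise constant, and the deconvolution reduces to solving a linear recursion; the bound $\|\gamma\|_\infty\lesssim\|x^2K''\|_\infty$ then comes from expressing $\gamma(\ell)$ as a second difference of $K$ at scale $\ell$, namely $\gamma(\ell)\approx \ell^2 K''(\xi)$ for some $\xi\asymp\ell$ up to bounded factors, and invoking \eqref{cond2}. The decay conditions \eqref{cond1} guarantee that the ``boundary terms'' at $\ell\to0$ and $\ell\to\infty$ in this telescoping vanish, so that the representation holds for every $x\neq y$ and not merely modulo a polynomial.

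The last step is the convergence claim: one must check that $\sum_{I\in\mathbb D_{r,\beta}}\gamma(|I|)H_I(x)G_I(y)$ converges absolutely for a.e.\ $(x,y,r,\beta)$ and that Fubini is legitimate for interchanging the sum with the two integrals. For fixed $x\neq y$, at each scale $2^n$ only boundedly many intervals $I\in\mathbb D_{r,\beta}$ of that length contain both $x$ and $y$ in the union of their supports (in fact the supports $I\times I$ force $|I|\gtrsim |x-y|$ for a nonzero contribution, and for $|I|$ comparable to $|x-y|$ there are $O(1)$ relevant intervals while for $|I|\gg|x-y|$ the contribution is controlled by $\|\gamma\|_\infty |I|^{-1}$), so the tail is dominated by a convergent geometric series $\sum_{2^n\gtrsim|x-y|}\|\gamma\|_\infty 2^{-n}\lesssim \|\gamma\|_\infty |x-y|^{-1}$, uniformly in $\beta$ and $r$; this simultaneously proves absolute convergence and furnishes the integrable majorant needed for Fubini. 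I expect the main obstacle to be the deconvolution step — verifying that the explicit $\Phi$ arising from this particular pair $(G,H)$ is genuinely invertible on the relevant function class with the claimed operator norm, i.e.\ controlling $\gamma$ by $\|x^2K''\|_\infty$ rather than by a weaker norm of $K$. The rest (the averaging identity and the convergence/Fubini bookkeeping) is standard once the bookkeeping of the grids $\mathbb D_{r,\beta}$ is set up carefully.
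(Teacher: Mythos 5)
This theorem is quoted from Vagharshakyan's paper \cite{Va}; the present paper does not prove it (it explicitly says ``The aforementioned result of Vagharshakyan asserts the following''), so there is no in-paper proof to compare against. Your sketch does, however, follow the strategy that Vagharshakyan himself uses: average the Haar shift kernels over the random dyadic grids $\mathbb D_{r,\beta}$ to obtain, at each scale $\ell$, a function of $x-y$ only (a cross-correlation $\Phi$ of $H$ and $G$), reduce the representation to a multiplicative-convolution equation in the scale variable, and deconvolve in logarithmic coordinates, with \eqref{cond1}--\eqref{cond2} supplying the needed decay and regularity.

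That said, several of the concrete claims in your sketch are wrong, and they land precisely on the step you yourself flag as the crux. First, the normalization of the averaging identity is off: since $H_I(x)G_I(y)=\ell^{-1}H\bigl((x-a)/\ell\bigr)G\bigl((y-a)/\ell\bigr)$, averaging over positions $a$ at scale $\ell$ produces $\ell^{-1}\Phi\bigl((x-y)/\ell\bigr)$, and with the $\mathrm{d}r/r$ integration one obtains $\int_0^\infty \gamma(\ell)\,\Phi\bigl((x-y)/\ell\bigr)\,\ell^{-2}\,\mathrm{d}\ell$ (up to a constant), not the $\ell^{-1}\,\mathrm{d}\ell$ you wrote. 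Second, and more importantly, $\Phi$ is the cross-correlation of two step functions and hence is continuous and \emph{piecewise linear}, not piecewise constant; consequently the claimed ``finite-difference formula exploiting that $\Phi$ is piecewise constant'' has no footing. Third, $\widetilde\Phi(u)=\Phi(e^u)$ is not compactly supported: it is supported on all of $(-\infty,0]$, merely decaying exponentially as $u\to-\infty$ because $\Phi(0)=0$. These are not cosmetic: the whole point of the specific values $7,-1,1,-7$ in $H$ is to make the resulting $\Phi$ deconvolvable with a uniformly bounded $\gamma$ controlled by $\|x^2K''\|_\infty$, and your argument does not actually carry out that inversion or verify the non-degeneracy that it requires. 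The convergence/Fubini bookkeeping in your last paragraph is essentially fine (at each scale $\gtrsim|x-y|$ only one grid interval contributes, giving a geometric tail $\lesssim\|\gamma\|_\infty|x-y|^{-1}$), but it presupposes the boundedness of $\gamma$, which is exactly what the unresolved deconvolution step is supposed to deliver.
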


In other words, we see that the singular integral $T_K$ can be expressed as an average of the Haar shift operators
$$ T_{r,\beta}f=\sum_{I\in \mathbb{D}_{r,\beta}} \gamma(|I|)\langle f,G_I\rangle H_I(x).$$
%It will be convenient for us to split each such operator into its odd and even part:
%$$ T_{r,\beta}^{\text{odd}}f=\sum \gamma(|I|)\langle f,G_I\rangle H_I(x), \qquad T_{r,\beta}^{\text{even}}f=\sum \gamma(|I|)\langle f,G_I\rangle H_I(x),$$
%in which the summation runs over all $I$ in $\mathbb{D}_{r,\beta}$ with $\log_2|I|$ odd or even, respectively. 
We will now study the relation between such operators and differentially subordinate martingales. For any dyadic subinterval of $[0,1)$, let $h_I$ be the associated normalized Haar function given by $h_I=|I|^{-1/2}(\chi_{I_-}-\chi_{I_+})$, where $I_-$ and $I_+$ stand for the left and the right half of $I$. Note that $G_I=2^{-1/2}(-h_{I_-}+h_{I_+})$. 

\begin{lemma}\label{lemmam}
For any $a_1,\,a_2,\,a_3\in \R$, let $f=a_1h_{[0,1)}+a_2h_{[0,1/2)}+a_3h_{[1/2,1)}$ and $\displaystyle g=(-a_2+a_3)\cdot 2^{-1/2}H$. Then there is a filtration $(\F_0,\F_1,\F_2)$ of the probability space $([0,1),\mathcal{B}(0,1),|\cdot|)$ with $\F_0=\{[0,1),\emptyset\}$ and $\F_2=\sigma([0,1/4),[1/4,1/2),[1/2,3/4))$, such that the martingale $(g_n)_{n=0}^2$ associated with $g$ is differentially subordinate to the martingale $(56f_n)_{n=0}^2$ associated with $56f$.
\end{lemma}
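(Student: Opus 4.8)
The plan is to exploit that $f$ and $g$ are both $\F_2$-measurable (the functions $h_{[0,1)},h_{[0,1/2)},h_{[1/2,1)}$ and $H$ are constant on the four atoms of $\F_2$) and have mean zero. Consequently, for \emph{any} filtration $\F_0\subseteq\F_1\subseteq\F_2$ with the prescribed $\F_0$ and $\F_2$, the martingale of $56f$ equals $\big(0,\,56\,\E(f\mid\F_1),\,56f\big)$ and that of $g$ equals $\big(0,\,\E(g\mid\F_1),\,g\big)$, so differential subordination of $(g_n)$ to $(56f_n)$ is equivalent to the two pointwise bounds
$$ \big|\E(g\mid\F_1)\big|\le 56\,\big|\E(f\mid\F_1)\big| \quad\text{and}\quad \big|g-\E(g\mid\F_1)\big|\le 56\,\big|f-\E(f\mid\F_1)\big|. $$
The only freedom is $\F_1$, and since $\F_1\subseteq\F_2$ this amounts to choosing a partition $\mathcal P$ of the four atoms $Q_1=[0,1/4)$, $Q_2=[1/4,1/2)$, $Q_3=[1/2,3/4)$, $Q_4=[3/4,1)$ of $\F_2$; on a block $A\in\mathcal P$ the conditional expectation of a function is the equal-weight average of its values over the $Q_i\subseteq A$.

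First I would record the quarter-values. With $b=2^{-1/2}(a_3-a_2)$ we have $g=bH$, so $g$ equals $7b,\,-b,\,b,\,-7b$ on $Q_1,\dots,Q_4$; and setting $P=a_1+2^{-1/2}(a_2+a_3)$, $N=a_1-2^{-1/2}(a_2+a_3)$ (an invertible change of variables, so $(P,N,b)$ ranges over all of $\R^3$), a short computation gives that $f$ equals $P-b,\ b+N,\ b-N,\ -P-b$ on $Q_1,\dots,Q_4$ respectively. For each of four candidate partitions I would then read off from the two bounds above exactly when it works:
\begin{itemize}
\item $\mathcal P_1=\{\{Q_1,Q_4\},\{Q_2,Q_3\}\}$: the $g$-averages over both blocks are $0$, so the first bound is automatic, and the conditions are $7|b|\le 56|P|$, $|b|\le 56|N|$;
\item $\mathcal P_2=\{\{Q_1,Q_4\},\{Q_2\},\{Q_3\}\}$: the conditions are $7|b|\le 56|P|$, $|b|\le 56|N-b|$, $|b|\le 56|N+b|$;
\item $\mathcal P_3=\{\{Q_1\},\{Q_4\},\{Q_2,Q_3\}\}$: the conditions are $7|b|\le 56|P-b|$, $7|b|\le 56|P+b|$, $|b|\le 56|N|$;
\item $\mathcal P_4=\{\{Q_1\},\{Q_2\},\{Q_3\},\{Q_4\}\}$ (i.e.\ $\F_1=\F_2$): the second bound is vacuous and the first is $|g|\le 56|f|$ pointwise, i.e.\ $7|b|\le 56|P-b|$, $7|b|\le 56|P+b|$, $|b|\le 56|N-b|$, $|b|\le 56|N+b|$.
\end{itemize}

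Finally I would close with an elementary covering observation. If $b=0$ then $g\equiv0$ and any filtration works, so let $b\ne0$ and $\beta=|b|$. Since $7|b|\le 56|P|\iff|P|\ge\tfrac{\beta}{8}$ and $|b|\le 56|N|\iff|N|\ge\tfrac{\beta}{56}$ (and similarly for the shifted quantities), the conditions for $\mathcal P_1,\mathcal P_2,\mathcal P_3,\mathcal P_4$ are, respectively, (P1)$\wedge$(N1), (P1)$\wedge$(N2), (P2)$\wedge$(N1), (P2)$\wedge$(N2), where
$$ \text{(P1)}\colon |P|\ge\tfrac{\beta}{8};\quad \text{(P2)}\colon \min(|P-b|,|P+b|)\ge\tfrac{\beta}{8};\quad \text{(N1)}\colon |N|\ge\tfrac{\beta}{56};\quad \text{(N2)}\colon \min(|N-b|,|N+b|)\ge\tfrac{\beta}{56}. $$
Now the three intervals $\{|P|<\beta/8\}$, $\{|P-b|<\beta/8\}$, $\{|P+b|<\beta/8\}$ are centred at $0,\pm b$ with half-length $\beta/8<\beta/2$, hence are pairwise disjoint, so (P1) or (P2) must hold; the same argument (with $\beta/56$) shows (N1) or (N2) must hold. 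Therefore at least one of $\mathcal P_1,\dots,\mathcal P_4$ satisfies all of its conditions, and the corresponding $\F_1$ is the required filtration.

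The main obstacle is purely the bookkeeping in the middle step — computing $\E(f\mid\F_1)$, $\E(g\mid\F_1)$ and the two differences on each atom for each of $\mathcal P_1,\dots,\mathcal P_4$ and matching them against the two bounds. No single inequality is delicate: the constant $56=8\cdot7$ is deliberately generous, the factor $7$ coming from $\|H\|_\infty$ and the factor $8$ from the disjointness margin ($\beta/8<\beta/2$) needed in the covering step.
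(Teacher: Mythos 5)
Your proposal is correct, and it takes a genuinely different route from the paper's proof. After rescaling $a_2,a_3$ so that the quarter-values of $f$ on $Q_1,\dots,Q_4$ become $a_1+a_2,\,a_1-a_2,\,-a_1+a_3,\,-a_1-a_3$, the paper branches on a single comparison: if $|f|>|a_3-a_2|/4$ on every quarter it takes $\F_1=\F_2$; otherwise it fixes a ``small'' quarter $C$, quarters $A\in\{Q_1,Q_4\}$ and $B\in\{Q_2,Q_3\}$ on which $|f|$ is maximal over that pair, sets $\F_1=\sigma(A\cup C)$ (so the atoms of $\F_1$ are $A\cup C$ and $B\cup D$), and bounds $|df_1|$, $|df_2|$ from below atom by atom. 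You instead diagonalize through $(P,N,b)$, list the four intermediate $\sigma$-algebras obtained by merging $\{Q_1,Q_4\}$ and/or $\{Q_2,Q_3\}$ (or neither), reduce each candidate to a decoupled pair of inequalities in $(P,b)$ and in $(N,b)$, and close with a pigeonhole: the three intervals of radius $\beta/8$ (resp.\ $\beta/56$) about $0,\pm b$ are pairwise disjoint, so one alternative from the $P$-family and one from the $N$-family always hold, and hence one of the four partitions always works. What this buys is a cleaner, fully exhaustive case analysis in which the constant $56=8\cdot 7$ is transparent; it also sidesteps a step in the paper that looks fragile. The paper obtains $|b_2|\geq\|dg_2\|_\infty/56$ for $df_2$ on $B\cup D$ by ``analogous analysis,'' but unlike on $A\cup C$ (where $|f|$ equals $m$ on $A$ and is small on $C$), nothing forces $f|_B$ and $f|_D$ to be separated. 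Concretely, in the paper's normalization $a_1=a_2=1$, $a_3=0$ gives $f=(2,0,-1,-1)$, the paper's recipe gives $\F_1=\sigma(Q_1\cup Q_2)$, and then $df_2\equiv 0$ on $Q_3\cup Q_4$ while $dg_2\neq 0$ there, so the stated inequality fails for that choice of $\F_1$. Your covering argument is immune to this: in that example $(P,N,b)=(3/2,1/2,-1/2)$, conditions (P1) and (N1) hold, and the partition merging both pairs does the job.
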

\begin{proof}
It will be more convenient for us to replace $a_2$, $a_3$ above with $a_2\cdot 2^{-1/2}$ and $a_3\cdot 2^{-1/2}$. Then the function $f$ takes values in the set $\{a_1+a_2,a_1-a_2,-a_1+a_3,-a_1-a_3\}$. If all elements of this set are bigger in absolute value than $|-a_2+a_3|/4$, then we take $\F_1=\F_2$. For such a choice, we have, with probability $1$, 
$$|df_1|=|f|\geq |-a_2+a_3|/4=\|g\|_\infty/14=\|dg_1\|_\infty/14$$
and $df_2=df_3=dg_2=dg_3=0$, so the differential subordination holds true. Now, suppose that there is a set $C\in \{[0,1/4),[1/4,1/2),[1/2,3/4),[3/4,1)\}$ on which $|f|\leq  |-a_2+a_3|/4$. Let $m=\max\{|a_1+a_2|,|a_1+a_3|\}$, $M=\max\{|a_1-a_2|,|a_1-a_3|\}$. By the triangle inequality,  
\begin{equation}\label{boundmM}
M,m\geq |a_2-a_3|/2.
\end{equation}
Let $A$, $B\in \{[0,1/4),[1/4,1/2)$, $[1/2,3/4),[3/4,1)\} $ be disjoint sets such that $|f|=m$ on $A$ and $|f|=M$ on $B$. By \eqref{boundmM} and the definition of the set $C$, we see that $A$, $B$, $C$ are pairwise disjoint; set $D=[0,1]\setminus (A\cup B\cup C)$ and define $\F_1=\sigma(A\cup C)$. Since $|A\cup C|=1/2$, the random variable $df_1=f_1$ takes values in a set of the form $\{b_0,b_0\}$. Using \eqref{boundmM}, we have
$$ |b_0|\geq \frac{|f||_A-|f||_C}{2}\geq \frac{m-|-a_2+a_3|/4}{2}\geq \frac{|-a_2+a_3|}{8}\geq \frac{\|g\|_\infty}{28}=\frac{\|dg_1\|_\infty}{28}$$
almost surely (in the last passage, we have used the equality $g_0=0$). Next, the variable $df_2$ restricted to $A\cup C$ also takes values in a set of the form $\{-b_1,b_1\}$ (since $|A|=|C|$), and therefore, with probability $1$,
$$ |b_1|\geq |f||_A-|b_0|\geq m-\left(\frac{m+|-a_2+a_3|/4}{2}\right)\geq \frac{\|g\|_\infty}{28}\geq \frac{\|dg_2\|_\infty}{28}.$$
Similarly, $df_2$ restricted to $B\cup D$ takes values in a set of the form $\{-b_2,b_2\}$, and analogous analysis to that above gives $ |b_2|\geq \|dg_2\|_\infty/56$ 
almost surely.
\end{proof}

By the self-similarity of the Haar system, the above lemma gives the following.

\begin{corollary}\label{corolla}
Suppose that $f$ is a function on $[0,1)$ satisfying $\int_0^1 f=0$, let $\mathcal{D}$ be the lattice of dyadic subintervals of $[0,1)$ and let $\gamma=(\gamma_I)_{I\in \mathcal{D}}$ be an arbitrary sequence satisfying $\|\gamma\|_\infty\leq 1$. Assume in addition that either all $\gamma_I$ with $\log_2 |I|$ odd are zero, or all $\gamma_I$ with even $\log_2 |I|$ vanish. Then there is a filtration on $([0,1),\mathcal{B}(0,1),|\cdot|)$ such that the adapted martingale $(g_n)_{n\geq 0}$ generated by $g=T^\gamma(f)=\sum_{I\in \mathcal{D}} \gamma_I \langle f,G_I\rangle H_I$ is differentially subordinate to the martingale $(56f_n)_{n\geq 0}$  generated by $56f$.
\end{corollary}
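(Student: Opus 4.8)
The plan is to bootstrap Corollary~\ref{corolla} from Lemma~\ref{lemmam} by exploiting the self-similarity of the Haar system and the dyadic filtration. First I would reduce to the case treated in the lemma. Since $\int_0^1 f = 0$, the function $f$ is a (possibly infinite) linear combination of Haar functions $h_I$, $I\in\mathcal{D}$. For each dyadic interval $J$, the operator $T^\gamma$ restricted to the ``block'' generated by $J$ and its two children — namely the contribution $\gamma_J\langle f,G_J\rangle H_J$ — is exactly the rescaled and translated copy of the elementary situation in Lemma~\ref{lemmam}, because $G_J = 2^{-1/2}(-h_{J_-}+h_{J_+})$ and $H_J$ lives on the four grandchildren of $J$ with $\int_J H_J = 0$. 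So one wants to assemble the global martingale out of these local pieces in a way that is compatible across scales.

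The key device is the hypothesis that $\gamma_I$ vanishes for all $I$ with $\log_2|I|$ of one fixed parity; without loss of generality say $\gamma_I = 0$ when $\log_2|I|$ is odd, so that only the ``even-scale'' intervals contribute. This decouples the blocks: the supports of $H_J$ for $J$ ranging over the even-scale intervals are such that the construction of the intermediate $\sigma$-algebra from Lemma~\ref{lemmam} — which inserts one extra refinement step $\F_1$ strictly between the natural dyadic levels corresponding to $J$ and to the grandchildren of $J$ — can be carried out simultaneously and consistently for all $J$ of a given scale, since the ``scale-$J$ to grandchildren-of-$J$'' transitions for distinct $J$ of the same generation act on disjoint intervals, and the odd-scale intervals carry no mass of $g$ to interfere. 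Concretely, I would build the filtration level by level: between the dyadic algebra $\sigma(\text{intervals of length }2^{-2k})$ and $\sigma(\text{intervals of length }2^{-2k-2})$, insert the intermediate algebra prescribed by the lemma applied inside each length-$2^{-2k}$ interval, rescaled via $f\mapsto f_J$. The resulting filtration $(\F_n)_{n\ge0}$ on $[0,1)$ is well-defined, and on it $g = T^\gamma f$ generates a martingale whose increments, block by block, satisfy $|dg_n|\le|d(56f)_n|$ pointwise by Lemma~\ref{lemmam} (the constant $56$ being the worst case there); summing disjoint blocks preserves the pointwise differential subordination increment by increment because at each step only one block is ``active'' on any given atom.

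The step I expect to be the main obstacle is the bookkeeping needed to verify that the local filtrations from Lemma~\ref{lemmam}, applied at every even scale and at every interval of that scale, glue into a single nested filtration and that the martingale for $g$ relative to this glued filtration really has $g_n = \E(g\,|\,\F_n)$ with the increments landing exactly in the ``elementary'' configurations to which the lemma applies. In particular one must check that no two blocks compete for the same refinement step — this is precisely where the odd/even parity hypothesis is used — and that the differential subordination constant does not accumulate across scales (it does not, since $|dg_n|\le|d(56f)_n|$ is checked increment-by-increment and the increments at distinct scales are supported at distinct filtration levels). Once this combinatorial alignment is in place, the conclusion is immediate: the martingale generated by $T^\gamma f$ is differentially subordinate to the one generated by $56f$, and the general case (only odd-scale $\gamma_I$ nonzero) is handled identically after shifting the parity.
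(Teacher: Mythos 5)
Your proposal is correct and follows essentially the same route as the paper: build the filtration in pairs, with $\F_{2n}$ the dyadic $\sigma$-algebra at scale $2^{-2n}$ and $\F_{2n+1}$ the intermediate refinement obtained by applying the preceding lemma conditionally on each atom $I$ of $\F_{2n}$ to the rescaled function $\langle f,h_I\rangle h_I+\langle f,h_{I_-}\rangle h_{I_-}+\langle f,h_{I_+}\rangle h_{I_+}$, then verify the differential subordination increment by increment. The gluing concern you flag is handled in the paper by the observation that the lemma's intermediate $\sigma$-algebra inside $I$ has atoms which are unions of dyadic grandchildren of $I$, so it nests properly between $\F_{2n}$ and $\F_{2n+2}$.
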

\begin{proof}[Proof (sketch)] 
We will show the claim for ``even'' shifts only. Let us start with setting $\F_0=\{\emptyset,\Omega\}$. The remaining $\sigma$-algebras of the filtration are constructed in pairs: $\{\F_1,\F_2\}$, $\{\F_3,\F_4\}$, and so on, furthermore, for each $n$ we have the equality $\F_{2n}=\sigma([0,2^{-2n}),[2^{-2n},2\cdot 2^{-2n}),[2\cdot 2^{-2n},3\cdot 2^{-2n}),\ldots,[1-2^{-2n},1))$. To explain the construction of $\F_{2n+1}$ for a given $n\geq 0$, fix an atom $I$ of $\F_{2n}$: $I=[k\cdot 2^{-2n},(k+1)\cdot 2^{-2n})$ for some $0\leq k\ \leq 2^{2n}-1$. Let us apply the previous lemma conditionally on $I$ with the function $ \mathtt{f}^I:=\langle f,h_I\rangle h_I+\langle f,h_{I_-}\rangle h_{I_-}+\langle f,h_{I_+}\rangle h_{I_+}$.  As the result, we get that the interval $I$ can be filtered with the use of three $\sigma$-algebras $\F_0(I)=\{\emptyset,I\}$, $\F_1(I)$, $\F_2(I)$  such that the martingale corresponding to $\mathtt{g}^I=\langle f,G_I\rangle H_I$ is differentially subordinate to the martingale induced by $56\mathtt{f}^I$. The crucial observation is that the atoms of the $\sigma$-algebra $\F_1(I)$ are built from two dyadic quarters (grandchildren) of $I$. Therefore, if we set
$$ \F_{2n+1}=\sigma(\F_1(I):I\mbox{ is an atom of }\F_{2n}),$$
then $\F_0$, $\F_1$, $\F_2$, \ldots are properly ordered (i.e., they form a filtration). To check the differential subordination, note first that we have $df_0=dg_0=0$. Furthermore, if $n$ is a nonnegative integer and $I$ is an atom of $\F_{2n}$, then 
$$|df_{2n+1}|=|d\mathtt{f}^I_1|\geq |d\mathtt{g}^I_1|\geq |dg_{2n+1}|$$
on $I$ (in the last line we use the boundedness assumption on the sequence $\gamma$ appearing in the definition of $T^\gamma f$);  the inequality $|df_{2n+2}|\geq |dg_{2n+2}|$ is proved similarly. 
\end{proof}

%(\textcolor{red}{Adam: I see the argument here.  But at each stage we apply the theorem with dyadic weights. Is it the point that since  $w\in A_p$ (sup over all intervals)  then on any subinterval $w \in dyadic\,A_p$ and the dyadic characteristic is bounded, uniformly, by the $A_p$ characteristic?})
We are ready to establish the main result of this section.

\begin{theorem}
For any $1<p<\infty$ and any kernel $K$ satisfying the above assumptions and any $A_p$ weight $w$ on the real line, we have the estimate
$$ \|T_K^*f\|_{L^p(w)}\leq \tilde{C}_p[w]_{A_p}^{\max\{1/(p-1),1\}}\|f\|_{L^p(w)}.$$
The exponent $\max\{1/(p-1),1\}$ is the best possible for each $p$.
\end{theorem}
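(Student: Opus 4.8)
The strategy is to transfer the martingale estimate \eqref{MainTheoremL2} (in its maximal form) to the operator $T_K^*$ via Vagharshakyan's representation \eqref{defK} and the martingale comparison encoded in Corollary \ref{corolla}. First I would reduce to $p=2$: the extrapolation theorem for $A_p$ weights (as cited in the introduction, e.g. \cite{D,BO2}) shows that it suffices to establish the bound $\|T_K^*f\|_{L^2(w)}\leq \tilde{C}[w]_{A_2}\|f\|_{L^2(w)}$, since the exponent $\max\{1/(p-1),1\}$ is exactly the one produced by extrapolation from the $L^2$ linear bound. (Strictly, extrapolation applies to sublinear operators, and $T_K^*$ is sublinear, so this is legitimate.)

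Next I would handle the maximal truncation. The key point is that each Haar shift $T_{r,\beta}f=\sum_{I\in\mathbb{D}_{r,\beta}}\gamma(|I|)\langle f,G_I\rangle H_I$, when restricted to a single dyadic grid and to shifts supported on levels of a fixed parity, generates (after multiplication by the universal constant $56$) a martingale $g$ differentially subordinate to the martingale generated by $56f$, by Corollary \ref{corolla}. A truncation $T_K^\e$ at level $\e$, when expanded via \eqref{defK}, corresponds to discarding the shift contributions from intervals $I$ with $|I|$ below a threshold comparable to $\e$; after conditioning, this amounts to stopping the martingale $g$ at a (stopping) time, so $|g|^*$ over the truncated operators is dominated by the full one-sided/two-sided maximal function $|g|^*$ of the martingale. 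Consequently $T_K^*f$, pointwise, is controlled by an average (over $r\in[1,2)$ and $\beta\in\{0,1\}^{\mathbb Z}$, and over the two parities) of the maximal functions $|g_{r,\beta}|^*$ of martingales differentially subordinate to $56f$. Applying \eqref{MainTheoremL2} to each such martingale — noting that the relevant $A_2$ characteristic, computed with respect to the filtration adapted to the dyadic grid $\mathbb{D}_{r,\beta}$, is bounded by a universal multiple of the continuous/analytic $[w]_{A_2}$ uniformly in $r,\beta$ — and integrating over $r$, $\beta$ with Minkowski's integral inequality, yields $\|T_K^*f\|_{L^2(w)}\leq C\|\gamma\|_\infty[w]_{A_2}\|f\|_{L^2(w)}\leq C'\|x^2K''(x)\|_\infty[w]_{A_2}\|f\|_{L^2(w)}$, which gives the $p=2$ case with an explicit $\tilde C_2$.

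Finally, sharpness of the exponent $\max\{1/(p-1),1\}$: this follows from the corresponding lower bound for a single, concrete kernel, namely the Hilbert transform $K(x)=1/(\pi x)$ (which satisfies \eqref{cond1}, \eqref{cond2}), using the standard power-weight examples $w(x)=|x|^{a}$ on $\R$ with $a$ close to $p-1$ (resp. $-1$), for which $[w]_{A_p}$ blows up at a known rate and the Hilbert transform is already known to exhibit the matching growth; since $T_K^*f\geq |T_Kf|$ pointwise, the lower bound transfers to $T_K^*$. Alternatively one cites the known sharpness results for the dyadic shifts or the Hilbert transform directly from \cite{Buc,Hyt,PV,Wi}.

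\textbf{Main obstacle.} The delicate step is the passage from the kernel representation to a clean domination of $T_K^*f$ by an average of \emph{maximal} functions of differentially subordinate martingales — in particular, verifying that (i) the truncation at level $\e$ really corresponds to a stopping operation on each martingale $g_{r,\beta}$ (so that taking the supremum over $\e$ becomes taking $|g_{r,\beta}|^*$), and (ii) the parity restriction in Corollary \ref{corolla} can be accommodated by splitting $\gamma$ into its even- and odd-level parts at the cost of only a factor $2$, and (iii) the adapted $A_2$ characteristic of $w$ on the dyadic grid $\mathbb{D}_{r,\beta}$ is uniformly comparable to the analytic $[w]_{A_2}$. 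Each of these is essentially known from the literature on dyadic representation (Vagharshakyan \cite{Va}, and the general sparse/shift machinery), but assembling them with the correct, weight-independent constants — and ensuring the constants in \eqref{MainTheoremL2} propagate without hidden dependence on $w$ — is where the care is required.
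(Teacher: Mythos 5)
Your proposal follows essentially the same route as the paper: use Vagharshakyan's representation to write $T_K^\e f$ as an average over $r,\beta$ of $\eta$-truncated Haar shifts, split each shift by parity and invoke Corollary \ref{corolla} so that each truncated sum is a variable of a martingale differentially subordinate to $56f$ (hence the supremum over $\eta$ is dominated by that martingale's maximal function), apply the weighted maximal martingale estimate, and integrate over $r,\beta$; sharpness is obtained from the non-maximal Hilbert transform since $T_K^*f\geq |T_Kf|$. The only cosmetic differences are that the paper applies the $L^p$ bound \eqref{MainTheorem} directly (rather than first reducing $T_K^*$ to $p=2$ by operator-side extrapolation), uses Jensen rather than Minkowski to move the $L^p$-norm inside the average, and records explicitly the preliminary reduction to $f$ compactly supported with $\int_\R f=0$.
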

\begin{proof}
The optimality of the exponent is clear: it is already the best in the non-maximal case (for the Hilbert transform, corresponding to the choice $K(x)=1/x$). By standard approximation, we may and do assume that $f$ is supported on some interval $J$. Furthermore, we may assume that  $\int_\R f=0$, adding to $f$, if necessary, the ``correction function'' of the form $a\chi_{[M,M+1]}$ and then sending $M$ to infinity. 
Fix $\e>0$ and two real numbers $x$, $y$ satisfying $|x-y|>\e$. Since both $G_I$ and $H_I$ are supported on $I$, we see that $H_I(x)G_I(y)=0$ if $|I|<\e/2$ and we may rewrite the identity \eqref{defK} in the form
$$ K(x-y)=\int_{\{0,1\}^\mathbb{Z}}\int_1^2 \sum_{I\in \mathbb{D}_{r,\beta} \atop |I|\geq \e/2} \gamma(|I|)H_I(x)G_I(y)\frac{\mbox{d}r}{r}\mbox{d}\mu(\beta).$$
Therefore we have
\begin{align*}
 |T^\e f(x)|&=\left|\int_{\{0,1\}^\mathbb{Z}}\int_1^2 \sum_{I\in \mathbb{D}_{r,\beta} \atop |I|\geq \e/2} \gamma(|I|)\langle f,G_I\rangle H_I(x)\frac{\mbox{d}r}{r}\mbox{d}\mu(\beta)\right|\\
 &\leq \int_{\{0,1\}^\mathbb{Z}}\int_1^2 \left|\sum_{I\in \mathbb{D}_{r,\beta} \atop |I|\geq \e/2} \gamma(|I|)\langle f,G_I\rangle H_I(x)\right|\frac{\mbox{d}r}{r}\mbox{d}\mu(\beta)\\
 &\leq \int_{\{0,1\}^\mathbb{Z}}\int_1^2 \sup_{\eta>0}\left|\sum_{I\in \mathbb{D}_{r,\beta} \atop |I|\geq \eta} \gamma(|I|)\langle f,G_I\rangle H_I(x)\right|\frac{\mbox{d}r}{r}\mbox{d}\mu(\beta)
\end{align*}
and hence, by Jensen's inequality,
\begin{equation}\label{maximal}
\begin{split}
 &\|T^*f\|_{L^p(w)}^p\\
&\leq (\ln 2)^{p-1}\int_{\{0,1\}^\mathbb{Z}}\int_1^2 \int_{\R}\sup_{\eta>0}\left|\sum_{I\in \mathbb{D}_{r,\beta} \atop |I|\geq \eta} \gamma(|I|)\langle f,G_I\rangle H_I(x)\right|^pw(x)\mbox{d}x\frac{\mbox{d}r}{r}\mbox{d}\mu(\beta).
\end{split}
\end{equation}
Now fix $r\in [1,2)$ and $\beta\in \{0,1\}^\mathbb{Z}$. Almost surely (with respect to the measure $\mu$), there is an interval in $\mathbb{D}_{r,\beta}$ which entirely contains $J$, the support of $f$. Since $\int_\R f=0$, we see that $\langle f,G_J\rangle=0$ for $J\in \mathbb{D}_{r,\beta}$ of sufficiently large measure; therefore
$$\sum_{I\in \mathbb{D}_{r,\beta} \atop |I|\geq \eta} \gamma(|I|)\langle f,G_I\rangle H_I(x)=\sum_{I\in \mathbb{D}_{r,\beta},\,I\subseteq J_{r,\beta}, \atop |I|\geq \eta} \gamma(|I|)\langle f,G_I\rangle H_I(x)$$
for some $J_{r,\beta}\in \mathbb{D}_{r,\beta}$. Let us split the sum on the right into two sums, corresponding to those $I$, for which $\log_2 (r^{-1}|I|)$ is odd, and those $I$, for which $\log_2(r^{-1}|I|)$ is even. Let us go back to Corollary \ref{corolla} (rescaled, with $[0,1)$  replaced by $J_{r,\beta}$) and the martingales $56f$ and $g$ studied there. The expression
$$ \sum_{I\in \mathbb{D}_{r,\beta},\,I\subseteq J_{r,\beta}, \atop |I|\geq \eta,\log_2(r^{-1}|I|)\text{ odd}} \gamma(|I|)\langle f,G_I\rangle H_I,$$
considered as a function on $J$, is one of the variables of the martingale $g$.  Consequently, the supremum
$$ x\mapsto \sup_{\eta>0} \left|\sum_{I\in \mathbb{D}_{r,\beta},\,I\subseteq J_{r,\beta}, \atop |I|\geq \eta,\log_2(r^{-1}|I|)\text{ odd}} \gamma(|I|)\langle f,G_I\rangle H_I(x)\right|$$
is bounded from above by the maximal function of $g$ and the inequality \eqref{MainTheorem} implies
$$ \int_\R \sup_{\eta>0} \left|\sum_{I\in \mathbb{D}_{r,\beta},\,I\subseteq J_{r,\beta}, \atop |I|\geq \eta,\log_2(r^{-1}|I|)\text{ odd}} \gamma(|I|)\langle f,G_I\rangle H_I(x)\right|^p\mbox{d}x\leq C_p^p[w]_{A_p}^{\max\{p/(p-1),p\}}\|56f\|_{L^p(w)}^p.$$
The same estimate holds true for the `even' sum (where the summands correspond to the intervals $I$ with even $\log_2(r^{-1}|I|)$). Consequently,
$$ \int_\R \sup_{\eta>0} \left|\sum_{I\in \mathbb{D}_{r,\beta},\,I\subseteq J_{r,\beta}, \atop |I|\geq \eta} \gamma(|I|)\langle f,G_I\rangle H_I(x)\right|^p\mbox{d}x\leq C_p^p[w]_{A_p}^{\max\{p/(p-1),p\}}\|112f\|_{L^p(w)}^p.$$
It remains to plug this bound into \eqref{maximal} to get the claim.
\end{proof}
}

We conclude by proving the maximal inequality formulated in the introductory section. 

\begin{proof}[Proof of \eqref{MainTheorem}]
By the extrapolation argument, it is enough to show the estimate for $p=2$ only. Fix $c>1$ and consider the function $\mathbb{B}:\{(x,y,z,w,v)\in \R^3\times \mathcal{D}_c: x\leq z\}\to \R$ given by $\mathbb{B}(x,y,z,w,v)=B(x,y-z,w,v).$ This new object enjoys the properties listed in Remark \ref{extension} above (in 2$^\circ$, it majorizes $\mathbb{G}(x,y,z,w,v)=G(x,y-z,w,v)=\kappa ((y-z)^{2}{w}-C^{2}c^{2}x^{2}{v}^{-1})$); in particular, we have $ \mathbb{B}_z(x,y,y,w,v)=B_y(x,0,w,v)=0$, by the symmetry of $B$. Hence we obtain
$$ \E (Y_t^*-Y_t)^2W_t\leq C^2c^2\E X_t^2V_t^{-1}$$
for any $X,\,Y$ such that $Y$ is a stochastic integral of $X$ and any pair $(W,V)$ associated with an $A_2$ weight of characteristic not exceeding $c$. Letting $t\to \infty$ and using some standard limiting arguments (and the equality $V_\infty^{-1}=W_\infty$), we get
$$ \|Y^*-Y\|_{L^2(W)}\leq C[W]_{A_2}\|X\|_{L^2(W)}.$$ 
It remains to use the fact that the two-sided maximal function $|Y|^*$ satisfies $|Y|^*\leq |Y^*|+|(-Y)^*|$, which implies
$$ \||Y|^*\|_{L^2(W)}\leq \|Y^*\|_{L^2(W)}+\|(-Y)^*\|_{L^2(W)}\leq 4C[W]_{A_2}\|X\|_{L^2(W)}.$$
The proof is complete.
\end{proof}

\end{document}